\def\CC{\mathbb C}
\def\Chat{\hat {\mathbb C}}
\def\EE{\mathbb E} 
\def\HH{\mathbb H}
\def\NN{\mathbb N}
\def\QQ{\mathbb Q}
\def\Qhat {\hat {\mathbb Q}}
\def\RR{\mathbb R}
\def\ZZ{\mathbb Z}
\def\bb{\beta}
\def\a{\alpha}
\def\b{\beta}
\def\d{\delta}
\def\e{\epsilon}
\def\g{\gamma}
\def\q{q}
\def\s{\sigma}
\def\w{\eta}
\def\bu{{\bf u}}
\def\bv{{\bf v}}
\def\bw{{\bf w}}
\def\bx{{\bf x}}
\def\by{{\bf y}}
\def\bz{{\bf z}}
\def\cal{\mathcal}
\def\B{{ \mathcal B}}
\def\C{{ \mathcal C}}
\def\E{{\mathcal E}}
\def\F{{  \mathcal F}}
\def\H{{\cal H}}
\def\M{{\cal M}}
\def\P{{\cal P}}
\def\S{{\mathcal S}}
\def\R{{\mathcal R}}
\def\T{{\cal T}}
\def\W{{\mathcal W}}
   \def\Ax{\mathop{\rm{Ax}}}
    \def\br{\mathop{\rm \bf{br}}}
    \def\ch{\mathop{\rm{cosh}}} 
     \def\sh{\mathop{\rm{sinh}}} 
   \def\Am{\mathop{{ Am}} }
  \def\SL{SL(2,\mathbb C)}
  \def\Tr{\mathop{\rm{Tr}}}
\def\dd{\partial}
\def\square{\hfill${\vcenter{\vbox{\hrule height.4pt \hbox{\vrule width.4pt
height7pt \kern7pt \vrule width.4pt} \hrule height.4pt}}}$}
\def\co{{\colon \thinspace}}
\newtheorem{theorem}{Theorem}[section]
\newtheorem{definition}[theorem]{Definition}
\newtheorem{lemma}[theorem]{Lemma}
\newtheorem{proposition}[theorem]{Proposition}
\newtheorem{corollary}[theorem]{Corollary}
\newtheorem{remark}[theorem]{Remark}
\newtheorem{introthm}{Theorem}
\begin{document}
%%%%%%%%%%%%%%%%%%%%%%%%%%%%%%%%%%%%%%%%%%%%%%%%%%%%%%%%%%%%%%%%%%%%%
\title{Primitive stability and the Bowditch conditions revisited}

 \author{Caroline Series}

\address{\begin{flushleft} \rm {\texttt{C.M.Series@warwick.ac.uk \\http://www.maths.warwick.ac.uk/$\sim$masbb/} }\\ Mathematics Institute, 
 University of Warwick \\
Coventry CV4 7AL, UK
\end{flushleft}}

 \begin{abstract}
The equivalence of two conditions on the primitive elements in an $SL(2,\mathbb C)$ representation of the free group $F_2 = <a,b>$, namely  Minsky's condition of primitive stability and the $BQ$-conditions  introduced by Bowditch and generalised by Tan, Wong  and Zhang, has been proved by Lee and Xu  and independently by the author in arXiv:1901.01396.   This note is a revised version of our original proof, which is greatly simplified by incorporating some of the ideas introduced by Lee and Xu, combined with the language of the Bowditch tree.

 {\bf Keywords: Free group on two generators, Kleinian  group, non-discrete representation, palindromic generator, primitive stable} \\
 
\centering{  \emph{ To Ser Peow Tan on his 60th birthday.} }

\end{abstract}

\ 
 \date{\today}
\maketitle
 
\noindent {\bf MSC classification:}    {30F40 (primary), 57M50 (secondary).}

 \section{Introduction}

 In this note we show the equivalence of two conditions on the primitive elements in an $\SL$ representation $\rho$ of the free group  
$F_2 = <a,b>$ on two generators, which may hold even when the image $ \rho(F_2)$ is not discrete. One is the condition of primitive stability $PS$ introduced by Minsky~\cite{Minsky} and the other is the so-called $BQ$-conditions  introduced by Bowditch~\cite{bow_mar} and generalised by Tan,  Wong  and Zhang~\cite{tan_gen}.
This result was proved in~\cite{LX} and independently in~\cite{serPS}.   This note is a revised version of 
~\cite{serPS}, which  can be greatly simplified by incorporating the elegant estimates and ideas in~\cite{LX}. The reason for writing it is to give a concise presentation using the language of the Bowditch tree developed in~\cite{bow_mar} and~\cite{tan_gen} and used in~\cite{serPS}.  

Both~\cite{LX} and~\cite{serPS}  introduced a third condition which we call the \emph{bounded intersection property} $BIP$, which they showed was implied by but does not imply the other two. We  also explain this condition and prove the implication here.

We begin by explaining these three conditions one by one.
Recall that an element $ u \in F_2$ is called \emph{primitive} if it forms one of a generating pair $(u,v)$ for $F_2$. Let $\P$ denote the set of primitive elements in $F_2$.  It is well known that up to inverse and  conjugacy, the primitive elements are enumerated by the rational numbers $\Qhat = \QQ \cup \infty$, see Section~\ref{farey} for details.

\subsection{The primitive stable condition $PS$}

The notion of primitive stability was introduced by Minsky in~\cite{Minsky} in order to construct an $Out(F_2)$-invariant subset of the $\SL$ character variety $\chi(F_2)$ strictly larger than the set of discrete free representations.

Let $d(P,Q)$ denote the  hyperbolic distance  between points $P, Q$
in hyperbolic $3$-space $\HH^3$. 
Recall that a path $t \mapsto \gamma(t) \subset \HH^3$  for $t \in I$ (where $I$ is a possibly infinite interval in $ \RR$) is called a $(K, \epsilon)$-quasigeodesic if there exist constants $K, \epsilon >0 $ such that 
\begin{equation} \label{qgeod} K^{-1}|s-t| - \epsilon \leq d(\gamma(s), \gamma(t)) \leq K|s-t| + \epsilon  \ \ \mbox{\rm {for all}} \ \  s, t \in I. \end{equation}

For  a  representation $\rho \co F_2 \to \SL$, in general we will denote elements in $ F_2$ by  lower case letters and their images under $\rho$ by the corresponding upper case, thus $X = \rho (x)$ for $x \in F_2$. In particular if $(u,v)$ is a generating pair  for  $F_2$  we write $U = \rho(u), V = \rho(v)$.  

Fix once and for all a basepoint $O \in \HH^3$ and suppose that $w= e_{1} \ldots e_{n} , e_{k} \in  \{u^{\pm}, v^{\pm} \} , k = 1, \ldots, n$ is a cyclically shortest word in the generators $(u,v)$.
The \emph{broken geodesic} $\br _{\rho}(w; (u,v))$  of $w$ with respect to  $(u,v)$  is the infinite path of geodesic segments joining vertices $$\ldots,  \ , E_n^{-1}E_{n-1}^{-1}  E_{n-2}^{-1}O, E_n^{-1}E_{n-1}^{-1}  O, E_n^{-1}O, O, E_1O, E_1 E_2 O,  \ldots,  E_1E_2 \ldots E_n  O, E_1E_2 \ldots E_n E_1 O, \ldots. $$
where $E_i = \rho(e_i)$.   
\begin{definition} \label{definePS}
Let $(u,v)$ be a fixed generating pair for $F_2$. A representation $\rho \co F_2 \to \SL$ is \emph{primitive stable}, denoted $PS$,  if the broken geodesics $\br_{\rho} (w; (u,v))$  for all  words $w= e_{1} \ldots e_{n}  \in \P, e_{k} \in \{ u^{\pm},v^{\pm}\}, k = 1, \ldots, n$,  are  uniformly $(K,\e)$-quasigeodesic for some fixed constants $(K,\e)$.\end{definition}
Notice that this definition is independent of the choice of basepoint $O$ and makes sense since  the change
from 
$\br_{\rho} (w; (u,v))$ to $\br_{\rho} (w; (u',v'))$ for some other generator pair $(u',v')$ 
changes all the constants for all the quasigeodesics uniformly.   

For   $g \in F_2$ write $||g||$  or more precisely $||g||_{u,v}$ for the word length of $g$, that is the shortest representation of $g$ as a  product of generators $(u,v)$.
It is easy to see that for fixed generators, the condition  $PS$ is equivalent to the existence of $K, \epsilon >0 $ such that 
\begin{equation} \label{qgeod1}
K^{-1}||g'|| - \epsilon \leq d(O, \rho(g')O) \leq K||g'|| + \epsilon \end{equation}
for all  finite subwords $g'$ of the infinite reduced word    
$ \ldots e_{1} \ldots e_{ n} \ldots e_{ 1} \ldots e_{ n}\ldots $

Recall that an  irreducible representation $\rho \co F_2 \to \SL$  is determined up to conjugation by the traces of $U = \rho(u),V = \rho(v)$ and $UV= \rho(uv)$ where $(u,v)$ is a generator pair for $F_2$. More generally, if we take the GIT quotient of  all (not necessarily irreducible) representations, then the resulting $SL(2,\CC)$ character variety of $F_2$ can be identified with $\CC^3$ via these traces, see for example \cite{goldman2} and the references therein.   (The only non-elementary (hence reducible) representation occurs when
$\Tr [U,V] = 2$. We  exclude this from the discussion, see for example~\cite{sty} Remark 2.1.)

\begin{proposition}[\cite{Minsky} Lemma 3.2] \label{prop:psopen} The set of primitive stable $\rho \co F_2 \to \SL$ is open  in the $\SL$ character  variety of $F_2$.  \end{proposition}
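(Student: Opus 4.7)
The plan is the standard two-step argument: combine the local-to-global property of quasigeodesics in the $\d$-hyperbolic space $\HH^3$ with continuity on a finite family of short words, exactly in the spirit of Minsky's original proof.

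First, I would invoke the local-to-global principle for quasigeodesics in $\HH^3$: given $(K,\e)$ and the hyperbolicity constant $\d$, there exist an integer $L=L(K,\e,\d)$ and constants $(K',\e')$ such that any bi-infinite sequence of vertices with uniformly bounded consecutive spacing, in which every $L$ consecutive vertices form a $(K,\e)$-quasigeodesic, is automatically a global $(K',\e')$-quasigeodesic. Since $\rho$ is $PS$, the broken geodesic $\br_{\rho}(w;(u,v))$ satisfies (\ref{qgeod1}) with constants $(K,\e)$ for every primitive $w$, and a fortiori the $L$-local version.

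Second, I would reduce verification of $PS$ for a nearby $\rho'$ to a check on finitely many words. Any subword of combinatorial length at most $L$ of a bi-infinite periodic word $\ldots e_1\cdots e_n e_1\cdots e_n\ldots$ coming from a cyclically reduced primitive element is itself a reduced word of length $\leq L$ in $\{u^{\pm},v^{\pm}\}$, and the set $W_L$ of such reduced words is finite. For each $g\in W_L$ both $\|g\|$ and the map $\rho'\mapsto d(O,\rho'(g)O)$ vary continuously on the character variety. If I relax the target constants to, say, $(2K,2\e)$, then the strict inequalities
\[(2K)^{-1}\|g\|-2\e \;<\; d(O,\rho(g)O)\;<\;2K\|g\|+2\e\]
hold at $\rho$ for every $g\in W_L$, and so persist on an open neighbourhood $\U$ of $\rho$. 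The local-to-global step then produces uniform $(K'',\e'')$ quasigeodesic constants for \emph{every} primitive broken geodesic of $\rho'$, giving $\rho'\in PS$.

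The main subtlety I expect is the discrete local-to-global step: the edges of a broken geodesic may be individually long, so one must parametrise $\br_{\rho'}(w;(u,v))$ by word position rather than arc length and apply a version of the local-to-global lemma tailored to vertex sequences with bounded step size. The bounded-step hypothesis $d(O,\rho'(e)O)\leq C$ for $e\in\{u^{\pm},v^{\pm}\}$ is automatic on a small neighbourhood of $\rho$ by continuity, so once the discrete version is in place the rest of the argument reduces cleanly to the finite check on $W_L$.
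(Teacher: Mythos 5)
The paper offers no proof of this proposition, citing it directly from Minsky's Lemma 3.2, and your argument is precisely the one Minsky gives there: local testability of the quasigeodesic property for bounded-step vertex sequences in the $\delta$-hyperbolic space $\HH^3$, plus openness of the resulting finite set of strict inequalities on the character variety. Your proof is correct; the only bookkeeping point is that the threshold $L$ should be chosen for the relaxed constants $(2K,2\e)$ that you actually verify locally (rather than for $(K,\e)$), which is a standard and harmless adjustment.
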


 Minsky showed that not all $PS$ representations are discrete.

\subsection{The Bowditch $BQ$-conditions}

The $BQ$-conditions were introduced by Bowditch in~\cite{bow_mar} in order to give a purely combinatorial proof of McShane's identity.

Again let $(u,v)$ be a generator pair for $F_2$ and  let $\rho \co F_2 \to \SL$. 

\begin{definition} \label{defineBQ}
Following~\cite{tan_gen}, an irreducible  representation $\rho \co F_2 \to \SL$ is said to satisfy the $BQ$-conditions if 
\begin{equation}  \label{eqn:B2}
 \begin{split}  & \Tr \rho(g) \notin [-2,2]  \ \ \forall g \in \P    \ \ \mbox {\rm and}   \ \ \cr
& \{ g \in \P: |\Tr \rho(g)| \leq 2 \} \ \mbox {\rm is finite}.\end{split}\end{equation}
\end{definition}

We denote the set of all representations satisfying the $BQ$-conditions by $\B$.

\begin{proposition}[\cite{bow_mar} Theorem 3.16, \cite{tan_gen} Theorem 3.2] \label{prop:bqopen}  The set    $\B$ is open in the $\SL$ character  variety of $F_2$.    \end{proposition}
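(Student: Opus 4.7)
The plan is to exploit the tree structure on conjugacy classes of primitive elements described in Section~\ref{farey}: generating triples $(u,v,uv)$ correspond to vertices of a trivalent tree $T$ dual to the Farey triangulation, adjacent vertices differ by an elementary Nielsen move, and the traces $x = \Tr U$, $y = \Tr V$, $z = \Tr UV$ satisfy the Markov identity $x^2 + y^2 + z^2 = xyz$. The trace identity $\Tr(UV) + \Tr(UV^{-1}) = \Tr U \,\Tr V$ then gives a simple two-term recursion relating the trace assigned to any primitive and to its neighbours in $T$.

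The main technical ingredient is the standard \emph{Fibonacci growth estimate}: there exists a universal constant $K_0 > 2$ such that at any vertex of $T$ where all three traces have modulus $> K_0$, moving one step in any of the three directions of $T$ strictly increases the maximum modulus, and iterating yields exponential growth in $|\Tr \rho(g)|$ as one moves away from such a vertex. This is a direct consequence of the Markov identity; compare~\cite{bow_mar} and~\cite{tan_gen}.

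Given $\rho_0 \in \B$, I would first use the finiteness clause of $BQ$ to choose a constant $K > K_0$ larger than $|\Tr \rho_0(g)|$ for every primitive $g$ lying in the finite exceptional set for $\rho_0$. Let $T_K(\rho_0) \subset T$ be the subgraph spanned by vertices at which at least one of the three associated traces has modulus $\le K$. Combining the growth estimate with the standard connectedness argument for small-trace regions gives that $T_K(\rho_0)$ is a finite connected subtree, and along each of its finitely many boundary edges all three traces have modulus bounded below by $K_0$, with strict outward growth.

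For openness, continuity of the trace functions on the character variety $\CC^3$ implies that for $\rho$ in a sufficiently small neighbourhood of $\rho_0$, the finitely many values $\Tr \rho(g)$ indexed by primitives appearing in $T_K(\rho_0)$ vary continuously, so each remains off the interval $[-2,2]$ and the set $\{g \in \P : |\Tr \rho(g)| \le 2\}$ remains finite. Moreover, by continuity the traces at the finite set of boundary vertices of $T_K(\rho_0)$ remain of modulus $> K_0$, so the Fibonacci growth estimate still applies uniformly, forcing $|\Tr \rho(g)|$ to be large, and in particular off $[-2,2]$, for every primitive $g$ whose conjugacy class lies outside $T_K(\rho_0)$. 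Both clauses of Definition~\ref{defineBQ} then hold for $\rho$. The main obstacle is arranging uniformity of the Fibonacci estimate under perturbation; this is handled by choosing $K$ strictly larger than the observed trace maxima on $T_K(\rho_0)$ so that the strict inequality at the finitely many boundary vertices survives a small perturbation of $\rho_0$.
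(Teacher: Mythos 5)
The paper does not prove this proposition; it quotes it from \cite{bow_mar} (Theorem 3.16) and \cite{tan_gen} (Theorem 3.2), and your sketch is an attempted reconstruction of exactly that argument: finite attracting subtree, continuity of the finitely many traces met by that subtree, and propagation of growth outside it. The architecture is the standard one, but your declared ``main technical ingredient'' is false, and it is precisely at that point that the real proof has to work harder.

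You claim there is a universal $K_0>2$ such that at any vertex of $\T$ where all three traces have modulus $>K_0$, moving one step in \emph{any} of the three directions strictly increases the maximum modulus. No such $K_0$ exists: with $\mu=0$, the vertex carrying the Markoff triple $(3,6,15)$ has the move $15\mapsto 3\cdot 6-15=3$, dropping the maximum from $15$ to $6$, and vertices further from the sink (e.g.\ $(6,15,87)$, where $87\mapsto 3$) give counterexamples with all three entries arbitrarily large. Indeed Lemma~\ref{forkvertex} only guarantees that such a vertex has \emph{at most one} descending direction, not zero, so ``all three traces large'' does not propagate outward by itself: to conclude $|\hat\bz|=|\hat\bu\hat\bv-\hat\bw|$ is large at the tail of an outward edge one must also control the inward region $\bw$ relative to the product $\hat\bu\hat\bv$, i.e.\ one needs the edge to be oriented inward in Bowditch's sense. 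The correct ingredient is therefore edge-based: the explicit open criterion (\cite{bow_mar} Lemma 3.14, \cite{tan_gen} Lemma 3.23, invoked in the proof of Theorem~\ref{sinktree}) which decides from a finite set of strict inequalities on the traces adjacent to a directed edge $\vec e$ that every edge of $\W_{\E}(\vec e)$ points towards $\vec e$ (Lemma~\ref{wake1}) and that the Fibonacci lower bound of Lemma~\ref{fibonacciwake} holds throughout the wake. Since this criterion consists of strict inequalities at the finitely many boundary edges of the finite subtree, it survives a small perturbation of $\rho_0$, and that --- not a vertex-based monotonicity --- is what forces both BQ clauses for all primitives outside the finite tree. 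Two smaller corrections: in the paper's (general) setting the vertex relation is $x^2+y^2+z^2-xyz=\mu$ with $\mu=\Tr[A,B]+2$, not the Markoff equation $x^2+y^2+z^2=xyz$ (the case $\mu=0$); and the finiteness clause of $BQ$ must be read on extended conjugacy classes, i.e.\ as finiteness of $\Omega_\rho(2)$, since trace is a class function.
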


Bowditch's original work~\cite{bow_mar} was on the case in which  the commutator $[X,Y] = XYX^{-1}Y^{-1}$
is parabolic and $\Tr  [X,Y] =  -2 $.  He conjectured that all  representations in $\B$ of this type are quasifuchsian and hence discrete. While this question remains open, it is shown  in~\cite{sty} that without this restriction, there are definitely  representations in $\B$ which are not discrete.

\subsection{The bounded intersection property BIP}\label{sec:BIP}

Recall that a word $w= e_1e_2 \ldots e_n$ in generators $(u,v)$  of $F_2$ is \emph{palindromic}  if  it reads the same forwards and backwards, that is, if $  e_1e_2 \ldots e_n = e_ne_{n-1} \ldots e_1$. Palindromic words have been studied by 
Gilman and Keen in~\cite{gilmankeen1, gilmankeen2}.

Suppose that $\rho \co F_2 \to \SL$ and let $(u,v)$ be a generating pair. Denote  the extended common perpendicular of the axes of $U = \rho(u), V = \rho(v)$ by   $\E(U,V)$. 
By applying the $\pi$ rotation about $\E(U,V)$, it is not hard to see  that if a word $w$ is palindromic in a generator pair $(u,v)$  then  the axis of $W = \rho(w)$ intersects   $\E(U,V)$ perpendicularly, see for example~\cite{BSeries}. (See~\cite{LX} for an interesting remark on the failure of the converse.)

Fix  generators $(a,b)$ for $F_2$. We call the pairs $(a,b), (a,ab)$ and $(b, ab)$ the  \emph{basic generator pairs}.  
Now given $\rho \co F_2 \to \SL$ let $A = \rho(a), B = \rho(b)$ and consider the  three common perpendiculars $\E(A,B), \E(A,AB)$ and $\E(B,AB)$. 
 (We could equally well  chose  to use $BA$ in place of $AB$; the main point is that the choice is fixed once and for all.) 
We call  these lines the \emph{special hyperelliptic axes}.

\begin{definition} \label{defineBIP} 
Fix a basepoint $O \in \HH^3$. A representation 
$\rho \co F_2 \to \SL$ satisfies the \emph{bounded intersection property} $BIP$  if there exists  $D>0$ so that  
if a generator $w$ is palindromic with respect to one of the three basic generators pairs, then its axis intersects the corresponding special hyperelliptic axis in a point at distance at most $D$ from $O$. Equivalently, the axes of all palindromic primitive elements intersect the appropriate hyperelliptic axes in bounded intervals.
\end{definition}
Clearly this definition is independent of the choices of $(a,b)$ and $O$. 

A similar condition but related to \emph{all} palindromic axes was used in~\cite{gilmankeen2} to give a condition for discreteness of geometrically finite groups.

In Section~\ref{BIP} we show that every generator is conjugate to one which is palindromic with respect to one of the three basic generator pairs. In fact each primitive element can be conjugated (in different ways) to be palindromic with respect to two out of the three possible basic pairs. For a more precise statement see Proposition~\ref{uniquepalindromes}.

\subsection{The main result} 

The main results  of this paper are:
\begin{introthm} \label{introthmA} The conditions $BQ$ and $PS$ are equivalent.  \end{introthm}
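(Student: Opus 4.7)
The plan is to prove the two implications separately.

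For $PS\Rightarrow BQ$, let $g\in\P$ and let $w=e_1\ldots e_n$ be a cyclically shortest representative of its conjugacy class in the generators $(u,v)$. The bi-infinite concatenation $\ldots www\ldots$ has the orbit points $\{\rho(g^k)O\}_{k\in\ZZ}$ as a subsequence of its broken geodesic vertices, so by (\ref{qgeod1}) we get $d(O,\rho(g^k)O)\geq kn/K-\epsilon$. Dividing by $k$ and letting $k\to\infty$ gives $\ell(\rho(g))\geq n/K>0$, where $\ell$ denotes translation length; in particular $\rho(g)$ is loxodromic, so $\Tr\rho(g)\notin[-2,2]$. Using the elementary estimate $|\Tr\rho(g)|\geq 2\sinh(\ell(\rho(g))/2)$ we get $|\Tr\rho(g)|>2$ whenever $n>2K\log(1+\sqrt 2)$, and since only finitely many primitive conjugacy classes have cyclic length below this bound, the second $BQ$ condition holds.

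For the harder direction $BQ\Rightarrow PS$, the natural framework is the Bowditch tree $\T$, whose vertices correspond to generating triples in $F_2$, with the trace values assigned via $\rho$ and the edges carrying the Markov identity $\Tr(XY)+\Tr(XY^{-1})=\Tr(X)\Tr(Y)$. I would split the argument into three steps. \emph{Step 1.} Following the combinatorial flow analysis of Bowditch~\cite{bow_mar} and Tan-Wong-Zhang~\cite{tan_gen}, show that the $BQ$ hypothesis forces the function $v\mapsto|\Tr\rho(v)|$ to attain its minimum on a finite ``core'' subtree of $\T$ and to grow exponentially along any ray leaving the core. \emph{Step 2.} Convert this to geometry: tree distance along a ray is comparable to cyclic word length in $F_2$, so one obtains uniform linear lower bounds $\ell(\rho(w))\geq c\,||w||-C$ on translation length, and $d(O,\rho(w)O)\geq c\,||w||-C'$ on orbit displacement, for every primitive $w$. \emph{Step 3.} Invoke the local-to-global principle for quasigeodesics in $\HH^3$: a piecewise geodesic whose segments have lengths bounded below and whose corners open at angles uniformly bounded away from $\pi$ is automatically globally quasigeodesic, with constants depending only on those bounds and on the hyperbolicity constant $\delta$.

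The hard part is the uniform angle estimate at the corners of $\br_\rho(w;(u,v))$. At the $k$-th corner the opening angle depends on the relative position of the axes of $\rho(e_{k-1})$ and $\rho(e_k)$, and is controlled by the trace of $\rho(e_{k-1}e_k)$; the scalar lower bound on $|\Tr\rho(e_{k-1}e_k)|$ from Step 1 must be promoted to a definite geometric angle bounded away from $\pi$. This is precisely the point at which the sharp hyperbolic-geometric inequalities introduced by Lee-Xu~\cite{LX} enter, relating traces to the locations of, and angles between, axes. The special hyperelliptic axes of Section~\ref{sec:BIP} supply a canonical frame adapted to the three basic generator pairs, so that after the finite combinatorial reduction afforded by the finiteness of the core in $\T$, the angle bound reduces to a uniform estimate depending only on $\rho$. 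Once this is in hand, Step 3 applies and yields $PS$.
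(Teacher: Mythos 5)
Your $PS\Rightarrow BQ$ argument is correct and is essentially the paper's Proposition~\ref{PSimpliesBQ}: the linear lower bound on orbit displacement rules out elliptic and parabolic primitives and forces all but finitely many primitive classes to have large trace. (One small caution: the finiteness conclusion is about conjugacy classes, and your estimate $|\Tr\rho(g)|\geq 2\sinh(\ell(\rho(g))/2)$ is fine since $|\cosh(x+iy)|\geq|\sinh x|$.) The gap is in the hard direction. Your Step 3 rests on a local-to-global principle requiring segments that are long \emph{relative to} the corner-angle bound, but the segments of $\br_\rho(w;(u,v))$ have the fixed lengths $d(O,\rho(u)O)$ and $d(O,\rho(v)O)$, which for a given $BQ$ representation can be arbitrarily short and cannot be improved; moreover nothing in the $BQ$ hypothesis bounds the corner angles at the orbit points away from doubling back. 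You correctly identify this as ``the hard part,'' but the mechanism you gesture at -- promoting a scalar lower bound on $|\Tr\rho(e_{k-1}e_k)|$ to a corner-angle bound, with the hyperelliptic axes as a frame -- is not established and is not what makes the Lee--Xu argument work; the hyperelliptic axes play no role in this direction (they belong to the $BIP$ discussion).

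What actually closes the gap in the paper is different in two respects. First, the combinatorial reduction is used to \emph{change generators}: every primitive word outside a finite set is rewritten as a \emph{positive} word in a generating pair $(u,v)$ adjacent to an edge in the wake of one of finitely many edges leaving the attracting subtree $T_F$ (Theorem~\ref{sinktree}, Proposition~\ref{wordsandtraces}), and for such a pair $\max\{\ell(U),\ell(V)\}$ is large. Second, the geometric input is not an angle at the broken-geodesic corners but the invariance of the amplitude (Proposition~\ref{amplitudeinvt}), which yields $|\sinh\delta_{UV}|\leq ke^{-\ell(U)}$ for the complex distance between $\Ax U$ and $\Ax V$, hence $|\Im\,\delta_{UV}|$ small (Proposition~\ref{angleinequality}); this feeds a nested half-space (ping-pong) argument via the double cone lemma (Corollary~\ref{doubleconelemma}, Proposition~\ref{sephyperplanes}), giving $d(P_n,P_m)\geq|n-m|d$ directly for all positive words in $(u,v)$ (Proposition~\ref{longwordsqgeod}) with no corner-angle analysis at all. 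Also note that your Step 2 as stated only bounds $d(O,\rho(w)O)$ for the primitive word $w$ itself; $PS$ requires the bound for all subwords of $\ldots www\ldots$, which are generally not primitive -- this is exactly why some global mechanism like the nested half-spaces is indispensable. As written, your proof of $BQ\Rightarrow PS$ is an outline with its central step missing.
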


\begin{introthm} \label{introthmB} The conditions $BQ$ and $PS$ are both imply, but are not implied by, the condition $BIP$. \end{introthm}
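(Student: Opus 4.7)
\textbf{Proof proposal for Theorem~\ref{introthmB}.}

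The plan is to prove $PS \Rightarrow BIP$ directly; combined with the equivalence $BQ \iff PS$ of Theorem~\ref{introthmA}, this yields both of the implications asserted in the statement, and the strict failure of the converses is then handled by invoking explicit representations constructed in~\cite{LX} and~\cite{serPS}.

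For the implication, suppose $\rho$ is primitive stable with constants $(K,\e)$, fix a basic generator pair $(u,v)$, and let $w \in \P$ be palindromic with respect to $(u,v)$. By Definition~\ref{definePS}, the bi-infinite broken geodesic $\br_\rho(w;(u,v))$ is a $(K,\e)$-quasigeodesic through the basepoint $O$. Morse stability of quasigeodesics in $\HH^3$ provides a constant $R = R(K,\e)$ for which this broken geodesic lies within Hausdorff distance $R$ of a unique bi-infinite geodesic $\gamma$. Since $W=\rho(w)$ shifts $\br_\rho(w;(u,v))$ by one period and $\gamma$ is unique, $\gamma$ must be $W$-invariant; because $PS$ also forces $d(O,W^n O)$ to grow linearly in $n$, $W$ is loxodromic and $\gamma$ is the axis of $W$. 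In particular, the foot of perpendicular $F$ from $O$ to $\mathrm{axis}(W)$ satisfies $d(O,F) \leq R$. The palindromic hypothesis (recalled in Section~\ref{sec:BIP}) says that the axis of $W$ meets the special hyperelliptic axis $\E(U,V)$ perpendicularly at a point $P$, so $P$ is the nearest-point projection onto $\mathrm{axis}(W)$ of every point of $\E(U,V)$. The $1$-Lipschitz property of this projection together with the triangle inequality then gives
\[
d(O,P) \leq d(O,F) + d(F,P) \leq R + d(O,\E(U,V)),
\]
a constant depending only on $\rho$ and $O$. Taking the maximum over the three basic pairs yields the uniform $D$ of Definition~\ref{defineBIP}.

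For the reverse non-implication, I would exhibit a representation in which some primitive has trace in $[-2,2]$ (so $BQ$ fails) while every palindromic primitive still has its axis pinned to the corresponding hyperelliptic axis within bounded distance of $O$. Such examples appear in~\cite{LX} and~\cite{serPS}; the underlying point is that $BIP$ constrains only the locations of palindromic axes, not the translation lengths along them, so a well-chosen deformation of a $PS$ example can drop a single primitive trace past $\pm 2$ without disturbing the palindromic geometry near $O$. The main obstacle is not the implication, which reduces to careful bookkeeping of the Morse constant, but rather verifying in the non-implication that $BIP$ continues to hold uniformly for all infinitely many palindromic primitives after the perturbation that breaks $BQ$.
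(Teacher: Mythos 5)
Your proof of the implication $PS \Rightarrow BIP$ is correct and is essentially the paper's argument (Proposition~\ref{prop:PSImpliesbounded}): primitive stability makes the broken geodesics uniformly quasigeodesic, hence uniformly close to the corresponding axes, hence all primitive axes pass within bounded distance of $O$; the Morse-stability and nearest-point-projection bookkeeping you supply is a more detailed version of the same idea, and the reduction of $BQ \Rightarrow BIP$ to Theorem~\ref{introthmA} is exactly what the paper does.

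The non-implication half, however, has a genuine gap. You do not produce a counterexample; you describe a perturbation strategy (push one primitive trace into $[-2,2]$ while hoping the palindromic axes stay pinned) and then concede that the key verification --- that $BIP$ survives for all infinitely many palindromic primitives after the perturbation --- is open. That verification is precisely the content of the claim, so nothing has been proved. Worse, the citation of~\cite{LX} for such examples is misplaced: the paper's Remark~\ref{binbin} points out that the second statement of Theorem II in~\cite{LX} (which would relate $BIP$ to $\B$ for discrete representations) is actually false, so that reference is not a safe source for the separating example. The paper's own counterexample is far more elementary and requires no deformation argument at all: take any representation into $SO(3)$ (for instance the finite group generated by order-two rotations about three mutually perpendicular axes). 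Every nontrivial element is elliptic, so every primitive has trace in $[-2,2]$ and both $BQ$ and $PS$ fail outright; yet all axes pass through the common fixed point, which also lies on all three special hyperelliptic axes, so $BIP$ holds with $D=0$ (taking $O$ to be the fixed point). You should replace your perturbation sketch with an explicit example of this kind; note also that $BIP$ does not constrain elliptic elements at all, which is exactly the loophole such examples exploit.
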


 In the case of real representations, Damiano Lupi~\cite{lupi} showed by case by case analysis following \cite{goldman} that the conditions $BQ$ and $PS$ are equivalent.
 
 To see that $BIP$ does not imply the other conditions, first note  that conditions $PS$ and $BQ$ both imply that no element in $\rho(\P)$ is elliptic or parabolic. The condition $BIP$ rules out parabolicity (consider the fixed point of a palindromic parabolic element to be a degenerate  axis which clearly meets the relevant  hyperelliptic axis at infinity). However  the condition does not obviously rule out elliptic elements in $\rho(\P)$.
In particular, consider any  $SO(3)$ representation, discrete or otherwise. Here all axes are elliptic and all pass through a central fixed point which is also at the intersection of all three hyperelliptic axes.  Such a representation clearly satisfies $BIP$.

\begin{remark} \label{binbin} {\rm We remark that the second statement of Theorem II in~\cite{LX} is false: there are $F_2$ representations with discrete image which have property $BIP$ but  which are not in $\B$, for example the finite orthogonal group  consisting of order two rotations round three mutually perpendicular axes. This group clearly satisfies $BIP$ but all its elements are elliptic. (The error stems from an oversight about accumulation points in their proof.) } 
\end{remark}

\begin{comment}  
%  It is not clear to us whether a real representation for which the images of a generator triple  all have traces in $(-2,2)$ corresponding to a sphere with three cone point satisfies $BIP$ or not. In this case the hyperelliptic axes form a triangle  joining  the fixed points of 
%$A, B$ and $AB$. We have been unable to determine  whether or not every palindromic generator intersects the extended axes within bounded distance of this triangle or not. 

%We have also not been able to determine either whether $BIP$ is open in the character variety, or produce examples of non-real non-elementary representations  which satisfy $BIP$ but not $PS$.
\end{comment} 

The plan of the paper is as follows. The hardest part of the work is to prove Theorem~\ref{BQimpliesPS}, that if $\rho$ satisfies the $BQ$-conditions then $\rho$ is primitive stable. In~\cite{serPS} this was done by first showing that  if $\rho$ satisfies the $BQ$-conditions then $\rho$ has the bounded intersection property, and using this to deduce $PS$.  However, as explained in  Section~\ref{geometry},  this is shown to be unnecessarily complicated by the improved estimates and methods of~\cite{LX}.

 In Section~\ref{farey} we present background on the Farey tree and also introduce Bowditch's condition of Fibonacci growth. In Section~\ref{Bowditchbackground}, we summarise Bowditch's method of assigning an orientation to  the edges of the Farey tree ($T$-arrows) and, subject to the $BQ$-conditions,  the existence of a finite attracting subtree. In~\ref{Warrows} we introduce a second way of orientating edges based on word length ($W$-arrows), and show that for all but finitely many words these two orientations coincide.

In Section~\ref{geometry} we collect the background and estimates  used to prove Theorem~\ref{introthmA}. This is based almost entirely on~\cite{LX}, in particular we need the amplitude of a right angle hexagon whose three alternate sides correspond to  the axes of a generator triple $(u,v,uv)$. As we shall explain, this quantity defined in~\cite{Fen} is an invariant of the representation $\rho$ and plays a crucial  part what follows. 
We then  continue following~\cite{LX} to get the crucial result  Proposition~\ref{longwordsqgeod}.  

Theorem~\ref{introthmA} is proved in Section~\ref{sec:BQimpliesBIP}. That $PS$ implies $BQ$ follows easily from the condition of Fibonacci growth (see Definition~\ref{fibonaccidefn}). This was proved  in~\cite{lupi}. 
 Proposition~\ref{longwordsqgeod}   and the results of  Section~\ref{Bowditchbackground} then lead to the proof of Theorem~\ref{BQimpliesPS},  that $BQ$ implies $PS$.

In Section~\ref{BIP} we discuss the condition $BIP$. We begin with a result which may be of independent interest on the palindromic representation of primitive elements, Proposition~\ref{uniquepalindromes}.  Theorem~\ref{introthmB}, that $BQ$ implies $BIP$, is then easily deduced from Theorem~\ref{BQimpliesPS}. In Theorem~\ref{direct}  we give an alternative  direct  proof using Equation~\eqref{distbound}, which uses the invariance of the amplitude of $\rho$ to give an improved version of the estimates  in~\cite{serPS}.

We would like to thank Tan Ser Peow  and Yasushi Yamashita for initial discussions about the original version~\cite{serPS} of this paper.  The work involved in Lupi's thesis~\cite{lupi}  also made a significant contribution.  We also thank Tan for pointing us to the work of Lee and Xu, and for a careful reading of this paper. The idea  of introducing the condition $BIP$ arose while  trying to interpret  some very interesting computer graphics involving non-discrete groups made by Yamashita. We hope to return to this topic elsewhere. 

As we hope we have made clear above, there is little in this revised version of~\cite{serPS} which is not essentially contained in~\cite{LX} and we wish to fully acknowledge the elegance and ingenuity of their method.

 \section{Primitive elements, the Farey tree and Fibonacci growth}  \label{farey} The Farey tessellation $\F$ as shown in Figures~\ref{fig:farey} and \ref{fig:colouredtree}  consists of  the images of the ideal triangle with vertices at $1/0,0/1$ and $1/1$ under the action of $SL(2,\ZZ)$ on the upper half plane, suitably conjugated to the position shown in the disk. The label $p/q$ in the disk is just the conjugated image of the actual point $p/q \in \RR$.

\begin{figure}[ht]
\includegraphics[width=5.5cm]{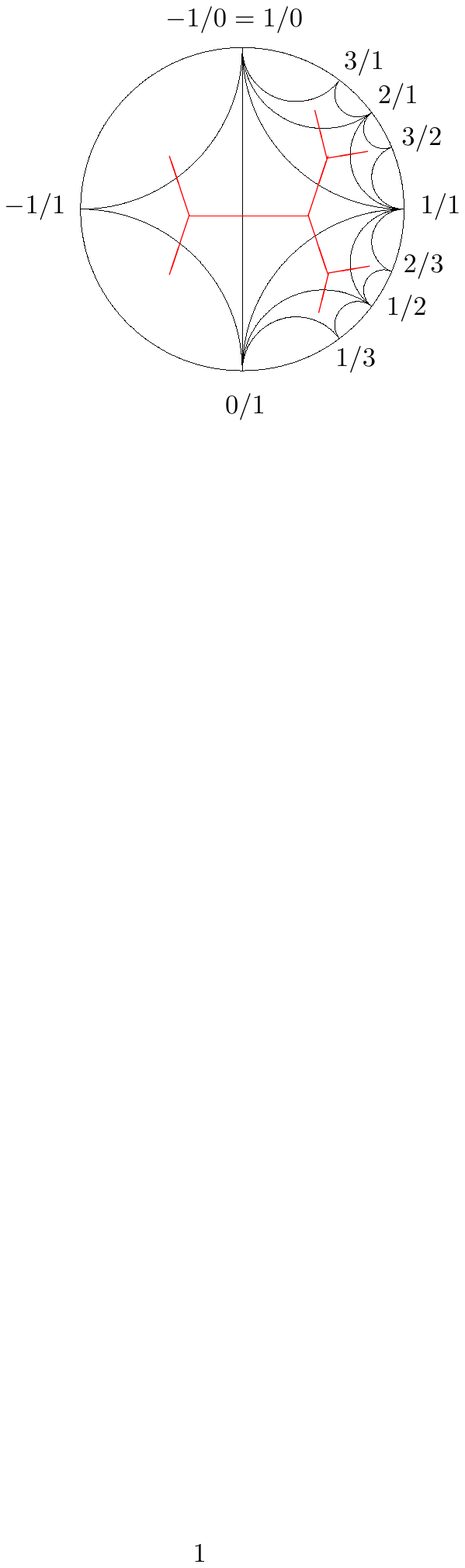}
\hspace{1cm}
\includegraphics[width=5.5cm]{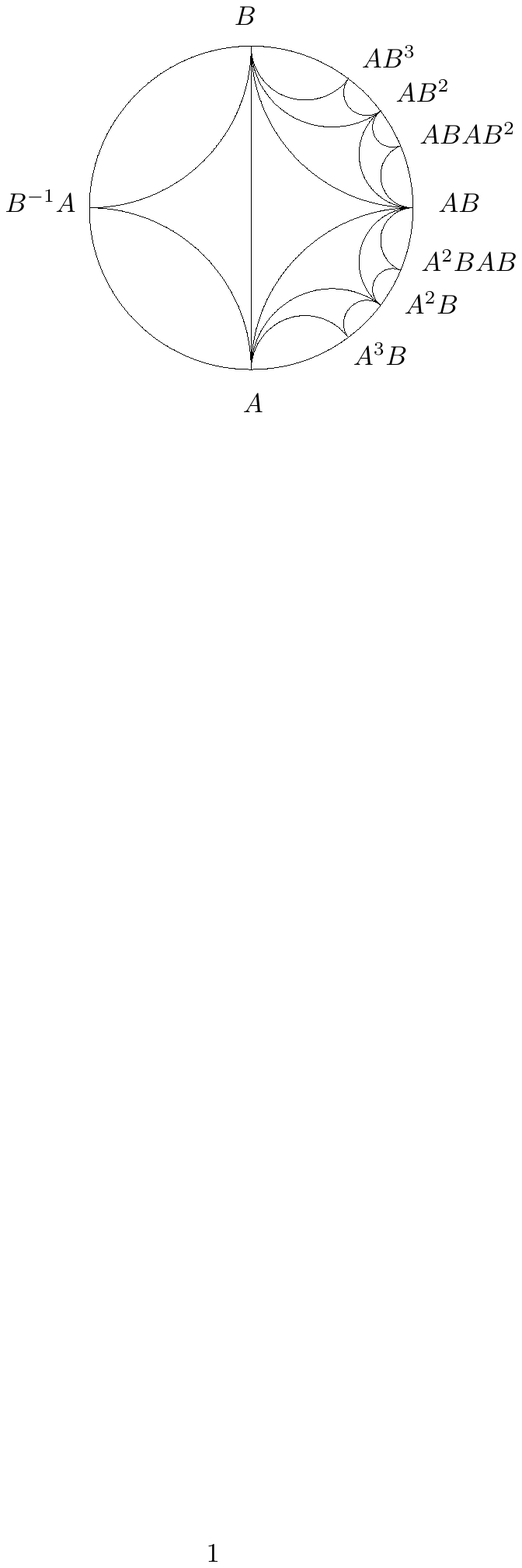}
\caption{The Farey diagram, showing the arrangement of rational numbers on the left with the corresponding  primitive words on the right. The dual graph shown on the left is the Farey tree $\T$. }\label{fig:farey}
\end{figure}

  Since the rational points in $\Qhat = \QQ\cup \infty$ are precisely the images of $\infty$ under $SL(2,\ZZ)$, they correspond bijectively to  the vertices of $\F$.
A pair  $p/q , r/s \in \hat \QQ$ are the endpoints of an edge  if and only if $pr-qs = \pm 1$; such pairs are called \emph{neighbours}.
A triple of points in $\hat \QQ$ are the vertices of a triangle precisely when they are the images of the vertices of the initial triangle $(1/0,0/1,1/1)$; such triples are always of the form 
$(p/q , r/s,(p+r )/( q+s))$ where  $p/q , r/s$ are neighbours.
In other words,  if $p/q , r/s$ are the endpoints of an edge, then the vertex of the triangle on the side away from the centre of the disk is found by `Farey addition' to be $(p+r )/( q+s)$. Starting from $1/0 = -1/0=  \infty$ and $0/1$, all   points in $\hat \QQ$ are obtained recursively  in this way. Note we need to start with $-1/0=  \infty$  to get the negative fractions on the left side of the left hand diagram in Figure~\ref{fig:farey}.

As noted in the introduction, up to inverse and conjugation, the equivalence classes of primitive elements  in $F_2$ are enumerated by $\Qhat$.
Formally, we set $\overline \P $ to be the set of equivalence classes of cyclically shortest primitive elements under the relation  $u \sim v$ if and only if either $v =  gug^{-1}$ or $v = gu^{-1}g^{-1}, g \in F_2$. We call the equivalence classes,  \emph{extended conjugacy classes} and denote the equivalence class of $u \in \P$ by $\bu$. In particular, the set of all cyclic permutations of a given word are in the same extended class. Since we are working in the free group, a word is  \emph{cyclically shortest} if it, together with all its cyclic permutations,  is reduced, that is, contains no occurrences of $x$ followed by $x^{-1}, x \in \{a^{\pm}, b^{\pm}\}$.  

The right hand picture in Figure~\ref{fig:farey}
 shows an enumeration of representative elements from $\overline \P $, starting with initial triple $(a,b,ab)$. Each vertex is labelled by a certain cyclically shortest generator $w_{p/q}$.  Corresponding to the process of Farey addition, the words  $w_{p/q}$ can be found by juxtaposition as indicated on the diagram.  Note that for this to work it is important to preserve the order:   if $u,v$ are the endpoints of an edge  with $u$ before $v$ in the anti-clockwise order round the circle, the correct  concatenation is $uv$, see Figure~\ref{fig:regionboundary}. Note also that the words on the left side of the diagram involve $b^{-1}$ and $a$, rather than $b$ and $a$,  corresponding to starting with $\infty = -1/0 $. 
It is not hard to see that pairs of primitive elements form a generating pair if and only if they are at the two endpoints of an edge of the Farey tessellation, while the words at the vertices of a triangle correspond to a generator triple of the form $(u,v,uv)$.

 The word $w_{p/q}$ is a representative of the extended conjugacy class identified with $p/q \in \hat \QQ $. It is almost but not exactly the same as  the \emph{Christoffel word} as described~\cite{LX}.  We denote this class by $ [p/q]$ and call $w_{p/q}$ the \emph{standard representative} of $ [p/q]$.    Likewise if $p/q, r/s \in \Qhat$ are neighbours we call   $(w_{p/q},w_{r/s})$  the standard (unordered) generator pair.
  It is easy to see that $e_a(w_{p/q}) / e_b(w_{p/q}) = p/q$, where $e_a(w_{p/q}),  e_b(w_{p/q})$ are the sum of the exponents in $w_{p/q}$ of $a,b$ respectively.  All other words in $ [p/q]$ are cyclic permutations of $w_{p/q}$ or its inverse.  For more details on primitive words in $F_2$, see for example~\cite{serInt} or~\cite{CMZ}.
  
 Later it will be essential to distinguish between a word $w_{p/q}$ and its inverse, while for an arbitrary generator pair $(u,v)$ we need to distinguish between $uv$ (or its cyclic conjugate $vu$),  and $uv^{-1}$ (or  its cyclic conjugate $v^{-1}u$). We do this using:
  \begin{definition} \label{signchoice}  
The word $w  \in F_2$ is  \emph{positive} if it is cyclically shortest and if all exponents of $a$ in $w$ are positive. A generator pair $(u,v)$ is positive if both $u$ and $v$ are positive. 
  \end{definition}
  
We remark that if $(u,v)$ is positive  then $||uv||_{a,b} = ||u||_{a,b} + || v||_{a,b}$.
In particular,  the standard  word $w_{p/q}$ constructed as indicated in Figure~\ref{fig:farey} is positive,  as is the standard generator pair $(w_{p/q},w_{r/s})$ whenever $p/q, r/s \in \Qhat$ are neighbours, see also Figure~\ref{fig:regionboundary}.

 \subsection{Fibonacci growth}

Since all words in an extended conjugacy class have the same length, and since $w_{p/q}$  can found by concatenation starting from the initial generators $(a, b)$, it follows that $||w||_{(a,b)}= p+q$  for all $w \in [p/q]$.  
 This leads to the following definition from~\cite{bow_mar}:

 \begin{definition} \label{fibonaccidefn}   A representation $\rho \co F_2 \to \SL $ has \emph{Fibonacci growth}  if there exists  $c>0$ such that for all cyclically reduced words $w \in \P$ we have $  \log^+|\Tr \rho(w)| < c ||w||_{(a,b)}$ and $  \log^+|\Tr \rho(w)| >  ||w||_{(a,b)}/c$
 for all but finitely many cyclically reduced $w \in \P$ where $\log^+x = \max \{0, \log |x| \}$. \end{definition}
Notice that although the definition is made relative to a fixed pair of generators for $F_2$, it is in fact independent of this choice.

The following  result is fundamental. It is proved using the technology described in the next section.
\begin{proposition}[\cite{bow_mar} Proof of Theorem 2, \cite{tan_gen} Theorem 3.3] \label{fibonacci} If $\rho \co F_2 \to \SL$ satisfies the $BQ$-conditions  then $\rho$ has Fibonacci growth. \end{proposition}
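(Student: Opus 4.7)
The plan is to prove upper and lower bounds separately. The upper bound is elementary submultiplicativity: fix any matrix norm, set $M = \max\{\|\rho(a^{\pm 1})\|, \|\rho(b^{\pm 1})\|\}$, so that $|\Tr \rho(w)| \leq 2\|\rho(w)\| \leq 2 M^{\|w\|_{(a,b)}}$ for every reduced $w \in F_2$, giving $\log^+|\Tr \rho(w)| \leq c_1 \|w\|_{(a,b)}$ with no exceptions.

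For the lower bound I would invoke the Bowditch tree machinery to be reviewed in Section~\ref{Bowditchbackground}. Under $BQ$, there is a finite connected attracting subtree $T_0 \subset \T$ such that every edge of $\T \setminus T_0$ carries a T-arrow pointing outward (away from $T_0$), and there is a uniform constant $\mu > 2$ with $|\Tr \rho(w_{p/q})| \geq \mu$ for every vertex $p/q$ lying outside $T_0$. The key tool is the Fricke identity $\Tr \rho(u)\Tr \rho(v) = \Tr \rho(uv) + \Tr \rho(uv^{-1})$. When the T-arrow on an edge of $\T \setminus T_0$ points outward to the Farey-sum vertex representing $uv$, this identity immediately yields $2|\Tr \rho(uv)| \geq |\Tr \rho(u)| \cdot |\Tr \rho(v)|$, and taking logarithms gives the additive recursion
\[ \log|\Tr \rho(uv)| \geq \log|\Tr \rho(u)| + \log|\Tr \rho(v)| - \log 2, \]
which exactly mirrors the additive relation $\|uv\|_{(a,b)} = \|u\|_{(a,b)} + \|v\|_{(a,b)}$ satisfied by standard representatives under Farey addition.

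I would then close the argument by induction on depth outside $T_0$ on the strengthened hypothesis $\log|\Tr \rho(w_{p/q})| \geq c\,\|w_{p/q}\|_{(a,b)} + \log 2$. The inductive step is a direct substitution:
\[ \log|\Tr \rho(uv)| \geq (c\|u\|+\log 2)+(c\|v\|+\log 2)-\log 2 = c\|uv\|+\log 2, \]
preserving the hypothesis with the same constant $c$. The main obstacle I anticipate is the base case: finitely many shallow primitives just outside $T_0$, where one Farey parent may lie inside $T_0$ and thus carry no a priori BQ lower bound. The finiteness of $T_0$ lets one absorb these into the finitely many exceptions permitted by Definition~\ref{fibonaccidefn}, while the slack $\log \mu > \log 2$ afforded by the uniform bound $\mu > 2$ outside $T_0$ primes the recursion with a positive $+\log 2$ in the inductive hypothesis that is regenerated at every subsequent step, yielding a single constant $c$ that works for all but finitely many primitives as required.
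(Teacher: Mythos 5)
Your upper bound is correct and standard (it needs no hypothesis on $\rho$ at all). The lower bound, however, has a genuine gap, and it is located exactly where the cited proofs (\cite{bow_mar} Cor.~3.6, \cite{tan_gen} Lemmas 3.17 and 3.19 -- partially reprised in this paper as Lemma~\ref{fibonacciwake}) have to work hardest. Your inductive step $\log|\Tr \rho(uv)| \geq \log|\Tr\rho(u)| + \log|\Tr\rho(v)| - \log 2$ requires \emph{both} Farey parents to carry the strengthened hypothesis $\log|\Tr\rho(\cdot)| \geq c\|\cdot\| + \log 2$, and in particular to have trace modulus strictly greater than $2$. But the $BQ$-conditions only forbid traces in the real interval $[-2,2]$ and only require $\Omega_{\rho}(2)$ to be finite, not empty: there may well be a primitive class $\bx_0$ with $0 < |\phi(\bx_0)| \leq 2$. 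Every one of the infinitely many regions $\by_n$ along the boundary $\partial\bx_0$ has $\bx_0$ as one of its two Farey parents, so for that whole family your recursion yields $\log|\hat\by_{n+1}| \geq \log|\hat\bx_0| + \log|\hat\by_n| - \log 2$ with $\log|\hat\bx_0| - \log 2 \leq 0$, i.e.\ no growth at all. These are not ``finitely many shallow primitives just outside $T_0$'' that can be absorbed as exceptions: they occur at arbitrary depth in the tree, and Definition~\ref{fibonaccidefn} demands the lower bound for all but finitely many classes.

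The mechanism that saves the lower bound along $\partial\bx_0$ is not the multiplicative (vertex) relation but the edge relation \eqref{eqn:inverse}, which gives the three-term linear recurrence $\hat\by_{n+1} = \hat\bx_0\,\hat\by_n - \hat\by_{n-1}$. Its solutions behave like $A\lambda^{n} + B\lambda^{-n}$ where $\lambda$ is a root of $t^2 - \hat\bx_0 t + 1 = 0$; one has $|\lambda| > 1$ precisely because $\hat\bx_0 \notin [-2,2]$, and the degenerate decaying case $A = 0$ (which occurs when $\hat\bx_0 = \pm\sqrt{\mu}$) is excluded by the finiteness of $\Omega_{\rho}(2)$. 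This is where both halves of the $BQ$-conditions are genuinely used, and it is the content of the case distinction in Lemma~\ref{fibonacciwake}. Two smaller points: the paper itself does not reprove this proposition but cites \cite{bow_mar} and \cite{tan_gen}; and under the paper's conventions the $T$-arrows outside the attracting tree point \emph{toward} it, not away from it -- your inequality $2|\hat\bz| \geq |\hat\bu||\hat\bv|$ is correct for the tail region $\bz$, but identifying that tail with the Farey sum $uv$ (so that the recursion matches $\|uv\| = \|u\| + \|v\|$) is exactly the coincidence of $W$- and $T$-arrows of Proposition~\ref{wordsandtraces}, which needs to be invoked rather than assumed.
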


\section{More on the Bowditch condition}\label{Bowditchbackground}

In this section we explain some further background to the $BQ$-conditions. For more detail see~\cite{bow_mar} and~\cite{tan_gen}, and for a quick summary~\cite{sty}. The \emph{Farey tree} $\T$ is the trivalent dual tree to the tessellation $\F$, shown superimposed on the left in Figure~\ref{fig:farey}.   As above, $\overline \P$ is identified $\Qhat$ and hence with the set $\Omega$ of complementary regions of   $\T$.  We label the region associated to a generator $u$ by $\bf u$, thus $\bf u' = \bf u$ for all $u' \sim u$. If $e$ is an edge of 
$\T$ we denote the adjacent regions by $\bu(e), \bv(e)$.

 For a given representation $\rho \co F_2 \to \SL$,  note that $\Tr [U,V]$ and hence $\mu =  \Tr [A,B]+2$ is independent of the choice of generators of $F_2$, where as usual $U = \Tr \rho(u)$ and so on. 
 Since $  \Tr U$ is constant on extended equivalence classes of generators, for $\bu \in \Omega$ we can define $\phi(\bu) =  \phi_{\rho}(\bu)=  \Tr U$ for any $ u \in \bu$.  
 For notational convenience we will sometimes write $\hat \bu$   
 in place of $\phi(\bu)$. 
 
 For matrices $X,Y \in \SL$  set $x = \Tr X, y = \Tr Y, z = \Tr XY$. Recall the trace relations:
\begin{equation}\label{eqn:inverse} \Tr XY^{-1} = xy-z \end{equation} and 
\begin{equation} \label{eqn:commreln}  x^2+y^2+z^2 = xyz + \Tr {[X,Y]} +2.
\end{equation}
Setting $\mu =  \Tr {[X,Y]} + 2$, this last equation takes the form 
$$x^2+y^2+z^2 - xyz = \mu.$$

As is well known and can be proven by applying the above trace relations inductively, if  $\bu,\bv,\bw$ is a triple of regions round a vertex  of $\T$, then $\hat \bu,\hat \bv,\hat \bw$ satisfy~\eqref{eqn:commreln} with $x = \hat \bu$ and so on. Likewise if $e$ is an edge of 
$\T$ with adjacent regions $\bu,\bv$ and if $\bw, \bz$ are the third regions at either end of $e$, then $\hat \bu,\hat \bv,\hat \bw, \hat \bz$ satisfy~\eqref{eqn:inverse}, that is, $\hat \bz = \hat \bu \hat \bv-\hat \bw$.  (A map  $\phi: \Omega \to \CC$ with this property is called a \emph{Markoff map}  in~\cite{bow_mar}.)

Given $\rho \co F_2 \to \SL$, let $e$ be an edge of $ \T$  and suppose that the regions meeting its two end vertices are $ \bw,  \bz$.
Following Bowditch~\cite{bow_mar}, orient $e$ by putting an arrow from $ \bz$ to $\bw$ whenever $|\hat \bz| > |\hat \bw|$. If both moduli are equal,  make either choice; if the inequality is strict, say that the edge is \emph{oriented decisively}.  We denote the oriented edge by $\vec e$ and  refer to this oriented tree as the \emph{Bowditch tree}, denoted $ \T_{\rho}$.  If $\vec e$  is a directed edge then its \emph{head} and \emph{tail} are its two ends, chosen so that the arrow on $\vec e$ points towards its head.

We say a path of oriented edges $\vec e_r, 1 \leq r \leq m$  is \emph{descending  to}  $\vec e_m$  if  the head of $\vec e_r$ is the tail of $\vec e_{r+1}$ for  $r = 1, \ldots, m-1$. It is \emph{strictly descending} if each arrow is oriented decisively.  A vertex at which all three arrows are incoming is called a \emph{sink}.

For any $m \geq 0$ and $\rho \co F_2 \to \SL$ define $\Omega_{\rho}(m) = \{ \bu \in \Omega : |\phi_{\rho}(\bu)| \leq m\}$.  From the definition, if $\rho \in \B$ then $\Omega_{\rho}(2)$ is finite and $\phi(\bu) \notin [-2,2]$ for $\bu \in \Omega$.

These following two lemmas show that starting from any directed edge $\vec e_1$, there is a unique descending path to an edge $\vec e_{m}$ which is adjacent to a region in $\Omega(2)$.

\begin{lemma}[{\cite [Lemma 3.7]{tan_gen}}]\label{forkvertex} %NEED
Suppose $\bu,\bv,\bw \in \Omega$ meet at a vertex $\q$  of $\T_{\rho}$ with the arrows on both the edges adjacent to $\bu$ pointing away from $\q$. Then either $|\phi(\bu)| \leq 2$ or $\phi(\bv) = \phi(\bw) = 0$.  In particular, if $\rho \in \B$ then $|\phi(\bu)| \leq 2$. \end{lemma}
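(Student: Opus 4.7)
The plan is to translate the hypothesis that both arrows at $\q$ on the edges adjacent to $\bu$ point outward into two algebraic inequalities among the Markoff values at $\q$, and then extract a bound on $\phi(\bu)$ by a short substitution argument.

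Set $x=\phi(\bu)$, $y=\phi(\bv)$, $z=\phi(\bw)$. Label the two edges adjacent to $\bu$ at $\q$ as $e_1$ (between $\bu$ and $\bv$, with third region $\bw$ at the $\q$-end and some $\bw'$ at the far end) and $e_2$ (between $\bu$ and $\bw$, with third region $\bv$ at $\q$ and some $\bv'$ at the far end). By the edge relation~\eqref{eqn:inverse} applied to the Markoff map $\phi$, one reads off $\phi(\bw')=xy-z$ and $\phi(\bv')=xz-y$. The Bowditch orientation rule then says that the arrows on $e_1$ and $e_2$ point away from $\q$ precisely when $|z|\ge|xy-z|$ and $|y|\ge|xz-y|$ respectively.

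Assume first that $yz\neq 0$. Dividing, set $t=xy/z$ and $s=xz/y$. The two inequalities become $|t-1|\le 1$ and $|s-1|\le 1$, so both $t$ and $s$ lie in the closed disk $D=\{w\in\CC:|w-1|\le 1\}\subset\{|w|\le 2\}$. In particular $|t|,|s|\le 2$, and since $ts=x^{2}$, we conclude $|x|^{2}=|t||s|\le 4$. The degenerate cases are handled directly: if $z=0$, the inequality $|z|\ge|xy-z|$ forces $xy=0$, hence either $x=0$ (so $|x|\le 2$) or $y=0$, which gives the alternative $\phi(\bv)=\phi(\bw)=0$; the case $y=0$ is symmetric.

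For the final clause, if $\rho\in\B$ then~\eqref{eqn:B2} gives $\phi(\bv),\phi(\bw)\notin[-2,2]$, so in particular neither vanishes. The alternative $y=z=0$ is thereby ruled out and we are left with $|\phi(\bu)|\le 2$. The only points that need care are the orientation convention (the arrow on an edge points \emph{towards} the end with smaller third-region trace) and the bookkeeping in the degenerate cases where the denominators in $t,s$ would vanish; the crux is the clean identity $ts=x^{2}$ combined with the disk containment $D\subset\{|w|\le 2\}$, so no real obstacle arises.
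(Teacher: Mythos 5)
Your argument is correct, and since the paper states this lemma without proof (citing Lemma~3.7 of Tan--Wong--Zhang), your derivation --- translating the two outward arrows at $\q$ into $|z|\ge|xy-z|$ and $|y|\ge|xz-y|$ and combining them via $ts=x^2$, which is just $|xy|\,|xz|\le 4|yz|$ --- is essentially the standard proof from the cited source, with the degenerate cases $y=0$ or $z=0$ handled properly. The only cosmetic quibble is that ``precisely when'' should be ``only if,'' since edges with equal moduli are oriented arbitrarily, but this does not affect the implication you use.
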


\begin{lemma}[{\cite[Lemma 3.11]{tan_gen} and following comment}] \label{infiniteray}
Suppose $\beta$ is an infinite ray consisting of a sequence of edges of $\T_{\rho}$ all of whose arrows point away from the initial vertex. Then $\beta$ meets at least one region $\bu \in \Omega$  with $|\phi( \bu)| < 2$. \end{lemma}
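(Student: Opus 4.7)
\emph{Plan.} I would argue by contradiction: suppose every region $\bu \in \Omega$ met by $\beta$ satisfies $|\phi(\bu)| \geq 2$. Label the vertices of $\beta$ as $q_0, q_1, q_2, \ldots$ with the arrow on $\vec e_n$ directed from $q_{n-1}$ to $q_n$, and for each $n \geq 1$ let $e'_n$ denote the third edge at $q_n$, which does not lie on $\beta$. Since both $\vec e_n$ and $\vec e_{n+1}$ are already determined (in, then out), the analysis of $q_n$ splits on the orientation of $e'_n$.

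\emph{Case 1: some $e'_n$ points out of $q_n$.} Then $\vec e_{n+1}$ and $\vec e'_n$ both point away from $q_n$, and these are precisely the two edges bounding the region $\bw$ at $q_n$ lying opposite $e_n$. Lemma~\ref{forkvertex} applied to $\bw$ at $q_n$ gives $|\phi(\bw)| \leq 2$, since the alternative conclusion $\phi = 0$ on the two regions adjacent to $e_n$ is immediately ruled out by the standing hypothesis (those regions are met by $\beta$, so would contradict $|\phi| \geq 2$). To upgrade to strict inequality, I would rule out the borderline equality $|\phi(\bw)| = 2$ by substituting into the Markoff relation~\eqref{eqn:commreln} at $q_n$ and tracing the recursion~\eqref{eqn:inverse} along $\beta$ beyond $q_n$: persistent equality cannot be maintained while remaining compatible with the decisive outward arrows on the tail of the ray, so a strictly smaller trace must appear downstream.

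\emph{Case 2: every $e'_n$ points into $q_n$.} Then at each interior vertex of $\beta$ there is a unique outgoing arrow, namely $\vec e_{n+1}$. I would write the Markoff recursion $\phi(\bw_{k+1}) + \phi(\bz_k) = \phi(\bv_k)\phi(\bw_k)$ at each $q_k$, where $\bw_k$ is the new region introduced at $q_k$, $\bv_k$ is the region persisting across $e_{k+1}$, and $\bz_k$ is the dropped region. The outward arrow on $\vec e_{k+1}$ supplies $|\phi(\bz_k)| > |\phi(\bw_{k+1})|$, while the elementary estimate $|xy| \geq 2\max(|x|,|y|)$ (valid when $|x|,|y|\geq 2$) combined with the hypothesis forces $|\phi(\bz_k)| \geq \max(|\phi(\bv_k)|,|\phi(\bw_k)|)$. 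Coupling this with the inward-pointing arrow on $e'_k$, which provides a competing upper bound on the beyond-region trace of $e'_k$ at $q_k$, yields an inconsistency in the sequence of trace moduli along the infinite descending ray.

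\emph{Main obstacle.} The principal technical snag is upgrading the weak bound $|\phi(\bw)|\leq 2$ supplied by Lemma~\ref{forkvertex} to the strict bound $|\phi(\bw)|<2$ demanded by the statement. Handling the borderline case $|\phi|=2$ cleanly requires a careful Markoff-recursive propagation along $\beta$, showing that trace-$2$ equality cannot be sustained on an infinite descending ray without some subsequent region of $\beta$ acquiring a strictly smaller trace; this is where the infinite length of $\beta$, as opposed to the single-vertex conclusion of Lemma~\ref{forkvertex}, is genuinely needed.
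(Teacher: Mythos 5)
The paper does not actually prove this lemma: it is quoted from Tan--Wong--Zhang (\cite{tan_gen}, Lemma~3.11 together with the comment following it), so there is no internal proof to compare against. Judged on its own terms, your proposal has a reasonable skeleton --- argue by contradiction, split on the orientation of the third edge $e'_n$ at each vertex, use Lemma~\ref{forkvertex} when $e'_n$ points outward and the edge relation~\eqref{eqn:inverse} when it points inward --- and the inequalities you do state are correct; in particular $|\phi(\bz_k)| \geq \max(|\phi(\bv_k)|,|\phi(\bw_k)|)$ does follow from $|\phi(\bz_k)|\geq|\phi(\bw_{k+1})|$ together with $|\phi(\bz_k)|+|\phi(\bw_{k+1})|\geq |\phi(\bv_k)\phi(\bw_k)|\geq 2\max(|\phi(\bv_k)|,|\phi(\bw_k)|)$ when both moduli are at least $2$.

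However, neither branch actually reaches a contradiction, and the two places where you defer the work are exactly where the content of the lemma lies. In Case~1 the fork lemma gives only $|\phi(\bw)|\leq 2$, hence $|\phi(\bw)|=2$ under your standing hypothesis; the promised ``Markoff-recursive propagation'' ruling out persistent equality is asserted, not carried out, and it is genuinely delicate because $|\phi|=2$ does not mean $\phi=\pm 2$: one must exclude configurations such as $\phi(\bu)=2e^{i\alpha}$, $\phi(\bv)=2e^{i\beta}$, $\phi(\bw)=2e^{i(\alpha+\beta)}$, where several consecutive moduli equal $2$ exactly and the edge relation is satisfied with equality in the triangle inequality. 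In Case~2 your estimates show only that $\max\{|\phi(\bv_k)|,|\phi(\bw_k)|\}$ is non-increasing in $k$, which is not by itself inconsistent with every modulus staying $\geq 2$; no contradiction is exhibited. Worse, the inward arrow on $e'_k$ compares $|\phi(\bv_k)|$ with the trace of the region at the \emph{far} end of $e'_k$, which is not met by $\beta$, so the standing hypothesis places no constraint on it and it cannot supply the missing bound. Two smaller points: the arrow on $\vec e_{k+1}$ yields only $|\phi(\bz_k)|\geq|\phi(\bw_{k+1})|$ rather than a strict inequality, since edges need not be decisively oriented; and the input that actually closes the argument in \cite{tan_gen} is a quantitative statement about the growth of $|\phi|$ around the boundary of a single region (their Lemmas~3.9--3.10), which your sketch never invokes.
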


\begin{lemma}\label{connected} %NEED
For any $m \ge 2$, the set  $\Omega_{\rho}(m)$  is connected. Moreover if $\rho \in \B$ then $|\Omega_{\rho}(m)|<\infty$. 
\end{lemma}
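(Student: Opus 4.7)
The plan is a descent argument. Given $\bu \in \Omega_\rho(m)$ with $|\phi(\bu)| > 2$, I aim to produce a path of successively adjacent regions from $\bu$ down into $\Omega_\rho(2)$ along which the trace stays bounded by $|\phi(\bu)| \le m$, so that the whole path lies in $\Omega_\rho(m)$. Combined with the connectedness of $\Omega_\rho(2)$ itself (a standard fact about the Bowditch attracting subtree, provable by similar arrow arguments specialised to $m=2$), this yields the first statement.

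To build the descent I would restrict the Bowditch orientation to $\partial \bu$, the bi-infinite line of edges bounding $\bu$ in $\T$. By Lemma~\ref{forkvertex} applied to $\bu$, at every vertex $q \in \partial \bu$ the two $\bu$-adjacent edges cannot both point away from $q$ unless either $|\phi(\bu)| \le 2$ (excluded by hypothesis) or the two other regions at $q$ both have trace zero and so lie in $\Omega_\rho(2)$, in which case $\bu$ has two immediate neighbours in $\Omega_\rho(2)$ and we are done. Assuming that exceptional configuration never occurs, the arrow pattern along $\partial \bu$ can never revert from leftward back to rightward as one crosses a vertex, so it is either uniformly oriented in a single direction, or it converges onto a unique transition vertex $q^\ast$. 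In the uniform case, an infinite half-ray of $\partial \bu$ has all its arrows pointing away from its starting vertex; Lemma~\ref{infiniteray} then produces a region with $|\phi| < 2$ meeting this ray, and since every region adjacent to $\partial \bu$ is either $\bu$ itself (excluded as $|\phi(\bu)| > 2$) or a neighbour of $\bu$, this region must be a neighbour of $\bu$ lying in $\Omega_\rho(2)$.

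The convergent case, where no half-ray of $\partial \bu$ has uniformly outward arrows, is the main obstacle. Here I would leave $\partial \bu$ through the third edge at $q^\ast$ (the one separating the two neighbours of $\bu$ at $q^\ast$) and apply the Bowditch flow; the flow either descends into a sink whose incident regions include elements of $\Omega_\rho(2)$, or produces an infinite outward-pointing ray to which Lemma~\ref{infiniteray} applies. Either way one obtains a descending path from $\bu$ to $\Omega_\rho(2)$ with intermediate traces controlled, although verifying that this detour stays inside $\Omega_\rho(m)$ is the most delicate point and requires careful bookkeeping of how $|\phi|$ evolves away from $\partial \bu$.

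For the finiteness statement when $\rho \in \B$, I would invoke Proposition~\ref{fibonacci}: Fibonacci growth gives $\log^+ |\phi(\bu)| \ge \|u\|_{(a,b)}/c$ for all but finitely many primitive classes, so the constraint $|\phi(\bu)| \le m$ bounds $\|u\|_{(a,b)}$, and there are only finitely many primitive classes of bounded word length, giving $|\Omega_\rho(m)| < \infty$.
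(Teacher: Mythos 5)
Your finiteness argument is exactly the paper's: Proposition~\ref{fibonacci} gives $\log^+|\phi(\bu)|\ge \|u\|_{(a,b)}/c$ off a finite set, so $|\phi(\bu)|\le m$ forces $\|u\|_{(a,b)}\le c\log m$ with finitely many exceptions, and there are finitely many primitive classes of bounded length. That half is fine. For connectedness the paper simply cites Theorem~3.1(2) of Tan--Wong--Zhang, so your attempt to prove it from Lemmas~\ref{forkvertex} and~\ref{infiniteray} is a genuinely independent route --- but it has two real gaps.

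First, the overall strategy is circular and, worse, can be vacuous. You reduce connectedness of $\Omega_{\rho}(m)$ to (a) a descent from each $\bu\in\Omega_{\rho}(m)$ into $\Omega_{\rho}(2)$ and (b) connectedness of $\Omega_{\rho}(2)$, which you defer as ``a standard fact provable by similar arguments specialised to $m=2$''. But (b) is precisely the $m=2$ instance of the statement being proved, and your proposed proof of it would again need a base case. More seriously, $\Omega_{\rho}(2)$ may be empty --- the $BQ$-conditions only require it to be finite, and nothing in the hypotheses of the connectedness claim forces it to be nonempty --- in which case there is nothing to descend to, yet $\Omega_{\rho}(m)$ for larger $m$ must still be shown connected. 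Second, your ``convergent case'' is not actually completed: once you leave $\partial\bu$ at the transition vertex and follow the Bowditch flow, the regions you sweep past are compared only with one another by the $T$-arrows (each step replaces the region at the tail by one of smaller modulus at the head), and nothing bounds their moduli by $m$; the starting data only controls $|\phi(\bu)|$, not its neighbours. You flag this yourself as ``the most delicate point'', but it is exactly where the proof has to happen.

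The standard proof avoids both issues and does not pass through $\Omega(2)$ at all. Suppose $e$ is an edge neither of whose adjacent regions $\bu,\bv$ lies in $\Omega_{\rho}(m)$, and let $\bz,\bw$ be the regions at its tail and head, so $|\hat\bz|\ge|\hat\bw|$. From the edge relation $\hat\bz+\hat\bw=\hat\bu\hat\bv$ one gets $2|\hat\bz|\ge|\hat\bu\hat\bv|>m^2\ge 2m$, so $|\hat\bz|>m$; at the tail vertex all three regions now have modulus $>m$, and inducting outwards shows every region in the tail-side component of $\T\setminus\{e\}$ has modulus $>m$. Hence no edge with both adjacent regions outside $\Omega_{\rho}(m)$ can separate two regions of $\Omega_{\rho}(m)$, which is connectedness. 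I would either give this argument or, as the paper does, simply cite \cite{tan_gen} Theorem~3.1(2).
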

\begin{proof} The first statement is~\cite{tan_gen} Theorem 3.1(2). 
That $\Omega_{\rho}(m)$ is finite follows from  Proposition~\ref{fibonacci}, see~\cite{tan_gen} P. 773. \end{proof}

The result which we mainly use is the following:

\begin{theorem}\label{sinktree} There is a constant   $M_0 \geq 2$ and  a finite connected non-empty subtree tree $T_F$  of $\T_{\rho}$ so that for every edge $\vec e$ not in $T_F$,  there is a strictly descending path   from $\vec e$  to an edge of $T_F$.
Moreover  if regions $\bu,\bv$ are adjacent to an edge of $\T$, then $  |\Tr U|, |\Tr V| \leq M_0$   implies $e \in T_F$.  For any $M\geq M_0$, the tree $T_F = T_F(M_0)$ can be enlarged  
to  a larger tree $T_F(M)$ with similar properties, and in addition $T_F$ can be enlarged to include any finite set of edges.  \end{theorem}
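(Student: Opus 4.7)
Define $T_F = T_F(M_0)$ to be the minimal connected subtree of $\T_\rho$ containing every edge $e$ whose two adjacent regions $\bu(e),\bv(e)$ both lie in $\Omega_\rho(M_0)$, where $M_0\ge 2$ is a constant to be chosen large enough (in terms of $\mu$). Lemma~\ref{connected} gives, for $\rho\in\B$, that $\Omega_\rho(M_0)$ is finite and connected; since any two distinct regions of $\T$ share at most one edge, only finitely many such interior edges exist, and so their minimal connecting subtree $T_F$ is finite. The condition ``both adjacent traces $\le M_0$ implies $e\in T_F$'' is then built into the construction.

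For the descent property, fix any $\vec e_1\notin T_F$. At every vertex $q$ of $\T_\rho$, Lemma~\ref{forkvertex} applied to each of the three regions at $q$ forbids two outward-pointing edges from sharing a common region (else that region would have $|\phi|\le 2$, contradicting $\rho\in\B$). Summing these three constraints shows at most one edge at $q$ can point outward, so $q$ is either a sink or has a unique outgoing continuation. Starting from the head of $\vec e_1$, this uniquely determines a descending path $\vec e_1,\vec e_2,\dots$. If the path were infinite, it would form a ray from the head of $\vec e_1$ with all arrows pointing away from that head, and Lemma~\ref{infiniteray} would produce a region with $|\phi|<2$, again contradicting $\rho\in\B$. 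Hence the path terminates at an edge $\vec e_m$ whose head is a sink $v_m$. The three traces $x_1,x_2,x_3$ at $v_m$ satisfy both the Markoff identity $x_1^2+x_2^2+x_3^2-x_1x_2x_3=\mu$ and the three sink inequalities $|x_ix_j-x_k|>|x_k|$; combining these yields an a priori uniform bound on each $|x_i|$ depending only on $\mu$. Choosing $M_0$ at least this sink bound puts $\vec e_m\in T_F$. Non-decisive ties in the arrow orientation occur on only finitely many edges, since by Fibonacci growth (Proposition~\ref{fibonacci}) the values $|\phi(\bu)|$ diverge with $||u||$; absorbing these finitely many edges into $T_F$ ensures the path is strictly descending throughout.

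For the enlargement statements, the same construction with any $M\ge M_0$ in place of $M_0$ produces a finite connected $T_F(M)\supseteq T_F(M_0)$ with the analogous property, since Lemma~\ref{connected} still applies. To include an arbitrary finite set of edges, take the minimal connected subtree containing $T_F$ together with those edges; this remains finite, and the descent property is preserved a fortiori since the target tree has only grown. The principal obstacle is the sink bound in the previous paragraph: extracting from the three sink inequalities together with the Markoff relation a uniform upper bound on $|x_i|$ in terms of $\mu$. This elementary but case-based finite computation in $\CC^3$ is the concrete source of the constant $M_0$; everything else is a formal consequence of Lemmas~\ref{forkvertex},~\ref{infiniteray} and~\ref{connected}.
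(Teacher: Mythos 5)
There is a genuine gap, and it stems from a misreading of the $BQ$-conditions that you invoke at the two load-bearing points of your descent argument. The conditions in Definition~\ref{defineBQ} say that $\Tr\rho(g)$ avoids the \emph{real interval} $[-2,2]$ and that $\Omega_\rho(2)=\{\bu: |\phi(\bu)|\le 2\}$ is \emph{finite} --- not empty. So a region with $|\phi(\bu)|\le 2$ (or $<2$) does not contradict $\rho\in\B$. This breaks your claim that at every vertex at most one edge points outward (Lemma~\ref{forkvertex} concludes exactly that the shared region lies in $\Omega_\rho(2)$, which is permitted, so forks genuinely occur at vertices on the boundary of regions in $\Omega_\rho(2)$), and it breaks your termination argument: Lemma~\ref{infiniteray} applied to a hypothetical infinite descending ray produces a region in $\Omega_\rho(2)$, which is not a contradiction. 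The actual proof (Bowditch \S3, Tan--Wong--Zhang \S3, which is all the paper itself cites) has to work harder precisely here: one uses the finiteness of $\Omega_\rho(2)$ together with an analysis of the arrow pattern around the boundary $\partial\bu$ of a single region with $\phi(\bu)\notin[-2,2]$ (the neighbouring values satisfy a linear recurrence and grow exponentially outside a finite segment of $\partial\bu$, toward which the boundary arrows point) to show that every descending path is eventually trapped in a finite subtree.

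The second problem is the step you yourself flag as the ``principal obstacle'': extracting from the three sink inequalities $|x_ix_j-x_k|\ge|x_k|$ and the relation $x_1^2+x_2^2+x_3^2-x_1x_2x_3=\mu$ a bound on \emph{each} $|x_i|$ depending only on $\mu$. These constraints only give $|x_i|^2\le|\mu|+|x_j|^2+|x_k|^2$ cyclically, which bounds nothing; indeed one can exhibit triples satisfying all the constraints with two of the $|x_i|$ arbitrarily large and one small (take $x_2$ close to $ix_1$ with $|x_1|$ large, forcing $x_3$ near $0$), so only $\min_i|x_i|$ is controlled. Consequently your $M_0$ cannot be chosen in terms of $\mu$ alone, and --- more seriously --- a sink may have only one adjacent region in $\Omega_\rho(M_0)$, in which case none of its edges has \emph{both} adjacent regions in $\Omega_\rho(M_0)$ and your $T_F$ need not contain the terminal edge of the descending path. (For $\rho\in\B$ the sink traces are in fact uniformly bounded, but the bound depends on the finitely many values of $\phi$ on $\Omega_\rho(2)$, i.e.\ on the representation, again via the recurrence along $\partial\bu$.) Your justification that ties are eventually decisive via Fibonacci growth is likewise too quick, since two large moduli can still be equal. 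The enlargement statements at the end are fine and match the paper's short argument, but the core descent-and-termination part needs to be replaced by the cited Bowditch/Tan--Wong--Zhang analysis rather than derived from Lemmas~\ref{forkvertex} and~\ref{infiniteray} alone.
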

\begin{proof} Most of the assertions are proved  on p. 782 of ~\cite{tan_gen}, see also Corollary 3.12 of~\cite{bow_mar}.  To see that $T_F$ can always be enlarged to a tree $T_F(M)$ with similar properties, see the proofs of Theorem 3.2 of~\cite{tan_gen} and Theorem 3.16 of~\cite{bow_mar}.  (In fact there is a precise condition to determine which edges are in $T_F$, see~\cite{tan_gen} Lemma 3.23.) Finally, let $\mathcal K$ be any finite subset of $\T$ and let 
$M = \max \{ \phi(\bu), \phi(\bv)  : \bu, \bv \ \ \mbox{\rm{are adjacent to an  edge in}} \ \  \mathcal K \}$. Enlarging $T_F$ to $T_F(M)$ the result is clear.
\end{proof}

\begin{definition} \label{wake} Let  $\vec e$  be a directed edge.  The  \emph{wake} of $\vec e$, denoted $\W(\vec e)$, is the set of regions whose boundaries are contained in the component of $\T \setminus \{\vec e\}$ which contains the tail of $\vec e$, together with the two regions adjacent to $\vec e$.  \end{definition}
We remark that the wake $\W(\vec e)$ is the subset of $\Omega$ denoted $\Omega^{0-}(\vec e)$ in~\cite{bow_mar} and \cite{tan_gen}. 
Also denote by $  \W_{\E}(\vec e)$ the set of edges $\vec e$ which are adjacent to two regions in $\W(\vec e)$.

Theorem~\ref{sinktree} says that if $\vec e \notin T_F$ then the arrow on $\vec e$ points towards $T_F$. We note the following  slight variation:

 \begin{lemma}\label{wake1} If $\vec e  \notin  T_F$  then every edge in $\W_{\E}(\vec e)$  is oriented towards $\vec e$. \end{lemma}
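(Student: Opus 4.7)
The plan is to leverage Theorem~\ref{sinktree} together with the fact that paths in a tree are unique. Since $\vec e \notin T_F$, Theorem~\ref{sinktree} supplies a strictly descending path beginning at $\vec e$ and ending on some edge of $T_F$. Such a path leaves $\vec e$ through its head, so all of $T_F$ lies in the component of $\T \setminus \{\vec e\}$ on the head-side of $\vec e$. By the definition of the wake, $\W(\vec e)$ consists of the tail-side regions together with the two regions adjacent to $\vec e$, and hence every edge of $\W_\E(\vec e)$ distinct from $\vec e$ itself lies strictly in the tail-side component of $\T \setminus \{\vec e\}$.

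Next, fix any $\vec e' \in \W_\E(\vec e)$ with $\vec e' \neq \vec e$. Applying Theorem~\ref{sinktree} again, there exists a strictly descending path from $\vec e'$ to some edge of $T_F$. Because $\T$ is a tree, the underlying edge-sequence of this descending path must coincide with the unique geodesic edge-path in $\T$ from $\vec e'$ to its terminus in $T_F$, and this geodesic necessarily includes $\vec e$ in order to pass from the tail-side component into the head-side. Along any descending path the arrow on each edge agrees with the direction of traversal, so the arrow on $\vec e'$ points toward its successor in the path; since that successor lies on the side of $\vec e'$ containing $\vec e$, the edge $\vec e'$ is oriented toward $\vec e$, as required.

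I do not anticipate a serious obstacle: the statement is essentially a formal consequence of Theorem~\ref{sinktree} once one correctly identifies the component of $\T \setminus \{\vec e\}$ containing $T_F$ as the head-side. The only small points needing care are (i) that $\vec e$ itself belongs to $\W_\E(\vec e)$, for which ``oriented toward $\vec e$'' is vacuous, and (ii) that the tree property of $\T$ promotes the mere \emph{existence} of a descending path guaranteed by Theorem~\ref{sinktree} into \emph{uniqueness} of the underlying tree-geodesic, which is what forces the path to traverse $\vec e$ in the predicted direction.
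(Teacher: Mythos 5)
Your proof is correct. The paper's own argument also rests on Theorem~\ref{sinktree} plus elementary tree combinatorics, but it is organized differently: it introduces the boundary $\partial(T_F)$ (the edges of $T_F$ whose tails meet the head of an edge outside $T_F$), lets the descending paths from $\vec e$ and from $\vec f \in \W_{\E}(\vec e)$ land on boundary edges $\vec g$ and $\vec h$ respectively, and then uses the disjointness of the wakes of distinct boundary edges to force $\vec g = \vec h$ and hence to funnel the descending path from $\vec f$ through $\vec e$. You replace this bookkeeping with the more transparent observation that the edge $e$ separates $\T$, that $T_F$ --- being connected and not containing $e$ --- lies entirely in the head-side component, and that a strictly descending path is the unique tree geodesic to its terminus and therefore must cross $e$ from the tail side. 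The one step you should make explicit is why a descending path cannot backtrack: consecutive edges share a vertex which is the head of one and the tail of the other, so if they were the same unoriented edge it would have to carry both $T$-arrow orientations, which is impossible; this non-backtracking property is exactly what identifies the descending path with the tree geodesic and so forces it through $\vec e$ in the stated direction. With that remark added, your argument is complete and, if anything, cleaner than the one in the paper.
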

  \begin{proof}    This follows easily from the definitions. In detail,  let $\dd(T_F)$ be the boundary of $T_F$,  that is, the set of edges in $T_F$ whose tails meet the head of an edge not in $T_F$. If $\vec e  \in \dd(T_F)$  then  by Theorem~\ref{sinktree} the arrow on every edge in $\W_{\E}(\vec e)$ points towards $\vec e$.  Now suppose that $\vec e \notin \dd(T_F)$ and that $\vec f \in \W_{\E}(\vec e)$. Suppose that the descending path $\b(e) $ from $ \vec e$ lands on $\vec g \in \dd(T)$  while the descending path $\b(f) $ from $ \vec f$ lands on $\vec h \in \dd(T)$. Then $\b(e)  \subset \W_{\E}(\vec g)$ while $\vec f \in \b(f)  \subset \W_{\E}(\vec h)$. Since $\W_{\E}(\vec g)$ and $\W_{\E}(\vec h)$ are disjoint  unless $g = h$  and $\vec f \in \W_{\E}(\vec e) \subset  \W_{\E}(\vec g)$ this gives the result.
  \end{proof}

Finally, for the proof of Theorem~\ref{direct} we need the following refinement of Theorem~\ref{fibonacci}, which is a minor variation of Lemmas 3.17 and Lemma 3.19 of~\cite{tan_gen}. 
For $\bu \in \W(\vec e)$ let $d(\bu)$ be the number of edges in the shortest path from $\bu$ to the head of $\vec e$.
Following~\cite{tan_gen} P.777,  define the \emph{Fibonacci function} $F_{\vec e}$ on $\W(\vec e)$ as follows:
$F_{\vec e}(\bw) = 1$ if $\bw$  is adjacent to $\vec e$ and 
$F_{\vec e}(\bu) = F_{\vec e}(\bv)+ F_{\vec e}(\bw)$  otherwise,  where  $\bv,\bw$ are the two regions meeting $\bu$ and closer to $\vec e$ than $\bu$, that is, with $d(\bv) < d(\bu),d(\bw) < d(\bu)$.

\begin{lemma} \label{Increasing3} \label{fibonacciwake} Suppose that $\rho \in \B$
and that $\Vec e$ is a directed  edge such at most one of the adjacent regions is in $\Omega(2)$.  Suppose also that no edge in 
$\W_{\E}(\vec e)$ is adjacent to regions in $\Omega(2)$ on both sides. 
Then there exist $c>0, n_0 \in \NN$, independent of $\Vec e$ (but depending on $\rho$),  so that $\log |\phi_{\rho}(\bu)| \geq c F_{\vec e}(\bu)$ for all but at most $n_0$ regions  $\bu \in \W(\vec e)$. \end{lemma}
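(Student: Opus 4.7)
The plan is to emulate the Fibonacci-growth arguments of Lemmas 3.17 and 3.19 in \cite{tan_gen}, with the absolute word-length there replaced by $F_{\vec e}$. The central tool is the edge trace relation \eqref{eqn:inverse}: for $\bu\in\W(\vec e)$ not adjacent to $\vec e$, letting $\bv,\bw$ be its two neighbours closer to $\vec e$ and $\bz$ the fourth region incident to the edge $e'$ between $\bv$ and $\bw$ (necessarily on the $\vec e$-side of $e'$), one has $\hat\bu=\hat\bv\hat\bw-\hat\bz$.

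The first step is to establish that every edge of $\W_{\E}(\vec e)$ is oriented towards $\vec e$. When $\vec e\notin T_F$ this is precisely Lemma~\ref{wake1}. Otherwise, by Theorem~\ref{sinktree} we may enlarge $T_F$ so that $\vec e$ lies on its boundary---this enlargement preserves the hypothesis that at most one adjacent region of $\vec e$ lies in $\Omega(2)$ and that no edge of $\W_{\E}(\vec e)$ meets two regions in $\Omega(2)$---after which Lemma~\ref{wake1} applies. Specialising to $e'$, we obtain $|\hat\bu|\geq|\hat\bz|$, and combining with the trace identity,
\[
|\hat\bv\hat\bw|=|\hat\bu+\hat\bz|\leq|\hat\bu|+|\hat\bz|\leq 2|\hat\bu|,
\]
so $|\hat\bu|\geq|\hat\bv\hat\bw|/2$. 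Setting $L(\bx):=\log|\hat\bx|$, this is the subadditive Fibonacci-type inequality
\[
L(\bu)\;\geq\;L(\bv)+L(\bw)-\log 2.
\]

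The Fibonacci induction then follows the template of \cite{tan_gen}. By Lemma~\ref{connected} fix $M>4$ large and set $\mathcal E:=\Omega_\rho(M)\cap\W(\vec e)$; its cardinality is at most $|\Omega_\rho(M)|$, a bound depending only on $\rho$. Choose $c$ with $0<c<\log M-2\log 2$. Iterating the above recursion and expanding $\bu$ down through its descendants until one first reaches a stopping set $\mathcal L(\bu)\subset\W(\vec e)$ of regions outside $\mathcal E$ (or regions adjacent to $\vec e$), one telescopes to
\[
L(\bu)\;\geq\;\sum_{\ell\in\mathcal L(\bu)}L(\ell)-(|\mathcal L(\bu)|-1)\log 2\;\geq\;|\mathcal L(\bu)|(\log M-\log 2)+\log 2,
\]
from which the desired bound $L(\bu)\geq cF_{\vec e}(\bu)+\log 2$ follows once we know $|\mathcal L(\bu)|$ is comparable to $F_{\vec e}(\bu)$.

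The main obstacle is precisely controlling the ratio $|\mathcal L(\bu)|/F_{\vec e}(\bu)$: a recursion step may have one of $\bv,\bw$ in $\mathcal E$, in which case we cannot stop there. The hypothesis that no edge of $\W_{\E}(\vec e)$ meets two regions in $\Omega(2)$, together with the connectivity and finiteness of $\Omega_\rho(M)$ from Lemma~\ref{connected}, ensures that $\mathcal E\cap\W(\vec e)$ cannot contain two neighbouring regions along a recursion branch and so the number of regions for which this ratio is bad is uniformly finite. Absorbing these finitely many exceptional regions together with $\mathcal E$ into a single exceptional set of size $n_0\leq|\Omega_\rho(M)|+O_\rho(1)$ completes the proof; both $c$ and $n_0$ depend only on $\rho$ and not on $\vec e$, as required.
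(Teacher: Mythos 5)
Your overall strategy --- reproving the Fibonacci estimates of Lemmas 3.17 and 3.19 of~\cite{tan_gen} directly from the edge relation $\hat\bu=\hat\bv\hat\bw-\hat\bz$ --- is reasonable in spirit, but two steps fail. First, your justification of the claim that every edge of $\W_{\E}(\vec e)$ is oriented towards $\vec e$ does not work: enlarging $T_F$ only adds edges, so if $\vec e$ already lies in $T_F$ you cannot ``enlarge $T_F$ so that $\vec e$ lies on its boundary.'' The correct route, in the case where neither region adjacent to $\vec e$ is in $\Omega(2)$, is a joint induction outward from $\vec e$ using the fork lemma (Lemma~\ref{forkvertex}): if $\vec f$ points towards $e$ and its two adjacent regions $\bv,\bw$ have $|\phi|>2$, then neither of the two edges behind $\vec f$ can point away from their common vertex (else $|\phi(\bv)|\le 2$ or $|\phi(\bw)|\le 2$), and the resulting estimate $|\hat\bu|\ge|\hat\bv\hat\bw|/2>2$ propagates the inductive hypothesis. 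With that repaired, your telescoping does give this clean case with $c=m-\log 2$ and $n_0=0$, which is exactly how the paper disposes of it (by citing Lemma 3.17).

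The serious gap is the case, explicitly permitted by the hypotheses, in which one region $\bx_0$ adjacent to $\vec e$ lies in $\Omega(2)$. Then $\bx_0\in\W(\vec e)$ has infinitely many neighbours in the wake, and for every such neighbour $\bu$ one of its two parents in your recursion is $\bx_0$; since $L(\bx_0)-\log 2\le 0$, the step $L(\bu)\ge L(\bv)+L(\bw)-\log 2$ then yields nothing. So the set of ``bad'' regions is infinite, not ``uniformly finite'' as you assert, and every descendant of a bad region is contaminated. (Your appeal to the hypothesis on $\Omega(2)$ to control $\mathcal{E}=\Omega_\rho(M)\cap\W(\vec e)$ also fails: two adjacent regions can both lie in $\Omega_\rho(M)\setminus\Omega(2)$.) This case is precisely why the paper invokes the separate Lemma 3.19 of~\cite{tan_gen}, which analyses the linear recurrence $\hat\by_{n+1}=\hat\bx_0\hat\by_n-\hat\by_{n-1}$ satisfied by the regions around $\partial\bx_0$ and uses $\phi(\bx_0)\notin[-2,2]$ to recover exponential growth of these neighbours away from finitely many exceptions; the actual content of the paper's proof is the verification of that lemma's hypothesis, namely $\W(\vec e)\cap\Omega(2)=\{\bx_0\}$, obtained by decomposing $\W(\vec e)$ as $\{\bx_0\}\cup\bigcup_{n}\W(\vec{\epsilon}_n)$ over the edges $\vec{\epsilon}_n$ whose heads meet $\partial\bx_0$ and applying Lemma 3.17 to each piece. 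Your proof contains no substitute for this ingredient.
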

\begin{proof} This essentially Lemmas 3.17 and 3.19 of~\cite{tan_gen}, see also Corollary 3.6 of~\cite{bow_mar}.

Since $\Omega(M)$ is finite for any $M>2$, the set  $\{ \log |\phi(\bu)|: \bu \notin \Omega(2) \} $ has a minimum $m > \log 2$. By Lemma 3.17,  if neither adjacent region to $\Vec e$ is in $\Omega(2)$, we can  take $c = m - \log 2$ and $n_0 = 0$. 

Suppose then that exactly one of the adjacent regions $\bx_0$ to $\Vec e$ is in $\Omega(2)$. To apply Lemma 3.19, we need to verify that $ \W(\vec e) \cap \Omega(2) = \{\bx_0\}$.  Note that no region which meets the boundary   $\partial \bx_0$ of $\bx_0$ can be in $\Omega(2)$ by hypothesis.  Let $\vec \e_n, n \in \NN$ be the oriented edges whose heads meet  $\partial  \bx_0$ but which are not contained in $\partial \bx_0$, numbered so that $\vec \epsilon_1$ is the edge not contained in $\partial \bx_0$
whose head meets $\vec e$.
Then  neither of the two adjacent regions to $\vec \epsilon_n$ are   in $\Omega(2)$ for any $n$. It follows from Lemma 3.17 that $\W(\vec \epsilon_n) \cap \Omega(2) = \emptyset$ for $n \in \NN$. Since clearly 
$ \W(\vec e) =  \{\bx_0\} \cup \bigcup_{n\in \NN} \W(\vec \epsilon_n)$ the claim follows.  

Now Lemma  3.19 gives $c>0$ and  $n_0\in \NN$, depending only on $\bx_0$, so that  $\log |\phi_{\rho}(\bu)| \geq c F_{\vec e}(\bu)$ for all but at most $n_0$ regions  $\bu \in \W(\vec e)$. Since  $\Omega(2)$ is finite and 
$\bx_0 \in \Omega(2)$, we can adjust the constants so as to be uniform independent of $\vec e$. \end{proof}

 %%%%%%%%%%%%%%%%%%%%%

\begin{comment}

Let $\by_i, i \in \ZZ$ be the regions in order around the boundary $\dd \bu$ of a single region $\bu \in \Omega$. It is easy to see (see the proof of Proposition~\ref{uniquepalindromes})   that the values $\phi (\by_i)$ satisfy a simple  recurrence relation and hence   grow exponentially unless  $\phi(\bu)$ is in the exceptional set $E = [-2,2] \cup \{\pm \sqrt{\mu}\} \subset \CC$. If $\rho \in \B$ then 
by definition $\phi(\bu) \notin [-2,2]$, while if $\phi(\bu)= \pm \sqrt{\mu}$ the values approach zero in one direction round $\dd \bu$ (see \cite{tan_gen} Lemma 3.10) and hence $\rho \not\in \B$ since condition \eqref{eqn:B2} is not satisfied.
Thus we find: 

\begin{lemma}[{\cite[Lemma 3.20]{tan_gen}}] \label{finiteboundary} %NEED
Suppose that $\rho \in \B$ and $\bu \in \Omega$ and consider the regions $\by_i, i \in \ZZ$ adjacent to $\bu$ in order round $\dd \bu$. Then away from a finite subset, the values $|\phi_{\rho}(\by_i)|$  are increasing and approach  infinity as $ i \to \infty$ in both directions. Moreover there exists a finite segment of $\partial \bu$ such that the edges adjacent to $\bu$ and not in this segment are directed towards this segment.
 \end{lemma}

\end{comment}

%%%%%%%%%%%%%%%%%%%%%

\subsection{The W-arrows}\label{Warrows}
 There is another way to orient the edges of $\T$, this time in relation to word length. For $\bu \in \Omega$, define $||\bu|| = ||u||_{(a,b)}$ for any  cyclically reduced positive word  $u  \in \bu$; clearly this is independent of the choice of $u$.  Provided $e$ is not the edge $e_0$ separating the regions $(\bf a,\bf b)$, then if $\bz,\bw$ are the regions at the two ends of $e \in \T$, put an arrow pointing from $\bz$ to $\bw$ whenever $||\bz||_{a,b} > ||\bw||_{a,b}$. 
We call these arrows,  $W$-arrows, while the previously assigned arrows defined by the condition $|\phi(\bz)| \geq |\phi(\bw)|$ we refer to as $T$-arrows (for word length and trace respectively). Clearly every edge is connected by a strictly descending path of $W$-arrows to one of the two vertices at the ends of   the edge $e_0$. \emph{We retain the notation $\vec e$ exclusively to refer to the orientation of the $T$-arrow, likewise the terms head and tail.} 

 If $e$ is an edge of $\T$, as usual   denote by $\bu(e), \bv(e)$ the regions adjacent to $e$.  Notice that  if $u\in  \bu(e), v \in \bv(e)$ are a positive generator pair, then we have  $||\bf{uv}||> ||\bf{uv^{-1}}||$ so that the $W$-arrow points from $\bf{uv}$ to $\bf{uv^{-1}}$.

  For $N \in \NN$ 
let $B((a,b), N) = \{ e \in \T  : \max \{||\bu(e)||_{a,b}, ||\bv(e)||_{a,b}\} \leq N \}$. 
The next proposition shows that for all but finitely many arrows, the $W$- and $T$- arrows point in the same direction.

  \begin{proposition} \label{wordsandtraces} There exists $N_0>0$ such that if $\vec e \notin B((a,b), N_0)$ is an oriented edge of $\T_{\rho}$ with regions $\bz,\bw$ at its tail and head respectively, then $||\bz|| > ||\bw||$.
 \end{proposition}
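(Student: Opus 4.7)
The plan is to reduce the statement to Theorem~\ref{sinktree} by enlarging the sink tree $T_F$ to contain the central edge $e_0$, and then to use the Farey mediant rule to translate between the tree structure and word lengths. The key combinatorial observation is that on the Farey tree, moving outward from $e_0$ strictly increases the word length of the third region at the far vertex, so once one aligns ``outward of $e_0$'' with ``descending toward the sink tree'' the conclusion will follow.

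First I would apply Theorem~\ref{sinktree} to enlarge $T_F$ to a finite connected subtree $T_F'$ of $\T$ that contains $e_0$ and still enjoys the strictly-descending-path property. Since $T_F'$ has only finitely many edges we may pick $N_0$ large enough that $T_F' \subseteq B((a,b),N_0)$; then $\vec e \notin B((a,b),N_0)$ forces $\vec e \notin T_F'$. For such $\vec e$, removing $\vec e$ from $\T$ disconnects it into two components, and because $T_F'$ is a connected subtree disjoint from $\vec e$, the whole of $T_F'$, and in particular $e_0$, sits on one side. By definition the strictly descending path from $\vec e$ to $T_F'$ begins by traversing $\vec e$ from tail to head and then continues inside the component containing $T_F'$, so $T_F'$ lies on the head side. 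Hence the head side of $\vec e$ is the ``inward'' side (containing $e_0$) and the tail side is the ``outward'' side.

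It then remains to read off word lengths. The region $\bz$ at the tail vertex lies entirely on the outward side of $\vec e$ (its boundary in $\T$ is a line through the tail vertex that does not cross $\vec e$), so by the Farey addition rule $||\bz|| = ||\bu(e)|| + ||\bv(e)||$; the region $\bw$ at the head vertex is the ``back mediant'' on the inward side and satisfies $||\bw|| = |\,||\bu(e)|| - ||\bv(e)||\,|$. A short check using the neighbour relation $|ps - qr|=1$ shows that two adjacent primitive classes have equal word length only when the edge is $e_0$, so for $\vec e \neq e_0$ we have $||\bu(e)|| \neq ||\bv(e)||$ and therefore $||\bz|| = ||\bu(e)|| + ||\bv(e)|| > |\,||\bu(e)|| - ||\bv(e)||\,| = ||\bw||$, as required. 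The main subtlety is the direction-matching in the middle step; once $T_F'$ is arranged to be a connected subtree containing $e_0$, this matching is forced by the tree structure of $\T$, and the remaining calculation is elementary.
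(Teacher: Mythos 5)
Your proof is correct and takes essentially the same approach as the paper: both enlarge the sink tree $T_F$ of Theorem~\ref{sinktree} to contain $e_0$, use the strictly descending path to locate $e_0$ on the head side of $\vec e$, and then exploit the fact that word length strictly decreases across any edge toward $e_0$. The only difference is presentational — the paper packages the word-length step as a contradiction via the $W$-arrows and Lemma~\ref{wake1}, whereas you argue directly with the mediant/difference computation, which is a clean and equally valid way to finish.
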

 \begin{proof} This is a general result about  attracting trees. Enlarge the finite sink tree $T_F$  of Theorem~\ref{sinktree} if necessary so that  $e_0 \in \T_F$. Choose $N_0$ large enough that $T_F(M_0) \subset B= B((a,b), N_0)$.   Then every edge not in  $B$  is  connected by a path of decreasing $T$-arrows to an edge of $T_F$.

 If the result is false, there is an edge $\vec e$ not in $B$ with regions $\bz,\bw$ at its tail and head respectively such that  $||z||_{a,b} < ||w||_{a,b}$ for $z \in \bz, w \in \bw$.    By Lemma~\ref{wake1}, every edge in $\W_{\E}(\vec e)$ is connected by a strictly descending path of $T$-arrows to the tail of $\vec e$.
On the other hand, $\vec e$ is connected by a strictly descending path of $W$-arrows to one of the two vertices at the ends of 
$e_0$. But these $W$-arrows are contained in $\W(\vec e)$ and, following on from the initial edge $e$, must  all point in the opposite direction to the $T$-arrows. Thus
one of the two vertices at the ends of 
$e_0$ is outside $B$, which is impossible.
\end{proof}
 
  \begin{corollary} \label{headsandtails} There exists $N_0 \in \NN$ such that if $\vec e$ is an edge outside $B(N_0)$, then  every edge $\vec f \in \W(\vec e)$  has head  $\bf{uv}^{-1} $ and  tail   $\bf {uv}$ whenever $u \in \bu(f), v \in \bv(f)$ are  a positive generator pair associated to $\vec f$.
 \end{corollary}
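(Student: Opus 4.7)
The plan is to combine Proposition~\ref{wordsandtraces} with a purely combinatorial identification of the $W$-arrow direction at any edge whose adjacent regions host a positive generator pair.

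First I would nail down the $W$-arrow at a single edge. Suppose $\vec f$ is any edge and $(u,v)$ is a positive generator pair with $u\in \bu(f)$, $v\in \bv(f)$. The two remaining regions meeting the two end-vertices of $f$ are precisely $\bf{uv}$ and $\bf{uv}^{-1}$, corresponding to the two possible Farey concatenations of the standard representatives. Positivity gives $||uv||_{a,b}=||u||_{a,b}+||v||_{a,b}$ with no cancellation, whereas $uv^{-1}$ cancels at the junction and, after cyclic reduction, has strictly smaller length. Hence $||\bf{uv}||>||\bf{uv}^{-1}||$, so the $W$-arrow on $f$ is forced to point from $\bf{uv}$ to $\bf{uv}^{-1}$.

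Second I would combine this with Proposition~\ref{wordsandtraces}. Take the constant $N_0$ provided there, enlarging if necessary so that the sink tree $T_F$ of Theorem~\ref{sinktree} is contained in $B((a,b),N_0)$. For any edge $\vec f\notin B((a,b),N_0)$, Proposition~\ref{wordsandtraces} says that the $T$-arrow points from the larger to the smaller word length, i.e.\ agrees with the $W$-arrow. Combined with the first step, the $T$-tail of such a $\vec f$ is $\bf{uv}$ and its $T$-head is $\bf{uv}^{-1}$.

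Third I have to show that if $\vec e\notin B(N_0)$ then every edge $\vec f\in\W_E(\vec e)$ is itself outside $B(N_0)$, so that step two applies to $\vec f$. By Lemma~\ref{wake1}, every such $\vec f$ is connected to $\vec e$ by a strictly descending $T$-path. Since $\vec e$ descends to $T_F\subset B((a,b),N_0)$, its tail side lies in the component of $\T\setminus T_F$ pointing away from $e_0$. Word lengths of the standard representatives $w_{p/q}$ grow monotonically with distance in the Farey tree from $e_0$, so the adjacent regions of any edge in $\W_E(\vec e)$ have word length at least as large as those of the tail region of $\vec e$; in particular, $\max\{||\bu(f)||_{a,b},||\bv(f)||_{a,b}\}>N_0$. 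Applying step two to each such $\vec f$ yields the corollary.

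The only real obstacle is the combinatorial fact used in step three, namely that the wake of $\vec e$ lies in the ``away from $e_0$'' component and that word length of standard representatives grows monotonically out from $e_0$ in $\T$. This is a standard feature of the Farey tessellation (the denominators $p+q$ strictly increase under Farey addition along any outward ray), so a brief argument suffices; all the analytic content is already contained in Proposition~\ref{wordsandtraces}.
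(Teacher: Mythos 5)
Your argument is correct and is exactly the route the paper intends: the corollary is stated there without proof as an immediate consequence of Proposition~\ref{wordsandtraces} together with the remark preceding it that the $W$-arrow on any edge carrying a positive pair $(u,v)$ points from $\mathbf{uv}$ to $\mathbf{uv}^{-1}$, plus the observation that the wake of an edge outside $B(N_0)$ stays outside $B(N_0)$. One small imprecision in your step three: what increases monotonically away from $e_0$ is $\max\{||\bu(f)||,||\bv(f)||\}$, not both adjacent word lengths individually (one of them can remain small, e.g.\ for edges along $\partial\mathbf{a}$), but the max is the quantity you actually use to conclude $\vec f\notin B((a,b),N_0)$, so the proof stands.
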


 \section{Results from~\cite{LX}}\label{geometry} 

In this section we  collect  the main  results from~\cite{LX} needed to prove Theorem~\ref{BQimpliesPS}.
 \subsection{The double cone lemma}\label{cone} 
 
 Suppose that $H, H'$ are hyperbolic half planes and let $\hat H$ be one of the two closed half spaces defined by $H$. By an inward (resp. outward) pointing normal to $\hat H$ we mean a normal to $H$ which points into (resp. out of) $\hat H$. If $\hat H'$ is another half space  such that $\hat H\supset  \hat H'$ and   $d(   H,   H')>0$  we say that $\hat H, \hat H'$ are \emph{properly nested}.

 \begin{lemma}\label{conelemma}   Suppose $ 0 < \a < \pi/2$. Then there exists $L_0>0$ with the following property. 
Suppose that   $H, H'$ are hyperbolic half planes defining half spaces $\hat H, \hat H'$. Let $\M$ be a line joining points $O \in H, P \in H'$ such that $\M$ is orthogonal to $\hat H'$ and makes an  angle $0 \leq \theta < \a$ with the inward pointing normal to $\hat H$.  Then  $\hat H \supset  \hat H'$ are properly nested whenever $d(O,P)> L_0$. \end{lemma}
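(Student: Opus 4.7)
The plan is to reduce the statement to an elementary Euclidean computation inside the upper half-space model of $\HHH$. After an isometry I may assume that $\M$ is the positive vertical axis with $P=(0,0,1)$, $O=(0,0,e^{d})$ where $d=d(O,P)$, and $\M$ directed from $O$ down to $P$. Since $\M\perp H'$ at $P$, the plane $H'$ becomes the Euclidean unit hemisphere centered at the origin, and $\hat H'$ (the half-space into which $\M$ continues past $P$) becomes the open Euclidean half-ball directly under $H'$.

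Next I would parametrize $H$ as a Euclidean hemisphere of radius $R$ centered at an ideal point $C=(c_{1},c_{2},0)\in\partial\HHH$. The upper half-space model is conformal, so the hyperbolic normal to $H$ at $O$ coincides with the Euclidean normal, and the angle-$\theta$ condition reads $\cos\theta=e^{d}/R$. This gives the explicit formulas
\begin{equation*}
R \;=\; \frac{e^{d}}{\cos\theta}, \qquad |C| \;=\; \sqrt{R^{2}-e^{2d}} \;=\; e^{d}\tan\theta,
\end{equation*}
where $|\cdot|$ denotes Euclidean norm. A sign check at $O$ (the normal pointing into the bounded Euclidean region under $H$ makes angle $\theta$ with the downward direction of $\M$, while the opposite normal would make angle $\pi-\theta>\pi/2$) confirms that $\hat H$ is precisely the open bounded region inside the hemisphere $H$.

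The conclusion now reduces to showing that the closed unit disk at the origin lies strictly inside the open disk of radius $R$ about $C$ on $\partial\HHH$, with positive gap. The point of the closed unit disk farthest from $C$ is at Euclidean distance $|C|+1=e^{d}\tan\theta+1$, so the required inequality is
\begin{equation*}
e^{d}\tan\theta + 1 \;<\; \frac{e^{d}}{\cos\theta}, \qquad\text{equivalently}\qquad e^{d} \;>\; \frac{1+\sin\theta}{\cos\theta}.
\end{equation*}
Since $\theta<\alpha<\pi/2$, the choice
\begin{equation*}
L_{0} \;:=\; \log\frac{1+\sin\alpha}{\cos\alpha}
\end{equation*}
makes this hold whenever $d(O,P)>L_{0}$. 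Strict Euclidean containment of the two boundary disks forces $\overline{H}\cap\overline{H'}=\emptyset$ in $\overline{\HHH}$, hence $d(H,H')>0$, and a short connectedness argument (the topological boundary of $\hat H'$ in $\overline{\HHH}$ is contained in $\hat H$) upgrades this to $\hat H\supset\hat H'$. The main thing requiring care is the consistent identification of which side of each plane is the chosen half-space; once this is pinned down, the Euclidean estimate is routine, and I do not foresee a serious obstacle.
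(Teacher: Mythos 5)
Your proof is correct, but it takes a genuinely different route from the paper's. The paper argues by contraposition and synthetic hyperbolic trigonometry: if $H$ and $H'$ met at a point $Q$, then $OPQ$ would be a triangle with angle $\pi/2$ at $P$ and angle at least $\pi/2-\theta>\pi/2-\a$ at $O$ (since the segment $OQ$ lies in $H$ and $\M$ makes angle $\pi/2-\theta$ with $H$), so $d(O,P)$ is bounded by the finite side of the limiting triangle with angles $\pi/2-\a,\pi/2,0$; this gives $L_0$ via the angle of parallelism, $\cosh L_0=1/\cos\a$. You instead normalise in the upper half-space model and reduce everything to nesting of the two Euclidean shadow disks on $\partial \HH^3$, obtaining the explicit threshold $e^{L_0}=(1+\sin\a)/\cos\a$ --- which, pleasingly, is exactly the same constant, since $\cosh L_0 = 1/\cos\a$ is equivalent to $e^{L_0}=(1+\sin\a)/\cos\a$. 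Your computation is more explicit and self-contained (and makes the "properly nested" conclusion, including $d(H,H')>0$, completely transparent via the positive gap between the boundary circles), at the cost of a model-dependent setup; the paper's argument is shorter and coordinate-free but leaves the quantitative content implicit. Two small points to tidy in your write-up: you should note that $H$ cannot be a vertical half-plane in your normalisation (a vertical plane through $O$ would have horizontal normal there, forcing $\theta=\pi/2\geq\a$, which is excluded), so the parametrisation of $H$ as a hemisphere of finite radius $R$ is legitimate; and the monotonicity of $\theta\mapsto(1+\sin\theta)/\cos\theta$ on $[0,\pi/2)$, which you use implicitly to pass from $\theta$ to $\a$, deserves a word.
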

  \begin{proof}  
  
  If this is false, then $H'$ meets $H$ in a point $Q \in \HH^3 \cup \dd \HH^3$. Then $OPQ$ is a triangle with angle $\psi = \pi/2 - \theta$ at $O$ and $\pi/2$ at $P$. Let $ L_0$ be the length of the finite side of a triangle with angles $\pi/2 - \a, \pi/2,0$. Since 
  $\psi = \pi/2 - \theta > \pi/2 - \a$ then  $d(O,P)< L_0$. Clearly from the directions of the normals, $\hat H\supset  \hat H'$ and moreover  $d(  H,    H')>0$.
     \end{proof} 
  
   \begin{corollary}\label{doubleconelemma}(\cite{LX} Lemma 3.5) 
   Suppose that $H, H'$ are hyperbolic half planes with corresponding half spaces $\hat H, \hat H'$ and let  $\M$ be a line joining points $O \in H, P \in H'$ which makes 
  angles $0 \leq \theta, \theta' \leq \a$ with the inward pointing normal to $\hat H$ and the outward pointing normal to $\hat H'$ respectively.  Then $\hat H\supset  \hat H'$ are properly nested provided  $d(O,P)> 2L_0$.
    \end{corollary}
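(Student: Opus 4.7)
The plan is to reduce Corollary~\ref{doubleconelemma} to two applications of Lemma~\ref{conelemma} by cutting $\M$ at its midpoint. Intuitively, Lemma~\ref{conelemma} already handles the case where $\M$ is orthogonal to the inner half-plane; to remove the asymmetry, I introduce an auxiliary half-plane perpendicular to $\M$ at its midpoint and nest everything through it from both sides.

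Let $M$ be the midpoint of the geodesic segment $OP \subset \M$, and let $H''$ be the totally geodesic plane through $M$ orthogonal to $\M$; write $\hat H''$ for the half-space bounded by $H''$ that contains $P$. For the first application, consider the pair $(\hat H, \hat H'')$ together with the sub-segment of $\M$ from $O$ to $M$. This sub-segment is orthogonal to $H''$ by construction, and it makes the same angle $\theta \le \a$ with the inward normal to $\hat H$ at $O$ as $\M$ does. Hence, provided $d(O,M) > L_0$, Lemma~\ref{conelemma} yields $\hat H \supset \hat H''$ properly nested.

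For the second application I swap orientations at $P$. Let $\hat K = (\hat H')^c$ and $\hat K'' = (\hat H'')^c$ denote the complementary half-spaces; the inward normal to $\hat K$ at $P$ is by definition the outward normal to $\hat H'$. The sub-segment of $\M$ from $P$ to $M$ is orthogonal to $H''$, hence to $\partial \hat K''$, and makes angle $\theta' \le \a$ with the inward normal to $\hat K$ at $P$. Provided $d(P,M) > L_0$, Lemma~\ref{conelemma} gives $\hat K \supset \hat K''$ properly nested, which on taking complements is precisely $\hat H'' \supset \hat H'$ properly nested.

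Combining: when $d(O,P) > 2L_0$ we have $d(O,M) = d(M,P) > L_0$, so both steps apply and yield a chain
\[
\hat H \;\supset\; \hat H'' \;\supset\; \hat H'
\]
of properly nested half-spaces. Transitivity of proper nesting (inclusion is transitive, and $d(H,H') \ge d(H,H'') > 0$ since $H'$ lies in the interior of $\hat H''$ which in turn lies in the interior of $\hat H$) then finishes the proof. The only subtle point, and the main piece of bookkeeping to get right, is checking in the second step that replacing $\hat H'$ by its complement correctly exchanges outward and inward normals so that the hypotheses of Lemma~\ref{conelemma} are satisfied verbatim; this is immediate from the definitions but deserves an explicit remark.
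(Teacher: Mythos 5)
Your proof is correct and follows exactly the paper's argument: introduce the plane $H''$ perpendicular to $\M$ at its midpoint, apply Lemma~\ref{conelemma} to the pairs $(H,H'')$ and $(H'',H')$, and conclude by transitivity. The paper states this in a single sentence; your write-up merely makes explicit the orientation bookkeeping (passing to complementary half-spaces at the $P$ end) that the paper leaves implicit.
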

  \begin{proof}  Let $H''$ be the plane perpendicular to $\M$ through its mid-point and apply Lemma~\ref{conelemma} to 
  $H, H''$ and $H'', H'$.
    \end{proof}

  \subsection{Generators and the amplitudes of a right angled hexagon}\label{sec:amplitude}

Let $\H$ be a right angled hexagon  with  consistently oriented sides $s_1, \ldots , s_6$ and let $\s_i$ be the complex distance between  sides $s_{i-1}, s_{i+1}$. 
  The amplitude $\Am(\s_{i-2}, \s_i, \s_{i+2})$  introduced in~\cite{Fen} VI.5, is, up to sign, an invariant  of the  triple of alternate sides $s_{i-2}, s_i, s_{i+2}$. Its importance is that  if  $\H$ is constructed as described below from a positive ordered generator pair $(u,v)$, then up to sign the amplitude relative to the three sides $\Ax U, \Ax V, \Ax U^{-1}V^{-1}$,  is the trace of the  square root of commutator ${U,V}$ and hence  independent of the choice of generators. This point was used crucially in~\cite{LX}.

  \begin{definition} \label{amplitude}  Let $\H$ be a consistently oriented right angled hexagon  with oriented sides $ s_1, \ldots ,  s_6$  and let  $\s_i$ be the complex distance  between sides $s_{i-1}, s_{i+1}$. Define the \emph{amplitude}   $\Am(\s_1,\s_3 , \s_5) = -i \sh \s_2\sh  \s_3 \sh \s_4$.
 \end{definition}
See for example~\cite{Fen} or \cite{serwolp} for a discussion of complex length and hyperbolic right-angled hexagons.

 Let $\s_{14}$ be the complex distance between the oriented lines $s_1$ and $s_4$.  Using the cosine formula in the oriented right angled pentagon with the sides $s_1, s_2, s_3, s_4, s_{14}$ (where  $s_{14}$  is the common perpendicular of $s_1$ and $s_4$, oriented from $s_1$ to $s_4$), we find  $ \ch \s_{14} = - \sh \s_2\sh \s_3$. Thus we can alternatively write the amplitude as 
 $Am(\s_1,\s_3 , \s_5) =  i \ch \s_{14} \sh \s_4$.

We now fix a choice of lift   $R \in \SL$ of the order two rotation about an oriented line using line matrices as described in \cite{Fen} V.2.  Denote the  oriented line with endpoints $\zeta, \zeta' \in \Chat$,  oriented from $\zeta$ to $\zeta'$,  by $[\zeta, \zeta']$.
The \emph{line matrix} $  R  ([\zeta, \zeta']) \in \SL$ is a choice of matrix representing the $\pi$-rotation  about $[\zeta, \zeta']$. 
If $\zeta, \zeta' \in \CC$ then  
$$  R  ([\zeta, \zeta']) =   \dfrac{i}{\zeta'-\zeta} \begin{pmatrix} \zeta + \zeta'& -2 \zeta \zeta' \cr  2 & -\zeta - \zeta'\end{pmatrix},$$
while  $$  R  ([\zeta, \infty])=   i \begin{pmatrix} 1& -2 \zeta  \cr  0 & -1\end{pmatrix}, \ \ R([\infty, \zeta' ])  =  - i \begin{pmatrix} 1& -2 \zeta  \cr  0 & -1\end{pmatrix}.$$
As shown~\cite{Fen}, this definition respects the orientation of lines and is invariant under conjugation in $\SL$.

If $R_i$ is the line matrix associated to the oriented side $s_i$ of $\H$ as above, then  $R_i^2 = -id$ and $R_iR_{i+1} = -R_{i+1} R_i$ . Moreover $R_{i-1}R_{i+1}   $ is a loxodromic which translates by complex distance $2  \s_i$  along an axis which extends  $s_i$. By  ~\cite{Fen} V.3,   $\Tr R_{i-1}R_{i+1}    =  - 2 \cosh \s_i$  and 
$\Tr R_{i -1}R_iR_{i+1}    = -2 i \sh \s_i$. These formulae can  be easily checked by letting $\zeta = e^{\s_i}$ and arranging $s_{i-1}, s_i$ and $s_{i+1}$ to be the oriented lines joining $[-1,1] , [0,\infty], [-\zeta, \zeta] $ respectively so that  
$$  R_{i-1} = \begin{pmatrix} 0 & i \cr  i & 0 \end{pmatrix},  \ R_{i } =   \begin{pmatrix} i & 0 \cr  0 & -i \end{pmatrix}, \   R_{i+1}=     \begin{pmatrix} 0 & i\zeta \cr  i/\zeta & 0 \end{pmatrix}. $$  

It follows from the above formulae, that we can alternatively define  $\Am(\s_1,\s_3 , \s_5) =-\dfrac{1}{2}\Tr (R_5  R_3  R_1)$. Moreover this expression is unchanged under even cyclic permutations and changes sign under odd ones.

 We now explain the invariance of the amplitude under change of generator.
 Suppose that $(u,v)$ is a positive ordered generator pair.  
Construct an oriented right angled hexagon $\H = \H(u,v)$ with the axes of $(U,V,  U^{-1}V^{-1})$ oriented in their natural directions, i.e. pointing in their respective translation directions, forming three alternate sides.  The orientations of the three remaining sides then follow.  We call this the \emph{standard hexagon} associated to $(u,v)$.

\begin{proposition}\label{amplitudeinvt}
Let $\H = \H(u,v)$ be the standard hexagon associated to the image of an positive ordered generator pair $(u,v)$. Let $s_2 = \Ax U, s_4 = \Ax V, s_6 = \Ax U^{-1}V^{-1} $ and label the other sides accordingly. Then up to sign, $\Am (\s_1,\s_3 , \s_5)$ is independent of the choice of $(u,v)$.
\end{proposition}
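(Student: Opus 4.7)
The plan is to derive an explicit formula expressing $\Am(\s_1,\s_3,\s_5)^2$ as a function of $\Tr[U,V]$ alone, and then appeal to the classical invariance of $\Tr[U,V]$ under changes of generating pair for $F_2$.

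First I will express each of $U$, $V$ and $UV$ as signed products of the line matrices $R_1, R_3, R_5$. The formula $\Tr R_{i-1}R_{i+1} = -2\ch\s_i$ recalled just before the proposition identifies $R_{i-1}R_{i+1}$ as a loxodromic with axis extending $s_i$ and complex translation length $2\s_i$. For the standard hexagon this axis is $\Ax U$ when $i=2$, $\Ax V$ when $i=4$, and $\Ax(U^{-1}V^{-1})$ when $i=6$, and in each case $2\s_i$ equals the complex length of the corresponding generator. Comparing axes, orientations and translation lengths, and noting that $\Tr R_1 R_3 = -\Tr U$, one obtains
\[ U \;=\; -R_1 R_3, \qquad V \;=\; -R_3 R_5, \qquad UV \;=\; -R_1 R_5 \]
as elements of $\SLTwoC$.

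Next I will compute the commutator.  Since $R_i^2 = -I$ we have $U^{-1} = -R_3 R_1$ and $V^{-1} = -R_5 R_3$, and a short telescoping calculation (collapsing the middle $R_3^2 = -I$) gives
\[ [U,V] \;=\; UVU^{-1}V^{-1} \;=\; -R_1 R_5 R_3 R_1 R_5 R_3. \]
Set $M = R_5 R_3 R_1 \in \SLTwoC$.  By cyclic invariance of trace the right-hand side has the same trace as $(R_5 R_3 R_1)^2 = M^2$, and since $\det M = 1$ the identity $\Tr(M^2) = (\Tr M)^2 - 2$ gives $\Tr[U,V] = 2 - (\Tr M)^2$.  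Because $\Am(\s_1,\s_3,\s_5) = -\tfrac{1}{2}\Tr M$ by the alternative formula noted earlier, this rearranges to
\[ \Am(\s_1,\s_3,\s_5)^2 \;=\; \tfrac{1}{4}\bigl(2 - \Tr[U,V]\bigr), \]
a quantity depending only on $\Tr[U,V]$.

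Finally, any other positive ordered generating pair $(u',v')$ is the image of $(a,b)$ under some $\phi \in \Aut(F_2)$, and it is classical (Nielsen) that every automorphism of $F_2$ sends $[a,b]$ to a conjugate of $[a,b]^{\pm 1}$.  Since trace in $\SLTwoC$ is invariant under conjugation and inversion, $\Tr[U',V'] = \Tr[U,V]$, so the right-hand side of the displayed equation is the same for every positive ordered generating pair, and $\Am$ itself is determined up to sign.  The main obstacle is the first step: carefully matching the orientations of the hexagon's sides to the translation directions of $U$, $V$ and $U^{-1}V^{-1}$ so that the identifications $U = -R_1R_3$, etc., hold in $\SLTwoC$ (not merely in $\PSLTwoC$) with the precise signs claimed; once these are settled, the commutator manipulation and trace algebra are routine.
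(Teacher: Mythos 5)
Your proposal is correct and follows essentially the same route as the paper: express $U$, $V$ as products of the line matrices, observe $[U,V]=-(R_1R_5R_3)^2$, and use $\Am=-\tfrac12\Tr(R_5R_3R_1)$ to get $\Am^2=\tfrac14(2-\Tr[U,V])$, an invariant of the generating pair. The only cosmetic difference is your sign/order convention $U=-R_1R_3$ versus the paper's $U=R_3R_1$, which is immaterial since the commutator trace is unaffected by replacing $U,V$ by $\pm U^{\pm1},\pm V^{\pm1}$.
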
  
\begin{proof}
 With $\H = \H(u,v)$ as defined in the statement, we have $R_{3} R_1 = U $ and $R_{5}R_3   = V$ so that $R_1 R_{5} = U^{-1}V^{-1}$. Hence 
$$UVU^{-1}V^{-1} = R_{3} R_1R_{5}R_3 R_1R_3 R_3 R_5 = - ( R_{3} R_1R_{5})^2.$$
On the other hand, 
$$   \Tr (R_{5} R_4R_{3}) \Tr (R_4 R_1) =  \Tr (R_{5} R_4R_{3} R_4 R_1)+ \Tr (R_{5} R_4R_{3} R_1 R_4)=-  2 \Tr R_{5} R_3R_{1}.$$
By the above, $     \Tr (R_{5} R_4R_{3}) \Tr (R_4 R_1)  = -4 i \sh  \s_4 \ch \s_{14}  = 4  \Am(\s_1,\s_3 , \s_5)$. 
Since as we have seen the trace of the commutator is an invariant of generator triples,  
it follows that so is  $\Am^2(\s_1,\s_3 , \s_5)$ and hence, up to sign, so is $\Am(\s_1,\s_3 , \s_5)$.  
 \end{proof}  

We refer to $\Am(\s_1,\s_3 , \s_5) = -i   \sh \delta_{UV}\sh  \lambda(U) \sh \lambda(V)$ as the \emph{amplitude} of $\H(u,v)$.

\subsection{Some simple observations}\label{observations}

We need a few more simple observations.

\begin{lemma} \label{cxinequality} \rm{(See~\cite{bow_mar}.)} Suppose that $\bu,\bv \in \Omega$ are adjacent to an oriented edge $\vec e$ of $\T$ with  $\bw, \bz$ being the regions at the head and  tail of $\vec e$ respectively. Then $ \Re \bigl  ( \dfrac{ \hat  \bz }{\hat \bu \hat \bv} \bigr ) \geq 1/2$, where $\hat \bz = \phi_{\rho}(\bz)$ and so on as in Section~\ref{Bowditchbackground}. \end{lemma}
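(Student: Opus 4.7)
The plan is to reduce the statement to a direct consequence of the Markoff edge relation together with the definition of the $T$-arrow orientation. First, note that the two regions $\bw, \bz$ at the ends of the edge $e$ correspond (up to conjugation) to the words $uv$ and $uv^{-1}$ for any positive generator pair $(u,v)$ with $u \in \bu, v \in \bv$. Applying the trace identity \eqref{eqn:inverse} to $X = U, Y = V^{-1}$ (and interchanging the roles), one obtains the Markoff edge relation
\begin{equation*}
\hat\bu \, \hat\bv = \hat\bw + \hat\bz.
\end{equation*}

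With this identity in hand, the inequality to prove becomes
\begin{equation*}
\Re\!\left(\frac{\hat\bz}{\hat\bw+\hat\bz}\right) \geq \frac{1}{2}.
\end{equation*}
Setting $w = \hat\bw$ and $z = \hat\bz$ and multiplying numerator and denominator by $\overline{w+z}$, one computes
\begin{equation*}
\frac{z}{w+z} \;=\; \frac{z\bar w + |z|^2}{|w+z|^2},
\end{equation*}
so the desired inequality is equivalent to
\begin{equation*}
2\Re(z\bar w) + 2|z|^2 \;\geq\; |w+z|^2 \;=\; |w|^2 + |z|^2 + 2\Re(z\bar w),
\end{equation*}
which simplifies immediately to $|z|^2 \geq |w|^2$.

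But this last inequality is exactly the definition of the $T$-arrow orientation on $\vec e$: since $\vec e$ is directed from the tail $\bz$ to the head $\bw$, we have by construction $|\hat\bz| \geq |\hat\bw|$. Substituting back completes the proof. There is no real obstacle here; the only subtlety is remembering that the pair $(\bw, \bz)$ at the two ends of the edge corresponds to the pair of traces $(\hat\bu\hat\bv - \hat\bz, \hat\bz)$ summing to $\hat\bu\hat\bv$, so that $w$ and $z$ play symmetric roles in the denominator and the arrow orientation picks out which of the two has the larger modulus.
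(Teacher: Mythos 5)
Your proposal is correct and follows essentially the same route as the paper: both use the edge relation $\hat\bw+\hat\bz=\hat\bu\hat\bv$ together with the orientation convention $|\hat\bz|\ge|\hat\bw|$, and reduce the claim to the elementary fact that if $\xi+\eta=1$ with $|\eta|\le|\xi|$ then $\Re\xi\ge 1/2$ (you simply carry out that computation explicitly). The only implicit assumption in both arguments is that $\hat\bu\hat\bv\neq 0$, which is needed for the quotient to make sense.
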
  
   \begin{proof}  
   It is easy to check that if $\xi,\eta \in \CC$ and $\xi+\eta = 1, |\eta| \leq |\xi|$, then $ \Re \xi \geq 1/2$. 
   With $\bu,\bv, \bw, \bz$ as in the statement  
we have $\hat \bz + \hat \bw = \hat \bu  \hat \bv$ and $|\hat \bz |\geq | \hat \bw|$.
Now apply the above  with  $\xi = \dfrac{  \hat \bz}{  \hat \bu \hat \bv} , \eta =  \dfrac{\hat  \bw}{  \hat \bu \hat \bv} $.   
  \end{proof} 

 \begin{lemma} \label{tanhinequality} If $\xi  \in \CC$ and $\Re \xi>0$ then $ \Re (\tanh \xi) \geq 0$. \end{lemma}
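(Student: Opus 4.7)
The cleanest approach is to rewrite $\tanh$ as a composition of elementary maps and track the image of the right half-plane. Using $\tanh \xi = \dfrac{e^{2\xi}-1}{e^{2\xi}+1}$, I would first note that when $\Re \xi > 0$ we have $|e^{2\xi}| = e^{2\Re \xi} > 1$, so the point $w = e^{2\xi}$ lies in the exterior of the closed unit disk.

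Next I would observe that the M\"obius transformation $M(w) = (w-1)/(w+1)$ maps the unit circle to the imaginary axis: indeed, $M(e^{i\theta}) = i\tan(\theta/2)$. Since $M(\infty) = 1$ lies in the open right half-plane, $M$ carries the exterior of the closed unit disk into the open right half-plane. Composing, $\tanh \xi = M(e^{2\xi})$ has strictly positive real part.

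Alternatively, if one prefers a bare-hands computation, write $\xi = x+iy$ with $x > 0$ and expand
\begin{equation*}
\tanh(x+iy) = \frac{\sinh x \cos y + i \cosh x \sin y}{\cosh x \cos y + i \sinh x \sin y},
\end{equation*}
multiply numerator and denominator by the conjugate of the denominator, and use $\cos^2 y + \sin^2 y = 1$ to obtain
\begin{equation*}
\Re \tanh(x+iy) = \frac{\sinh x \cosh x}{\cosh^2 x \cos^2 y + \sinh^2 x \sin^2 y}.
\end{equation*}
Since $\sinh x > 0$ and the denominator is strictly positive (it vanishes only if $\cos y = \sin y = 0$, which is impossible, using that $\cosh x \neq 0$), the ratio is positive. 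There is no real obstacle here; the only thing to be slightly careful about is checking that the denominator does not vanish, which is immediate.
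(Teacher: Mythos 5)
Your proposal is correct, and the bare-hands computation in its second half is exactly the paper's proof: the paper's entire argument is the one-line formula $\Re (\tanh \xi) = \dfrac{\sinh x \cosh x}{|\cosh x \cos y  + i\sinh x \sin y|^2 }$ (the printed version has $\cosh y$ and $\sinh y$ where $\cos y$ and $\sin y$ belong, so your displayed denominator $\cosh^2 x \cos^2 y + \sinh^2 x \sin^2 y$ is the corrected form of what the author intended). Your first route, factoring $\tanh$ as the M\"obius map $w \mapsto (w-1)/(w+1)$ applied to $e^{2\xi}$ and tracking the exterior of the unit disk into the right half-plane, is a genuinely different and slightly slicker argument that even yields the strict inequality $\Re(\tanh\xi)>0$, but for the purposes of the paper the direct computation is all that is needed.
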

   \begin{proof}  If $\xi = x+iy$  then $\Re (\tanh \xi) = \dfrac{\sh x \ch x}{|\ch x \ch y  + i\sh x \sh y|^2 }$.\end{proof} 
   
We will also need a  comparison of hyperbolic translation lengths and traces.  
 
 For a loxodromic element $X \in \SL$ let  $\ell(X)>0$ denote the (real) translation length and let  $\lambda (X) = (\ell(X) + i \theta(X))/2$ be \emph{half} the complex length,  so that $\Tr X = \pm 2 \cosh \lambda(X)$.
  \begin{lemma}\label{compare2} There exists  $L_0>0$ so that if 
 $\xi + i \eta \in \CC$ with $ \xi >L_0 $  then  
$\xi - \log 3 \leq \log |\cosh (\xi + i \eta)|  \leq \xi$. In particular, for $X \in \SL$ we have $e^{\ell(X)} /3\leq |\Tr X|/2 \leq e^{\ell(X)}$ whenever $\ell(X) > L_0$.  %Also $ |\sinh (\xi + i \eta)| \geq  e^{\xi}  /3$.
\end{lemma}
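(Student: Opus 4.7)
The plan is to reduce both bounds to the explicit identity
$$|\cosh(\xi+i\eta)|^2 = \cosh^2\xi - \sin^2\eta,$$
which I would derive directly by writing $\cosh(\xi+i\eta) = \cosh\xi\cos\eta + i\sinh\xi\sin\eta$ and using $\cosh^2\xi - \sinh^2\xi = 1$. With this identity in hand, the upper bound is essentially free: $|\cosh(\xi+i\eta)|^2 \leq \cosh^2\xi$ gives $|\cosh(\xi+i\eta)| \leq \cosh\xi = (e^\xi+e^{-\xi})/2 \leq e^\xi$, whence $\log|\cosh(\xi+i\eta)| \leq \xi$ for every $\xi \geq 0$ with no restriction on $\eta$.

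For the lower bound I would drop the $-\sin^2\eta$ term more carefully, using $\sin^2\eta \leq 1$ to obtain $|\cosh(\xi+i\eta)|^2 \geq \cosh^2\xi - 1 = \sinh^2\xi$, hence $|\cosh(\xi+i\eta)| \geq \sinh\xi = (e^\xi - e^{-\xi})/2$ for $\xi \geq 0$. The required inequality $\sinh\xi \geq e^\xi/3$ rearranges to $e^{2\xi} \geq 3$, i.e. $\xi \geq (\log 3)/2$, so I would set $L_0 = (\log 3)/2$ (any strictly larger value also works and gives a strict inequality if one prefers). Taking logarithms then yields $\log|\cosh(\xi+i\eta)| \geq \xi - \log 3$ for all $\xi > L_0$, uniformly in $\eta$.

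For the ``in particular'' statement, I would simply feed $\lambda(X)$ into the first part: write $\lambda(X) = \xi + i\eta$ with $\xi, \eta$ the real and imaginary parts of $\lambda(X)$, note that $|\Tr X|/2 = |\cosh\lambda(X)|$, and invoke the first inequality once $\Re\lambda(X)$ exceeds $L_0$; the translation-length threshold stated in the lemma is then obtained (enlarging $L_0$ if needed to absorb the factor of $2$ coming from the definition $\lambda = (\ell+i\theta)/2$).

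There is no real obstacle here: everything follows from the closed-form expression for $|\cosh(\xi+i\eta)|^2$. The only point requiring a moment of care is making sure $L_0$ is chosen large enough that the elementary inequality $\sinh\xi \geq e^\xi/3$ holds, which as noted happens as soon as $\xi \geq (\log 3)/2$.
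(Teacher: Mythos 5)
Your proof of the main inequality is correct and takes a mildly different route from the paper. You work from the exact identity $|\cosh(\xi+i\eta)|^2=\cosh^2\xi-\sin^2\eta$ and sandwich this between $\sinh^2\xi$ and $\cosh^2\xi$, whereas the paper factors $\cosh z=e^{z}(1+e^{-2z})/2$ and bounds $|1+e^{-2z}|/2$ between $1/3$ and $1$ using $|e^{-2z}|=e^{-2\xi}$. The two arguments are essentially equivalent and, pleasingly, produce the identical threshold: the paper's requirement $(1-e^{-2\xi})/2\geq 1/3$ is exactly your condition $e^{2\xi}\geq 3$, i.e.\ $\xi\geq(\log 3)/2$. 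Your version has the small advantage of resting on an identity rather than on a lower bound for the modulus of a sum, but there is nothing substantive to choose between them.

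The one place requiring more care is the ``in particular'' clause. From $\Tr X=\pm 2\cosh\lambda(X)$ with $\lambda(X)=(\ell(X)+i\theta(X))/2$ you get $\Re\lambda(X)=\ell(X)/2$, so the first part yields $e^{\ell(X)/2}/3\leq|\Tr X|/2\leq e^{\ell(X)/2}$ once $\ell(X)>2L_0$. Enlarging $L_0$ repairs the threshold, as you say, but it does not convert the exponent $\ell(X)/2$ into the $\ell(X)$ appearing in the stated conclusion; indeed the stated lower bound $|\Tr X|/2\geq e^{\ell(X)}/3$ fails for a loxodromic with large translation length (take $\theta=0$, so $|\Tr X|/2=\cosh(\ell/2)\approx e^{\ell/2}/2\ll e^{\ell}/3$). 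This is a slip in the statement of the lemma itself --- the paper's own proof addresses only the first sentence and never derives the ``in particular'' --- and the correct consequence is the one your argument actually produces, with $e^{\ell(X)/2}$ in place of $e^{\ell(X)}$; this still gives the exponential comparability of $|\Tr X|$ and $\ell(X)$ that is all the lemma is used for later.
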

\begin{proof}
For the right hand inequality, since $ |\cosh (\xi + i \eta)| = e^{\xi}   | (1+ e^{-2\xi -2i\eta} )|/2$ we have
$$\log |\cosh (\xi + i \eta)| = {\xi} + \log  | (1+ e^{-2\xi -2i\eta} ) |/2 \leq \xi$$  since  $| (1+ e^{-2\xi -2i\eta})  |/2 \leq 1$.

For the left hand inequality, since $\xi > L_0$ we have, choosing $L_0$ large enough, $| (1+ e^{-2\xi -2i\eta})  |/2 \geq 1/3$ so that $\log  | (1+ e^{-2\xi -2i\eta} ) |/2 \geq -\log 3$ and hence 
$\log |\cosh (\xi + i \eta)| \geq \xi -\log 3$.
%The estimate on $ |\sinh (\xi + i \eta)|$ follows similarly.
\end{proof}

\subsection{The key step}\label{keystep} 

We  now come to the  key steps from~\cite{LX}   used to prove Theorem~\ref{BQimpliesPS}. 
 
 \begin{proposition} \label{angleinequality}   (\cite{LX} Lemma 5.1)
Suppose that $\rho \in \B$ and that $ 0 < \a < \pi/2$ is given. Suppose also that as in Lemma~\ref{cxinequality}, $\bu,\bv \in \Omega$ are adjacent to an oriented edge $\vec e$ of $\T$. With $N_0$ as in Corollary~\ref{headsandtails}, suppose $ u \in \bu, v \in \bv$ are a positive generator pair and that $\max \{||u||, ||v||\} > N_0$. 
Let $\delta_{UV}$ be the complex distance between the  axes of $U = \rho(u), V = \rho(v)$, oriented in the direction of positive translation.
 Then there exists $L_1>0$ depending only on $\a$ and $\rho$ such that   $|\Im \delta_{UV}| \leq \a$ whenever $\max \{\ell(U), \ell(V)\} > L_1$.
  \end{proposition}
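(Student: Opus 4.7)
The plan is to use the amplitude invariance of Proposition~\ref{amplitudeinvt} to force $|\sh \delta_{UV}|$ to be small, and then to use Lemma~\ref{cxinequality} together with the positivity of the generator pair to pin down the correct sign of $\ch \delta_{UV}$. Amplitude invariance supplies a constant $C = C(\rho) \geq 0$ with
\[
|\sh \lambda(U)\,\sh \lambda(V)\,\sh \delta_{UV}| \,=\, C
\]
for every positive generator pair $(u,v)$. Moreover, since $\rho \in \B$, every primitive $X = \rho(x)$ has $|\Tr X| > 2$ and therefore $\ell(X) > 0$; finiteness of the primitives of bounded word length together with Fibonacci growth (Proposition~\ref{fibonacci} and Lemma~\ref{compare2}) yields a uniform $c_0 = c_0(\rho) > 0$ with $\ell(X) \geq c_0$ for every such $X$, and hence $|\sh \lambda(X)| \geq \sh(c_0/2)$. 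Assuming by symmetry of $(u,v)$ that $\ell(U) > L_1$, so that $|\sh \lambda(U)| \geq \sh(L_1/2)$, amplitude invariance then gives
\[
|\sh \delta_{UV}| \,\leq\, \frac{C}{\sh(L_1/2)\,\sh(c_0/2)},
\]
which can be made as small as we please by choosing $L_1$ large.

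Writing $\delta_{UV} = d + i\theta$ and using $|\sh(d+i\theta)|^2 = \sh^2 d + \sin^2 \theta$, smallness of $|\sh \delta_{UV}|$ forces both $\sh d$ and $|\sin \theta|$ to be small, so $\theta$ is close either to $0$ or to $\pm\pi$, with $\ch \delta_{UV}$ close to $+1$ or $-1$ respectively. The case $\theta \approx 0$ is what we want. To rule out $\theta \approx \pm\pi$, Corollary~\ref{headsandtails} identifies the tail of $\vec e$ with the region ${\bf uv}$ and the head with ${\bf uv^{-1}}$, so Lemma~\ref{cxinequality} gives $\Re\bigl(\Tr UV / (\Tr U\,\Tr V)\bigr) \geq 1/2$; combining this with the standard hexagon identity
\[
\Tr UV \,=\, 2\ch \lambda(U)\ch \lambda(V) + 2\,\ch \delta_{UV}\,\sh \lambda(U)\sh \lambda(V)
\]
yields the equivalent inequality $\Re\bigl(\ch \delta_{UV}\,\tanh \lambda(U)\tanh \lambda(V)\bigr) \geq 0$. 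In the case $\ch \delta_{UV} \approx -1$ this would force $\Re(\tanh \lambda(U)\tanh \lambda(V))$ to be nearly non-positive. However, the uniform bound $\ell(V) \geq c_0$ gives
\[
|\tan(\arg \tanh \lambda(V))| \,=\, \frac{|\sin \theta(V)|}{\sh \ell(V)} \,\leq\, \frac{1}{\sh c_0},
\]
so $\arg \tanh \lambda(V)$ stays strictly inside $(-\pi/2,\pi/2)$ with a uniform gap; and for $\ell(U) > L_1$ large, $\tanh \lambda(U)$ is close to $1$ and $\arg \tanh \lambda(U)$ close to $0$. Hence $\Re(\tanh \lambda(U)\tanh \lambda(V))$ has a uniform positive lower bound, contradicting the previous inequality for $L_1$ sufficiently large. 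Therefore $\theta$ is close to $0$, and enlarging $L_1$ once more makes $|\Im \delta_{UV}| \leq \alpha$.

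The main obstacle is this final sign argument. Because the hypothesis only bounds $\max\{\ell(U),\ell(V)\}$ rather than the minimum, one cannot directly claim that $\tanh \lambda(U)\tanh \lambda(V)$ is close to $1$; instead, the full $BQ$ conditions (rather than just Fibonacci growth) are needed to obtain the uniform lower bound $c_0$ on $\ell(V)$, and hence to control $\arg \tanh \lambda(V)$ — which is precisely what allows us to extract a definite sign for $\ch \delta_{UV}$ and so eliminate the spurious case $\delta_{UV} \approx \pm i\pi$.
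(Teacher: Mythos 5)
Your proof is correct and follows essentially the same route as the paper's: amplitude invariance (Proposition~\ref{amplitudeinvt}) plus the uniform lower bound on primitive translation lengths forces $|\sinh\delta_{UV}|$ to decay, and the hexagon cosine rule combined with Corollary~\ref{headsandtails} and Lemma~\ref{cxinequality} yields $\Re\bigl(\cosh\delta_{UV}\tanh\lambda(U)\tanh\lambda(V)\bigr)\geq 0$, which rules out $\theta\approx\pi$. If anything your final step is more explicit than the paper's one-line appeal to Lemma~\ref{tanhinequality}: you correctly note that one needs $\ell(V)\geq c_0$ uniformly (not merely $|\sinh\lambda(V)|$ bounded below) to keep $\arg\tanh\lambda(V)$ strictly inside $(-\pi/2,\pi/2)$ and thereby pin down the sign of $\cosh\delta_{UV}$.
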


    \begin{proof}  Without loss of generality, suppose that $ \ell(U) \geq  \ell(V)$. Let $\delta_{UV} = d + i \theta$. Since by assumption $\Omega(2)$ is finite and $\Tr \rho(g) \neq \pm 2$ for all $g \in \P$, there exists $c>0$ such that $|\Tr  \rho(g) \pm 2| > c$ for all $g \in \P$.  Hence $|\sh \lambda(G)|$ is uniformly bounded away from $0$ for all $g \in \P$, where $G = \rho (g)$.  By Proposition~\ref{amplitudeinvt}   
    the absolute value of the amplitude of $\H(u,v)$,  that is, $|   \sh \delta_{UV}\sh  \lambda(U) \sh \lambda(V)|$,  is independent of $(u,v)$.  Combined with Lemma~\ref{compare2}, it follows that provided that $\ell(U)> L_0$ we have
  \begin{equation}\label{distbound}
    | \sh \delta_{UV}| \leq k e^{- \ell(U)} 
  \end{equation}  
  for a  constant $k$ which depends only on the representation $\rho$.   
Since $| \sh \delta_{UV}|^2 = \ch^2 d \sin^2 \theta + \sh^2 d \cos^2 \theta$ we deduce that  $d \to 0$  and either $ \theta \to 0$ or $ \theta \to \pi$ as $\ell(U)\to \infty$.

Now  the cosine formula in $\H(u,v)$ gives
\begin{equation*}
  \cosh \d_{UV}  = \frac{\cosh \lambda{(U^{-1}V^{-1})}  - \cosh \lambda(U)  \cosh \lambda(V)  } {\sinh \lambda(U) \sinh \lambda(V)}
 \end{equation*}
and hence 
\begin{equation*}
  \cosh \d_{UV} \tanh \lambda(U)  \tanh \lambda(V)  = \frac{\cosh \lambda (VU)} { \cosh \lambda(U)  \cosh \lambda(V)  }  - 1
 \end{equation*}
which gives 
\begin{equation*}
1+   \Re  ( \cosh \d_{UV} \tanh \lambda(U)  \tanh \lambda(V)  )  = \Re \Bigl( \frac {\cosh \lambda (VU)} { \cosh \lambda(U)  \cosh \lambda(V)} \Bigr). 
 \end{equation*}

By Corollary~\ref{headsandtails} the $T$-and $W$-arrows on $\vec e$ agree. Hence $\bu \bv$ is the region at the  tail of $\vec e$ and $\bu \bv^{-1}$ the one at its head.  Thus by Lemma~\ref{cxinequality} we have  $ \Re \Bigl( \dfrac {\cosh \lambda (VU)} { \cosh \lambda(U)  \cosh \lambda(V)} \Bigr) \geq 1$ from which it follows that 
 $  \Re  ( \cosh \d_{UV} \tanh \lambda(U)  \tanh \lambda(V)  ) \geq 0$. By Lemma~\ref{tanhinequality}, $ \tanh \lambda(U) , \tanh \lambda(V)   \geq 0$  so that $\Re \cosh \d_{UV}  = \ch d \cos \theta \geq 0$  from which we deduce that $\theta \to 0$.
 This completes the proof.   \end{proof}

 \begin{proposition} \label{sephyperplanes}   (\cite{LX} Theorem 5.4)
Suppose that  $\bu,\bv \in \Omega$ are adjacent to an  edge $e$ of $\T$. Then there is a half space $\hat H$  and $L_2>0$ so that if  $\max \{ \ell(U), \ell(V)\} \geq L_2$, then for any $X,Y \in \{ U,V \}$, the half spaces $ X^{-1} \hat H \supset \hat  H \supset Y\hat  H$ are properly nested.
\end{proposition}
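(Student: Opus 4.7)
The plan is to build a single half space $\hat H$ using the near-parallel geometry of $\Ax U$ and $\Ax V$ furnished by Proposition~\ref{angleinequality}, and to verify the two nesting inclusions via the double cone lemma (Corollary~\ref{doubleconelemma}).

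Fix a small $\alpha$ with $0 < \alpha < \pi/2$, to be specified below. By Proposition~\ref{angleinequality} applied with $\alpha/3$ in place of $\alpha$, together with the amplitude bound~\eqref{distbound}, for $\max\{\ell(U), \ell(V)\}$ sufficiently large we have both $|\Im \delta_{UV}| < \alpha/3$ and $|\Re \delta_{UV}|$ arbitrarily small. Let $\M$ be the common perpendicular of $\Ax U$ and $\Ax V$ with feet $P \in \Ax U$, $Q \in \Ax V$, and let $O$ be the midpoint of the segment $[P,Q]$. At $O$, choose a unit tangent vector $\vec n$ bisecting the two directions obtained by parallel-transporting the positive translation tangent of $U$ at $P$ and of $V$ at $Q$ along $\M$ to $O$; since these two transported tangents make an angle equal to $|\Im \delta_{UV}| < \alpha/3$, the bisector $\vec n$ makes an angle less than $\alpha/3$ with each. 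Let $H$ be the hyperbolic plane through $O$ perpendicular to $\vec n$, with $\hat H$ the closed half space on the $\vec n$-side.

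To show that $\hat H \supsetneq U\hat H$ is properly nested, apply Corollary~\ref{doubleconelemma} to $H$ and $UH$ using the geodesic segment from $O$ to $UO$. Working in the upper half space model with $\Ax U$ chosen vertical, the point $O$ sits at horizontal distance of order $|\Re\delta_{UV}|$ from $\Ax U$, while $UO$ is the image of $O$ under a pure dilation by factor $e^{\ell(U)}$; a direct computation of the semicircular geodesic joining them shows that its tangent at $O$ differs from the parallel-transported translation tangent of $U$ at $O$ by an angle tending to $0$ as $\ell(U) \to \infty$ and $|\Re \delta_{UV}| \to 0$. Hence for $\ell(U)$ large enough, this tangent lies within angle $\alpha$ of the inward normal $\vec n$ to $\hat H$, and the analogous estimate applied at $UO$ controls its angle with the outward normal $-U\vec n$ of $U\hat H$. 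Since $d(O, UO) \geq \ell(U) - |\Re\delta_{UV}|$ exceeds the required threshold $2L_0$ for $\ell(U)$ large, Corollary~\ref{doubleconelemma} yields the desired proper nesting. The symmetric role of $\vec n$ makes the identical argument work with $V$ in place of $U$, giving $\hat H \supsetneq V\hat H$ properly nested. Finally, applying the group inverse to each proper inclusion $X\hat H \subsetneq \hat H$ produces $X^{-1}\hat H \supsetneq \hat H$, so the full chain $X^{-1}\hat H \supsetneq \hat H \supsetneq Y\hat H$ is properly nested for every $X, Y \in \{U, V\}$.

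The principal difficulty is the angle bookkeeping. The total angular deviation used in Corollary~\ref{doubleconelemma} collects contributions from (i) the bisector making angle less than $\alpha/3$ with each axis-tangent, (ii) the offset of $O$ from $\Ax U$ (or $\Ax V$), and (iii) the exponentially small correction coming from the finite translation length, and all three must be controlled within the single threshold $\alpha$. It is crucial that Proposition~\ref{angleinequality} delivers $\Im \delta_{UV} \to 0$ rather than $\to \pi$, since antiparallel translation directions would preclude any common bisector $\vec n$ and no single $\hat H$ could satisfy both inclusions simultaneously.
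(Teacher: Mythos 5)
Your construction differs from the paper's in one essential respect, and that difference is where a genuine gap appears. The paper does not place $H$ symmetrically at the midpoint of the common perpendicular: assuming without loss of generality that $\ell(U)\ge \ell(V)$, it takes $H$ to be the plane \emph{orthogonal to $\Ax V$} at the foot of the common perpendicular $D$ (and containing $D$). With that choice the nesting $V^{-1}\hat H\supset \hat H\supset V\hat H$ is automatic for \emph{any} loxodromic $V$, with no angle estimate and no largeness assumption on $\ell(V)$: all three planes are orthogonal to $\Ax V$ and consecutive ones are at distance exactly $\ell(V)>0$. Only the $U$-nesting then requires Proposition~\ref{angleinequality} and Corollary~\ref{doubleconelemma}, applied along $\Ax U$ itself from $Q=\Ax U\cap D$ to $UQ$, where $d(Q,UQ)=\ell(U)$ is the quantity assumed large.

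Your symmetric choice of $\hat H$ forces you to run the cone-lemma argument for $V$ as well, and there the hypothesis lets you down: only $\max\{\ell(U),\ell(V)\}$ is assumed large, so $\ell(V)$ may be small. Your justification for $U$, namely that $d(O,UO)\ge \ell(U)-|\Re\delta_{UV}|$ exceeds the threshold $2L_0$ of Corollary~\ref{doubleconelemma}, has no analogue for $V$; the assertion that ``the identical argument works with $V$ in place of $U$'' fails precisely because $d(O,VO)$ need not exceed $2L_0$, and the correction angle you describe as ``exponentially small coming from the finite translation length'' is not small for the short generator. The argument is repairable --- one can first prove a uniform lower bound $\ell_0=\ell_0(\rho)>0$ on translation lengths of primitive elements (using finiteness of $\Omega_{\rho}(m)$, since small $\ell$ forces $|\Tr|$ close to $2$) and then choose $\alpha$ so small that $2L_0(\alpha)<\ell_0$, noting that $d(O,VO)\ge\ell(V)\ge\ell_0$ and that the angle corrections at $O$ and $VO$ are controlled by $\Re\delta_{UV}\to 0$ rather than by $\ell(V)$ --- but none of this is in your write-up, and it is exactly the step the paper's asymmetric choice of $H$ is designed to make unnecessary. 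Two smaller points: $U$ acts in your normalized picture as a dilation \emph{composed with a rotation} by $\theta(U)$, not a pure dilation (your estimate survives, since only the distance to the axis matters); and the bisector makes angle $|\Im\delta_{UV}|/2$ with each transported tangent, which only helps you.
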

\begin{proof} 

Suppose for definiteness that $\ell(U) \geq \ell(V)$. Let $H$ be the hyperplane orthogonal to $\Ax V$ and containing  the common perpendicular $D$ to $\Ax U, \Ax V$. Let $\hat H$ be the half space cut off  by $H$  and containing the forward pointing unit tangent vector $\bf t_V$ to $\Ax V$ at $P = \Ax V \cap D$.   Note that  $ V^{-1} \hat H \supset \hat  H \supset V \hat  H$ are properly nested since $V$ is loxodromic and translates  $H$  disjointly from itself.   
    
 Now suppose  $Y = U$. Note that for $L$ sufficiently large, by Proposition~\ref{fibonacci}, $\ell(U) > L$ implies that $||u||_{a,b}> N_0$ with $N_0$ as in Proposition~\ref{angleinequality}. Hence
by Proposition~\ref{angleinequality}  we can choose $L= L_1(\pi/4)$ so that $|\Im \d_{UV}| \leq \pi/4$ whenever $\ell(U) \geq L$.   Let $ Q$ be the intersection point  of $  \Ax U$ with $D$ and let  $\bf t_U$ be the forward pointing unit tangent vector along $\Ax U$ at $Q$. Then $\bf t_V$  is translated by distance $\Re \d_{UV}$ and  rotated by angle $\Im \d_{UV}$ along $D$ to coincide with $\bf t_U$ at $Q$. Thus $\bf t_U$
 makes an angle  at most $\pi/4$ with the inward pointing normal $\bf n_Q$ to $\hat H$ at $Q$.  Likewise $U(\bf t_U)$ makes an angle  at most $\pi/4$ with the inward pointing normal $U(\bf n_Q)$ to $U(\hat  H)$.
 It follows by Corollary~\ref{doubleconelemma} that for  $\ell(U)$  sufficiently large, the half planes 
 $\hat H \supset U (\hat H)$  are properly nested and hence so are $U^{-1} (\hat H) \supset   \hat H$.  
This completes the proof.
\end{proof}

 \begin{proposition}\label{longwordsqgeod}  (\cite[Theorem 5.4]{LX})
 Suppose  that $(u,v)$ is a  positive generator pair such that that  $\max \{ \ell(U), \ell(V)\} >  L_2$ with $L_2 $ as in Proposition~\ref{sephyperplanes}.  %    and $|\Tr UV| \geq | \Tr UV^{-1}|$. 
 Let $\C (u,v)$ denote the set of all cyclically shortest words which 
 are products of positive powers of $u$'s and $v$'s. 
 Then the collection of broken geodesics $\{\br_{\rho}(w; (u,v)), w \in \C  (u,v)\}$ is uniformly quasigeodesic. 
\end{proposition}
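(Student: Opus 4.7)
The plan is to use Proposition~\ref{sephyperplanes} to construct, for each $w \in \C(u,v)$, a bi-infinite chain of properly nested half-spaces through which the vertices of $\br_\rho(w;(u,v))$ pass in order; the uniform separation of consecutive boundary hyperplanes then forces the quasigeodesic bound.

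Fix $w = e_1 \cdots e_n \in \C(u,v)$ and extend it cyclically to a bi-infinite word, with vertices $P_k$ satisfying $P_{k+n} = W P_k$ where $W = E_1 \cdots E_n$. After a bounded translation, I may assume the basepoint $O$ lies on $\partial \hat H$, where $\hat H$ is the half-space from Proposition~\ref{sephyperplanes}; this changes the eventual quasigeodesic constants only by a bounded amount. For $k \ge 0$ set $\hat H_k := E_1 \cdots E_k \hat H$, and extend to $k < 0$ via the cyclic equivariance rule $\hat H_{k-n} := W^{-1} \hat H_k$. Using both inclusions of Proposition~\ref{sephyperplanes}, namely $E \hat H \subset \hat H$ and $E^{-1} \hat H \supset \hat H$ (both properly nested) for $E \in \{U, V\}$, a direct induction yields $\hat H_k \supset \hat H_{k+1}$ properly for every $k \in \ZZ$, with uniform separation $d(\partial \hat H_k, \partial \hat H_{k+1}) \ge d_0 > 0$, where $d_0 := \min\{d(\partial \hat H, \partial U\hat H), d(\partial \hat H, \partial V\hat H)\}$ depends only on $(u,v)$. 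Since $O \in \partial \hat H_0$, the vertex $P_k = E_1 \cdots E_k O$ lies on $\partial \hat H_k$ for every $k$.

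For $j < k$, the geodesic segment from $P_j$ to $P_k$ enters each $\hat H_m$ with $j < m \le k$ in succession, crossing $\partial \hat H_m$ exactly once (each $\hat H_m$ is totally geodesic and convex); since consecutive crossings are at distance $\ge d_0$, summing the sub-arcs gives $d(P_j, P_k) \ge (k - j) d_0$. The complementary triangle-inequality upper bound $d(P_j, P_k) \le (k - j) D_0$, with $D_0 := \max\{d(O, UO), d(O, VO)\}$, combines with the lower bound to yield the $(K, \epsilon)$-quasigeodesic condition~\eqref{qgeod} for the broken geodesic, with constants depending only on $(u,v)$ and not on the particular $w \in \C(u,v)$. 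This gives the uniformity asserted in the proposition.

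The main obstacle is the bi-infinite nature of the chain $\{\hat H_k\}_{k \in \ZZ}$: the positive branch relies on $E \hat H \subset \hat H$ while the negative branch relies on $E^{-1} \hat H \supset \hat H$, and one must verify that these mesh consistently at the seam $k = 0$ under the cyclic equivariance rule. This is precisely the step that requires the full strength of Proposition~\ref{sephyperplanes} rather than just one of its two inclusions; once the nested chain is in place, the rest is the classical separating-hyperplane estimate.
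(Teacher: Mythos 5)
Your proof is correct and follows essentially the same route as the paper: both arguments use Proposition~\ref{sephyperplanes} to produce a properly nested chain of half-spaces through which the vertices of the broken geodesic pass in order, and then read off the lower bound $d(P_j,P_k)\geq |k-j|d_0$ from the uniform separation of consecutive boundary planes. The paper packages the nesting via triples of consecutive vertices of the form $ZX^{-1}O, ZO, ZYO$ rather than your induction with a seam at $k=0$, but this is only a difference of bookkeeping, not of substance.
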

\begin{proof}   With the notation of Proposition~\ref{sephyperplanes}, pick a basepoint $O$ in the hyperplane $H$
 and let $d$ be the minimum distance between any pair of the planes $H, U(H), V(H)$.
Label the vertices of $\br_{\rho}(w; (u,v))$ in order as $P_n, n \in \ZZ$ with $O = P_0$ and denote the image of $H$  containing $P_n$ by $H_n$.  

Any three successive vertices $P_n, P_{n+1}, P_{n+2}$ are of the form $ZX^{-1}{O}, ZO, ZYO$ for some $X,Y \in \{ U = \rho(u),V= \rho(v) \}, Z \in \rho(F_2)$. Therefore by
Proposition~\ref{sephyperplanes}  the corresponding half spaces $\hat H_n, \hat H_{n+1}, \hat H_{n+2}$ are properly nested.
It follows that each consecutive pair of half spaces in the sequence  $\ldots, \hat H_n, \hat H_{n+1}, \hat H_{n+2}, \ldots$ are properly nested
and hence that $d(P_n, P_m) \geq d(\hat H_n, \hat H_m) = |n-m|d$ which proves the result.
\end{proof}

 \section{The Bowditch condition implies primitive stable}\label{sec:BQimpliesBIP} 
 
In this section we prove Theorem~\ref{introthmA}, that a representation $\rho \co F_2 \to \SL$ satisfies the $BQ$-conditions if and only if  $ \rho$ is primitive stable.

The result in one direction is not hard, see for example~\cite{lupi}. 
 \begin{proposition}\label{PSimpliesBQ}  The condition $PS$  implies the Bowditch $BQ$-conditions.
\end{proposition}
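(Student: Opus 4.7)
The plan is to exploit the bi-infinite nature of the broken geodesic $\br_\rho(w;(u,v))$ associated to a primitive element, which automatically encodes the action of $\rho(g)$ as a translation along the quasigeodesic, together with a trace versus translation length estimate to handle the finiteness clause.

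First, I would fix $g \in \P$ with cyclically shortest representative $w = e_1 \ldots e_n$, so that $n = \|g\|_{u,v}$. By definition the broken geodesic $\br_\rho(w;(u,v))$ is bi-infinite and contains the vertex $\rho(g)^k O$ after $kn$ steps for every $k \in \ZZ$. Applying the equivalent formulation \eqref{qgeod1} of the $PS$ condition to the subword $g^k$ of the infinite reduced word $\ldots e_1 \ldots e_n \ldots$ gives
$$d(O,\rho(g)^k O) \;\geq\; K^{-1}\|g^k\|_{u,v} - \epsilon \;=\; K^{-1} k n - \epsilon.$$
Dividing by $k$ and sending $k \to \infty$ produces the key lower bound
$$\ell(\rho(g)) \;\geq\; K^{-1}\|g\|_{u,v} \;>\; 0.$$
A parabolic element has orbits growing only logarithmically in hyperbolic distance, and an elliptic element has bounded orbits; both are incompatible with the linear lower bound above. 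Hence $\rho(g)$ must be loxodromic, which immediately gives $\Tr \rho(g) \notin [-2,2]$ and establishes the first clause of \eqref{eqn:B2}.

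Next, to get the finiteness clause, I would pass to extended conjugacy classes and use the enumeration by $\Qhat$: the standard representative $w_{p/q}$ has $\|w_{p/q}\|_{a,b} = p+q$. Since $PS$ is independent of the choice of generator pair, the bound above yields $\ell(\rho(w_{p/q})) \geq K^{-1}(p+q)$ for a uniform $K$. Combining this with Lemma~\ref{compare2}, once $p+q$ is large enough that $\ell(\rho(w_{p/q})) > L_0$, one has
$$|\Tr \rho(w_{p/q})| \;\geq\; \tfrac{2}{3}\,e^{\ell(\rho(w_{p/q}))} \;\geq\; \tfrac{2}{3}\,e^{K^{-1}(p+q)},$$
which exceeds $2$ for all sufficiently large $p+q$. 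Only finitely many classes $[p/q]$ can therefore lie in $\{|\Tr \rho(g)| \leq 2\}$, and since the trace is constant on extended conjugacy classes this verifies the second clause of \eqref{eqn:B2}.

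There is no real technical obstacle here; the argument is essentially structural. The main conceptual point is recognising that the broken geodesic for a primitive word is naturally bi-infinite and already carries the $\rho(g)$-translation, so that loxodromy of all primitive images, together with a uniform linear lower bound on their translation lengths, is built into the $PS$ hypothesis. The only minor subtlety is the interpretation of the finiteness clause: because cyclic permutations and inverses give infinitely many primitive elements of $F_2$ sharing a trace, one naturally reads the statement at the level of extended conjugacy classes, which is precisely the setting in which the estimate above applies.
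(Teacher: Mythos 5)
Your proposal is correct and follows essentially the same route as the paper: quasigeodesity of the bi-infinite broken geodesic rules out elliptic and parabolic images (first clause of \eqref{eqn:B2}), and a linear lower bound on translation length in terms of word length, combined with Lemma~\ref{compare2}, forces all but finitely many classes to have large trace (second clause). The only cosmetic difference is that you extract the bound $\ell(\rho(g)) \geq K^{-1}\|g\|$ by iterating $g^k$ and taking the stable translation length, whereas the paper gets it by noting the quasigeodesic stays at bounded distance from $\Ax U$; both are valid and equivalent in substance.
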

  \begin{proof} Let $u \in \P$. If the broken geodesic $\br(u; (a,b))$ is quasigeodesic then it is neither elliptic nor parabolic, so the first condition $\Tr U \notin [-2,2]$ holds.

   If the collection of broken geodesics $\br(u; (a,b)),  u \in \P$ is  uniformly quasigeodesic then $\br(u; (a,b))$ is at a uniformly bounded distance from $\Ax U$ for each $u \in \P$.  We deduce that
  $$c' ||u||_{a,b} - \epsilon \leq d_{\HH}(O, UO) \leq c + \ell(U) $$  for uniform constants $c,c', \epsilon>0$.
Since only finitely many words have word length less than a given bound,   this implies that only finitely many elements have hyperbolic translation lengths and therefore, by Lemma~\ref{compare2},    traces, less than a give bound. \end{proof}

It remains to  prove the converse.
The following  lemma is well known.
\begin{lemma} \label{singleqgeod} 
Let $w$ be a cyclically shortest word in $F_2$ and let $\rho \co F_2 \to \SL$. Suppose that the image   $W = \rho(w)$  is loxodromic  and that  $(u,v)$ is a generator pair. Then the broken geodesic $\br_{\rho}(w; (u,v))$ is quasigeodesic with constants depending only on $\rho, w, $ and $( u, v)$.
\end{lemma}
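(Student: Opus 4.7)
The plan is to exploit the fact that the broken geodesic $\br = \br_\rho(w;(u,v))$ is invariant under the infinite cyclic group $\langle W\rangle$ acting by translation along itself. Set $n = ||w||_{u,v}$ and label the vertices of $\br$ in order as $(P_k)_{k\in\ZZ}$ with $P_0 = O$; since $w$ is cyclically shortest, no cancellations occur in its bi-infinite iteration and one has $P_{k+n} = W P_k$ for every $k\in\ZZ$. By hypothesis $W$ is loxodromic, so $\Ax W$ is a genuine geodesic on which $W$ acts as translation by $\ell(W) > 0$.

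The first step is to bound the Hausdorff distance between $\br$ and $\Ax W$. The arc $\gamma_0 \subset \br$ joining $P_0$ to $P_n$ is compact, being the concatenation of at most $n$ geodesic segments each of length at most $L_0 := \max\{d(O,UO),d(O,VO)\}$. Hence
\[
 D \;:=\; \sup_{P \in \gamma_0} d(P,\Ax W) \;<\; \infty,
\]
and by $\langle W\rangle$-invariance every point of $\br$ lies within distance $D$ of $\Ax W$.

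Now parameterize $\br$ by arclength and write $\ell_0 \leq nL_0$ for the total length of $\gamma_0$. For any $P\in\HHH$ and integer $m\geq 0$, projecting orthogonally onto $\Ax W$ yields $d(P,W^m P) \geq m\,\ell(W) - 2\,d(P,\Ax W)$. Applied to $P = \br(s)$ and combined with $\br(s+m\ell_0) = W^m \br(s)$, the triangle inequality gives, for $s<t$ with $m\ell_0 \leq t-s < (m+1)\ell_0$,
\[
 d(\br(s),\br(t)) \;\geq\; d(\br(s),W^m \br(s)) - \ell_0 \;\geq\; \frac{\ell(W)}{\ell_0}\,(t-s) \;-\; \bigl(\ell(W) + 2D + \ell_0\bigr).
\]
This is the desired lower bound $K^{-1}|t-s| - \epsilon$, with $K$ and $\epsilon$ depending only on $\ell(W), D, n$ and $L_0$, hence only on $\rho, w$ and $(u,v)$. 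The matching upper bound $d(\br(s),\br(t))\leq |t-s|$ is automatic since $\br$ is a unit-speed path built from geodesic segments.

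The only non-routine ingredient is the finiteness of $D$, which uses precisely the loxodromicity of $W$ (so that $\Ax W$ is a proper geodesic and the continuous function $P\mapsto d(P,\Ax W)$ is bounded on the compact arc $\gamma_0$); the rest is bookkeeping of constants together with the elementary estimate for the displacement of a loxodromic isometry at a point near its axis.
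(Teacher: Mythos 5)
Your proof is correct and follows essentially the same route as the paper's: both exploit the $\langle W\rangle$-periodicity of the broken path together with the linear growth of displacement under powers of a loxodromic element, plus a triangle-inequality correction for the remainder within one period. The only real difference is that your detour through the axis (the constant $D$ and the orthogonal projection) is superfluous, since $d(P,W^mP)\geq \ell(W^m)=m\,\ell(W)$ holds for \emph{every} point $P$, which is the inequality the paper applies directly.
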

\begin{proof} 
Suppose that $||w||_{(u,v)} = k$ and number the vertices $P = \rho(x)O, x \in F_2$ of $\br_{\rho}(w; (u,v))$ in order as $P_r, r \in \ZZ$ with $P_0 = O$. We have to show that there exist constants $K,\e>0$ so that  if  $n<m$ then
$$(m-n )/K - \e\leq  d(P_n,P_m) \leq K(m-n) + \e.$$

Pick $c>0$ so  that $  d(O,\rho(h)O) \leq c $ for  $h \in \{ u,v\}$. Clearly  $d(P_n,P_m) \leq c (m-n)$.
For the lower bound, write  $m -n = rk + k_1$ for $r \geq 0, 0 \leq k_1 < k$.
Then  for some cyclic permutation of $w$, say $w'$, setting  $W' = \rho(w')$ we have
$W'^r(P_n) = P_{n+rk} $ so that  $d(P_n, P_{n+rk} ) \geq r \ell(W)$. 
Thus $$    d(P_n,P_m) \geq  d(P_n,P_{n+rk} ) - d(P_{n+rk}, P_m) \geq   (m-n) \ell(W)/k -       kc - \ell(W)/k.$$
\end{proof}

 \begin{theorem}\label{BQimpliesPS}  The Bowditch $BQ$-conditions implies $PS$.
\end{theorem}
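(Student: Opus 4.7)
The reverse implication $PS \Rightarrow BQ$ is Proposition~\ref{PSimpliesBQ}, so the task is to prove $BQ \Rightarrow PS$. The plan is to exhibit, for each primitive $w$, a positive generator pair $(u,v)$ drawn from a \emph{finite} list depending only on $\rho$, such that $w \in \C(u,v)$ and Proposition~\ref{longwordsqgeod} applies to $(u,v)$, and then to transfer quasigeodesicity from $(u,v)$ back to the fixed basis $(a,b)$.

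First, I would use Theorem~\ref{sinktree} together with Proposition~\ref{fibonacci} and Lemma~\ref{compare2} to enlarge $T_F$ to a finite subtree $T^* \subset \T_\rho$ that contains the ball $B((a,b), N_0)$ from Corollary~\ref{headsandtails} and also every edge of $\T$ with $\max\{\ell(U), \ell(V)\} \leq L_2$, where $L_2$ is the constant of Proposition~\ref{sephyperplanes}. Let $N$ bound the $(a,b)$-word length of the primitives sitting at any of the finitely many vertices of $T^*$; this is finite because each vertex of $\T$ meets exactly three regions. The finitely many $w$ with $||w||_{(a,b)} \leq N$ are then dispatched individually by Lemma~\ref{singleqgeod}, and taking a maximum over this finite collection yields uniform constants.

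Now suppose $||w||_{(a,b)} > N$, so that the region $\bw$ is not adjacent to any edge of $T^*$. Since $\T$ is a tree, $\partial \bw$ lies entirely in a single component of $\T \setminus T^*$, and that component is rooted at a unique boundary edge $\vec f$ (an edge of $\T$ not in $T^*$ with one endpoint in $T^*$); this places $\bw \in \W(\vec f)$. The key combinatorial observation is that the two regions $\bu, \bv$ adjacent to $\vec f$ both meet the $T^*$-endpoint of $\vec f$, hence they come from the finite list above and satisfy $||u||_{(a,b)}, ||v||_{(a,b)} \leq N$. Applying Corollary~\ref{headsandtails} to $\vec f$ itself, I may choose $(u,v)$ to be a positive generator pair with $\bf{uv}$ at the tail of $\vec f$; the standard Farey-tree structure of the wake then yields that $w$ is a cyclically shortest positive word in $(u,v)$, i.e.\ $w \in \C(u,v)$. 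Because $\vec f \notin T^*$, I also have $\max\{\ell(U), \ell(V)\} > L_2$, so Proposition~\ref{longwordsqgeod} provides quasigeodesicity of $\br_\rho(w; (u,v))$ with constants depending only on $(u,v)$, which ranges over a finite set.

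The remaining step is a routine transfer: because each of $u, v$ has bounded $(a,b)$-word length $N$ and bounded hyperbolic displacement of $O$, the broken geodesic $\br_\rho(w; (a,b))$ refines $\br_\rho(w; (u,v))$ by inserting at most $N$ vertices per segment, each at uniformly bounded distance from an original endpoint, so quasigeodesicity persists with slightly adjusted constants. Combining with the finite case above gives uniform $(K,\epsilon)$-quasigeodesicity for all $w \in \P$, establishing $PS$. The main obstacle is the combinatorial step of the third paragraph: producing the boundary edge $\vec f$ and verifying that its adjacent regions automatically come from a finite list, which rests on the enlargement properties of $T^*$ from Theorem~\ref{sinktree} together with Corollary~\ref{headsandtails}.
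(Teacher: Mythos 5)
Your proposal is correct and follows essentially the same route as the paper's proof: enlarge the finite sink tree of Theorem~\ref{sinktree} so that outside it the $W$- and $T$-orientations agree and every edge has an adjacent axis of length $>L_2$, dispatch the finitely many short primitives with Lemma~\ref{singleqgeod}, handle every other region via the wake of one of finitely many boundary edges using Corollary~\ref{headsandtails} and Proposition~\ref{longwordsqgeod}, and finish with the change-of-generators transfer back to $(a,b)$. The only differences are cosmetic: you absorb $B((a,b),N_0)$ and the short-axis edges directly into $T^*$, where the paper instead quotes Proposition~\ref{wordsandtraces} and increases $M_0$.
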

 \begin{proof}
Choose a finite sink tree $T_F = T_F(M_0)$ as in Theorem~\ref{sinktree}.
Use Proposition~\ref{wordsandtraces} to enlarge $T_F = T_F(M_0)$ if necessary so that the $W$- and $T$-arrows coincide for every edge outside $T_F$. 
By further increasing $M_0$ if necessary we can assume that
 $|\Tr \rho(u)|> M_0$  implies $\ell(U)> \max \{ L_0, L_2\}$  with $L_0, L_2$ as in Lemma~\ref{compare2} and Proposition~\ref{longwordsqgeod} respectively.

 Suppose now that   $e \notin T_F$. Then at least one of the regions $\bu$ adjacent to $e$ has $\ell(U)> \max \{ L_0, L_2\}$ and moreover 
the $W$- and $T$-arrows on $e$ coincide. Let $\bv$ be the other region adjacent to $e$ and suppose that $u \in \bu, v \in \bv$ are a positive pair, so that  $||uv|| > ||uv^{-1}||$. Since the $W$-arrow on $e$ points the same direction as the $T$-arrow
it follows that   $|\Tr UV| \geq |\Tr UV^{-1}|$.   
For the same reason, every region in $\W(\vec e)$ corresponds to a word which is a product of positive powers of $u$'s and $v$'s. 
Thus by 
Proposition~\ref{longwordsqgeod}  the collection of all broken geodesics corresponding to regions in $\W(\vec e)$ is uniformly quasigeodesic.
 
 Since $T_F$ is finite, there are finitely many edges $\{\vec e_i,i=1, \ldots, k\}$ whose heads meet  $T_F$. Moreover every region not adjacent to an  edge in $T_F$ is in $\W(\vec e_i)$ for some $i $.

There are only finitely many regions $\bw$  adjacent to some edge of $T_F$.
By Lemma~\ref{singleqgeod},  for each  such     $\bw$ and  $w \in \bw$, the broken geodesic $\br_{\rho}(w; (a,b))$ is quasigeodesic with constants depending on $\bw$.
 
 It follows that  there is a finite set of generator pairs $\S$, such that any   $w \in F_2$ can be expressed as a word in some $(s,s' ) \in \S$  in such a way that   $\br_{\rho}(w; (s,s' ))$  is quasigeodesic with constants depending only on $(s,s' )$. For fixed $(s,s')$ each quasigeodesic  $\br_{\rho}(w; (s,s' ))$ can be replaced by a broken geodesic $\br_{\rho}(w; (a,b ))$
which is also quasigeodesic  with a change of constants depending only on $(s,s')$ and not on $w$. The total number of  replacements required involves only finitely many constants and the result follows.
\end{proof}

\section{Palindromicity and the  Bounded Intersection Property}\label{BIP}

It is  easy to prove Theorem~\ref{introthmB}, that $\rho \in \B$ implies that $\rho $ has the bounded intersection property,   using  Theorem~\ref{BQimpliesPS}.
 \begin{proposition} \label{prop:PSImpliesbounded} If a representation $\rho \co F_2 \to \SL$   is primitive stable then it satisfies $BIP$. 
 \end{proposition}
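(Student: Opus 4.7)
The plan is to combine Morse stability of quasi-geodesics in $\HH^3$ with the $\pi$-rotation symmetry intrinsic to palindromic words. The crux will be that when $w$ is palindromic in a basic pair, say $(a,b)$, and $R$ denotes the $\pi$-rotation about $\E(A,B)$, the element $R$ conjugates $W = \rho(w)$ to $W^{-1}$, so $R$ preserves $\Ax W$ setwise and restricts on $\Ax W$ to the reflection fixing the intersection point $Q = \Ax W \cap \E(A,B)$; combined with quasi-geodesic stability, this will force $Q$ to be close to $O$.

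First I would extract a Morse constant from the $PS$ hypothesis. By definition there exist $(K,\e)$ so that every broken geodesic $\br_{\rho}(w;(a,b))$, $w \in \P$, is $(K,\e)$-quasi-geodesic. Since $PS$ implies that every primitive image $W$ is loxodromic (by Proposition~\ref{PSimpliesBQ}), the $W$-invariant bi-infinite path $\br_{\rho}(w;(a,b))$ joins the two fixed points of $W$ on $\dd \HH^3$; the Morse lemma in $\HH^3$ then yields a constant $D' = D'(K,\e)$, independent of $w$, such that $\br_{\rho}(w;(a,b))$ lies within Hausdorff distance $D'$ of $\Ax W$. In particular $O$ lies within $D'$ of $\Ax W$, so I may fix a nearest point $O^* \in \Ax W$ with $d(O,O^*) \le D'$.

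Next I would verify the palindromic symmetry. Because $\E(A,B)$ is the common perpendicular to $\Ax A$ and $\Ax B$, the rotation $R$ satisfies $RAR^{-1}=A^{-1}$ and $RBR^{-1}=B^{-1}$, so $RWR^{-1} = E_1^{-1}\cdots E_n^{-1}$; the palindromic relation $E_i = E_{n+1-i}$ makes this equal to $E_n^{-1}\cdots E_1^{-1} = W^{-1}$. Hence $R$ reverses the orientation of $\Ax W$ and acts on it as reflection about $Q$. Since $R$ is an isometry of $\HH^3$ preserving $\Ax W$, the nearest point on $\Ax W$ to $RO$ is $RO^*$, with $d(RO,RO^*) = d(O,O^*) \le D'$; and the reflection property gives $d(O^*, Q) = \tfrac12 d(O^*, RO^*)$.

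The triangle inequality then yields $d(O^*, RO^*) \le 2D' + d(O,RO)$, and because $R$ is a $\pi$-rotation about $\E(A,B)$ we have $d(O,RO) = 2\,d(O, \E(A,B))$, a quantity depending only on $O$ and $\rho$. Combining,
\[
d(O,Q) \le d(O,O^*) + d(O^*, Q) \le 2D' + d(O, \E(A,B)),
\]
which is uniform in $w$. The same argument applied to the other two basic pairs $(a,ab)$ and $(b,ab)$ gives analogous bounds, and the constant $D$ of Definition~\ref{defineBIP} is the maximum of the three. The only delicate point is the palindromic identity $RWR^{-1} = W^{-1}$ and its consequence that $R$ restricts on $\Ax W$ to a reflection about $Q$; once this symmetry is in place the estimate is a short Morse-stability calculation.
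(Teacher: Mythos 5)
Your proof is correct and follows essentially the same route as the paper's: primitive stability gives uniform quasigeodesicity of the broken geodesics through $O$, hence by Morse stability every primitive axis lies within a uniform distance of $O$, which forces the palindromic axes to meet the fixed special hyperelliptic axes in bounded intervals. The only difference is in the final step, where the paper simply observes that axes uniformly close to $O$ cut the fixed hyperelliptic axes in bounded sets, while you make this explicit via the $\pi$-rotation symmetry $RWR^{-1}=W^{-1}$ and the resulting reflection about $Q$ --- a valid and slightly more self-contained way to finish.
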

   \begin{proof} The broken geodesic corresponding to any primitive element by definition passes through the basepoint $O$.
The broken geodesics $\{\br_{\rho}(u ; (a,b)) \}, u \in \P$ are by definition uniformly quasigeodesic, so each is  at uniformly bounded distance to its corresponding axis. Hence all the axes are at uniformly bounded distance to $O$ and so in particular axes corresponding to primitive palindromic elements  cut the three corresponding special hyperelliptic axes  in  bounded intervals.\end{proof}

This result is of course much more interesting once we know that all primitive elements have palindromic representatives. We make a precise statement in Proposition~\ref{uniquepalindromes}.
In Theorem~\ref{direct} we then give a  direct proof that $\rho \in \B$ implies that $\rho $ has the bounded intersection property.

\subsection{Generators and palindromicity}\label{genpalin}
Let $\EE = \{0/1,1/0, 1/1\}$ and define a map $\bb \co \hat \QQ \to \EE$  by $\psi(p/q) = \bar p /\bar q$, where   $\bar p ,\bar q$ are the  mod 2 representatives of $p,q$ in $\{0,1\}$. We refer to $\psi(p/q) $ as the mod 2  equivalence class of $p/q$. 
Say $p/q \in \Qhat $ is of type $\w \in \EE$ if  $\psi(p/q) = \w$. Say a generator $u \in F_2$ is of type $\w$ if $u \in [p/q]$ and $p/q$ is of type $\w$; likewise a generator pair $(u,v)$  is type $(\w, \w') $  if  $u, v$ are of types $\w, \w'$ respectively. 
As in Section~\ref{sec:BIP}, we fix once and for all a generator pair $(a,b)$ and identify $a$ with $0/1$, $b$ with $1/0 $ and $ab$ with $1/1$. The \emph{basic generator pairs} are the three (unordered) generator pairs $(a,b)$, $(a,ab)$ and $(b,ab)$ corresponding to $(0/1,1/0 )$, $(0/1,1/1 )$ and $(1/0,1/1 )$ respectively.  (Here the order $ba$ or $ab$ is not important but fixed.) 
For $\w,\w' \in \EE$ we say $u$ is palindromic with respect to $(\w,\w'),  \w \neq \w'$  if it is palindromic when rewritten in terms of the basic pair of generators corresponding to $(\w,\w')$; equally we say that a generator pair  $(u,v)$  is cyclically shortest (respectively palindromic with respect to the pair  $(\w,\w')$) if  each of $u,v$  have the same property. We refer to a generator pair  $(u,v)$ which is palindromic with respect to some pair of generators, as a \emph{palindromic pair}. Finally, say a generator pair $(u,v)$ is conjugate to a pair $(u',v')$ if there exists $g  \in F_2$ such that 
 $gug^{-1} = u'$ and $gvg^{-1} = v'$.

 \begin{proposition} \label{uniquepalindromes}
  If $u \in \P$  is positive and of type $\w \in \EE$, then,  for each $\w' \neq \w$,  there is exactly one conjugate generator $u'$ which  is positive and palindromic with respect to $(\w,\w')$. 
If $(u,v)$ is a  positive generator pair of type $(\w,\w')$, then  there is exactly one conjugate generator pair $(u',v')$ which  is positive and palindromic with respect to $(\w,\w')$.  
  \end{proposition}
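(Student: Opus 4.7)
The plan is to prove the statement for generator pairs by induction on the distance in the Farey tree to the central triangle whose three edges correspond to the basic pairs $(a,b), (a,ab), (b,ab)$. The single-generator statement then follows: each positive primitive of type $\w$ belongs to some positive generator pair of type $(\w, \w')$, and a palindromic conjugate pair supplies a palindromic conjugate of each member.

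For uniqueness, the key observation is that an individual primitive $p$ admits at most one positive palindromic cyclic representative in a given basic alphabet: two distinct such representatives would yield a nontrivial rotational symmetry of the cyclic word, forcing $p$ to be a proper power---which no primitive element is. Now if $(u_1', v_1') = g_1 (u,v) g_1^{-1}$ and $(u_2', v_2') = g_2 (u,v) g_2^{-1}$ are two positive palindromic conjugate pairs, then $u_1' = u_2'$ and $v_1' = v_2'$ by the above, so $g_2 g_1^{-1}$ centralizes $\langle u_1', v_1' \rangle = F_2$ and hence equals $1$, giving $g_1 = g_2$.

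For existence, the base cases are the three basic pairs themselves, each of which is trivially palindromic in its own basic alphabet since both members are single letters there. For the inductive step, a non-basic positive generator pair $(u,v)$ corresponds to a Farey edge whose Farey-neighbor closer to the center represents a shorter pair $(u, v')$ with $\|v'\| < \|v\|$ and $v = v' u^{\pm 1}$ up to a cyclic shift. The type of $(u, v')$ is $(\w, \w'')$ with $\w''$ the third element of $\EE$, so it corresponds to a different basic alphabet. By the induction hypothesis, $(u, v')$ has a unique positive palindromic conjugate $(\tilde u, \tilde v') = \tilde g(u, v')\tilde g^{-1}$ in the basic alphabet for $(\w, \w'')$.

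The palindromic conjugate of $(u,v)$ is then sought in the form $h \tilde g (u,v) \tilde g^{-1} h^{-1} = (h \tilde u h^{-1}, h \tilde v' \tilde u^{\pm 1} h^{-1})$, for a correcting element $h$ chosen so that the result is palindromic in the basic alphabet for $(\w, \w')$. The main obstacle is verifying that such an $h$ exists; this amounts to tracking how the ``center'' of the palindrome $\tilde v'$ shifts when the word is rewritten in the new basic alphabet and appended with $\tilde u^{\pm 1}$. A case-by-case analysis over the three possible transitions between basic alphabets, using the explicit recursive form of palindromic representatives supplied by the IH, shows that the required $h$ exists and is unique (in fact a power of $\tilde u$), which closes the induction and establishes existence.
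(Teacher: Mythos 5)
Your uniqueness argument is essentially the paper's: two positive palindromic representatives of the same conjugacy class are cyclic permutations of one another, and distinct centres of symmetry give two reflections of the necklace whose composition is a nontrivial rotation (nontrivial because a palindrome in the relevant basic alphabet necessarily has odd length there), contradicting primitivity. The paper phrases the same fact as the bi-infinite periodic word acquiring a period shorter than $||u||$. That part is sound, as is your reduction of the single-generator statement to the pair statement.

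The existence argument, however, has a genuine gap exactly where the real work lies. Because you induct one Farey edge at a time, the parent pair $(u,v')$ has type $(\w,\w'')$ with $\w''$ the third element of $\EE$, so the induction hypothesis hands you a pair palindromic in a \emph{different} basic alphabet from the target $(\w,\w')$. You then posit a correcting conjugator $h$ making $h\tilde u h^{-1}$ and $h\tilde v'\tilde u^{\pm1}h^{-1}$ simultaneously palindromic in the new alphabet, and defer its existence to ``a case-by-case analysis.'' But this is the entire content of the proposition: by the uniqueness you just established, $\tilde u$ has exactly one conjugate that is positive and palindromic with respect to $(\w,\w')$, so $h$ is forced up to the centralizer of $\tilde u$; the nontrivial claim is that some such forced $h$ simultaneously works for the second coordinate, and nothing in your write-up establishes this compatibility. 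The paper's induction is organised precisely to avoid ever changing alphabets mid-step: it inducts on the \emph{coloured level}, passing only between consecutive edges of the same colour (hence the same type pair $(\w,\w')$), and between two such edges the descending path runs along the boundary of a single complementary region, where the explicit labels $v,u,u^2v,u^2vuv,\ldots,u^2v(uv)^n,\ldots$ show that consecutive pairs are simultaneous conjugates of $(uv(uv)^ku,\,uv(uv)^{k+1}u)$ --- words that are automatically palindromic in any alphabet in which $u$ and $v$ both are. Either adopt that route or actually carry out the three-case computation you invoke; as written, the inductive step is an assertion rather than a proof.
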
 
\begin{proof} 
We begin by proving  the existence part of the second statement.  Observe that the edges of the Farey tree $\T$ may be divided into three classes, depending on the mod two equivalence classes of the generators labelling the neighbouring regions. 
In this way we may assign colours $r,g,b$ to the pairs 
$(0/1,1/0); (0/1, 1/1 ); (1/0, 1/1 )$ respectively and extend to a map $col$ from edges to $\{r,g,b\}$, see Figure~\ref{fig:colouredtree}. Note that no two edges of the same colour are adjacent, and that the colours round the boundary of each complementary region alternate.

\begin{figure}[ht]
\includegraphics[width=7.5cm]{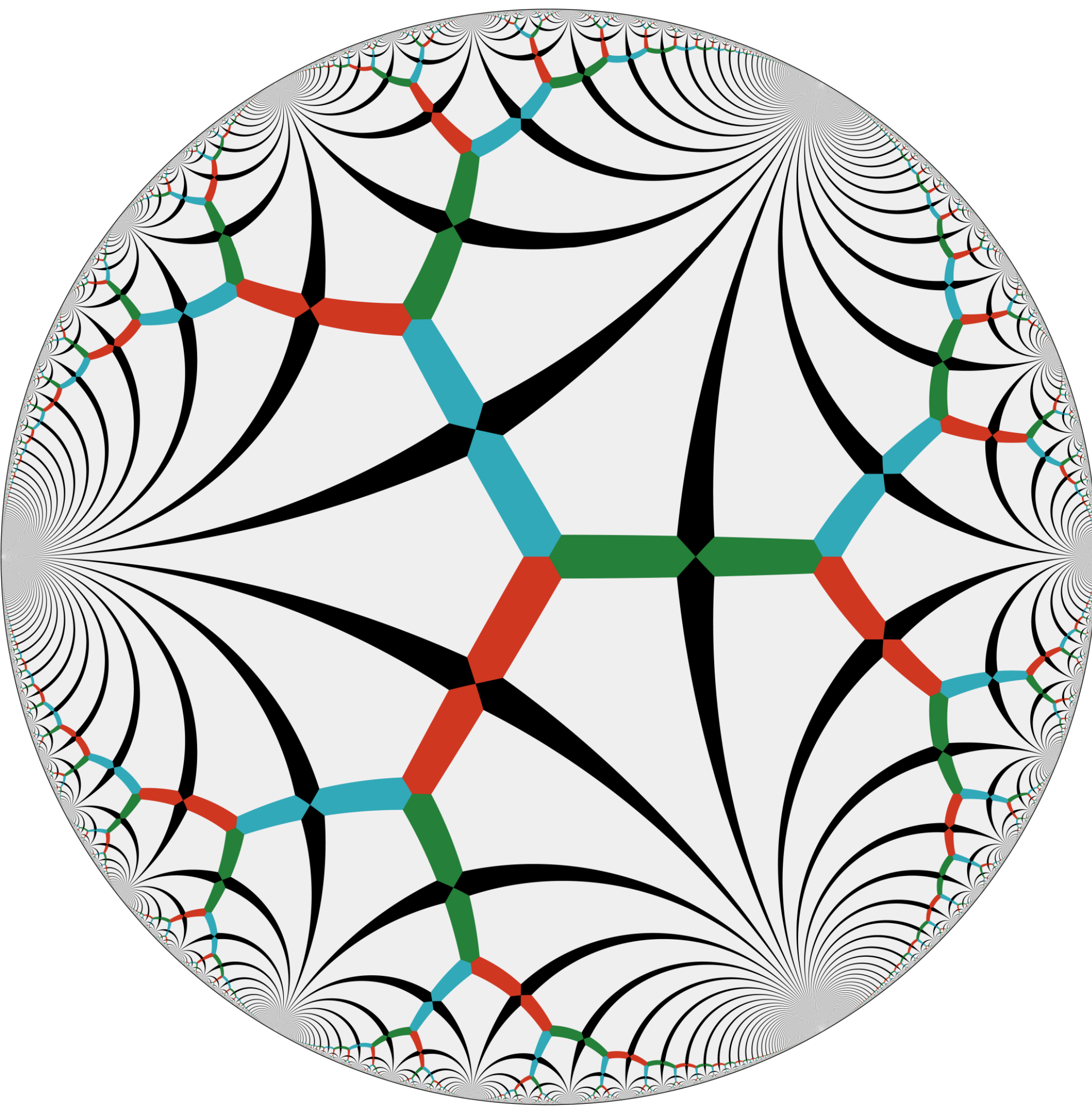}
\caption{The coloured Farey tree. The colours round the boundary of each complementary region alternate. The picture is a conjugated version of the one in Figure~\ref{fig:farey}, arranged so as to highlight the three-fold symmetry between $(a,b,ab)$. Image courtesy of Roice Nelson.}\label{fig:colouredtree}
\end{figure}

As usual let $e_0$ be the edge of $\T$ with adjacent regions labelled by $(\bf a, \bf b)$ and let $ q^+(e_0)$ and $ q^-(e_0)$ denote the vertices at the two ends of $e_0$, chosen so that the neighbouring regions are $(\bf a, \bf b, \bf {ab})$ and $(\bf a,\bf b, \bf {ab}^{-1})$ respectively. 
Removing either of these two vertices disconnects $\T$. We deal first with 
the subtree  $\T^+$  consisting of the connected component of $\T \setminus \{q^-(e_0)\}$ which contains $q^+(e_0)$. Note that the regions adjacent to all edges of $\T^+$ correspond to non-negative fractions.  

Let $e$ be a given edge of $\T^+$ and let $q^+(e)$ denote the vertex of $e$ furthest from $q^-(e_0)$.    Let $\g = \g(e)$ be the  unique shortest edge path joining $q^+(e)$ to $q^-(e_0)$, hence including both $e$ and $e_0$. The \emph{coloured level} of $e$, denoted $col.lev(e)$,  is the number of edges $e'$  including $e$ itself in $\g(e)$  with $col(e') =col(e)$. 
Note that $\g(e)$ necessarily includes $e_0$, and, provided $ e \neq e_0$, one or other of the two edges emanating from $q^+(e_0)$ other than $e_0$.  Thus $col.lev(e) = 1$ for all three edges meeting  $q^+(e_0)$ while for all other edges of $\T^+$ we have $col.lev(e) > 1$.  

Now suppose that $e$ is the edge of $\T^+$ whose neighbouring regions are labelled by the given generator pair $(u,v)$.
The proof will be by induction on  $col.lev(e)$.

Suppose first $col.lev(e)=1$. If $e = e_0$ the result is clearly true, since the pair $(a,b)$  is palindromic with respect to itself.
The other two edges emanating from $q^+(e_0)$ have neighbouring regions corresponding to the base pairs $(a,ab)$ and $(ab,b)$, each of which pair is palindromic  with respect to itself, proving the claim.

Suppose the result is proved for all edges of coloured level $k \geq 1$. Let $e$ be an edge  whose adjacent generators are of type 
$(\eta, \eta')$. Suppose that $col(e) =c$ and let  $e'$ be the next  edge of $\g$   with $col(e') = c$ along the path $\g(e)$ from $q^+(e)$ to $q^-(e_0)$.  (Note that such $e'$ always exists  since $k+1\geq 2$.)
By the induction hypothesis the  standard generator pair  $(u,v)$ adjacent to $e'$ is conjugate to a positive pair   $(u',v')$ which is palindromic of the same type   $(\eta, \eta')$. 
 
Let $q^+(e')$ be the vertex of $e'$ closest to $e$, so that 
the subpath path $\g'$ of $ \g$ from $q^+(e')$ to $q^-(e)$ contains no other edges of  colour $c$, where $q^-(e)$ is the vertex of $e$ other than $q^+(e)$. Since there cannot be two adjacent edges of the same colour, the edges of $\g'$ must alternate between the two other colours.  This implies (see Figure~\ref{fig:colouredtree}) that $\g'$ forms part of the boundary of a complementary region $R$ of $\T^+$. Moreover the third edge at each vertex along $\dd R$ (that is, the one which is not contained in $\dd R$),  is coloured $c$.

\begin{figure}
\includegraphics[width=7.5cm]{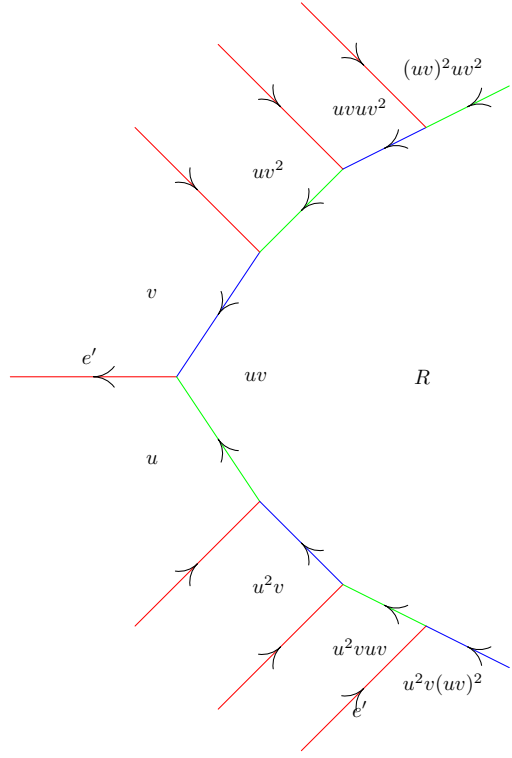}
\caption{Labels of regions round $\dd \R$ showing the $W$-arrows. Note that labels are concatenated in anticlockwise order round the boundary circle.}\label {fig:regionboundary}
\end{figure}

 Without loss of generality, suppose that $u$ is before $v$ in the anti-clockwise order round $\dd \mathbb D$.
  Then the generator associated to $R$ is $uv$. 
  Since $(u,v)$ is a  standard positive pair, moving in anticlockwise order around $\dd \R$ starting from $v$, successive regions have  labels  $$v,u, u^2v, u^2vuv, \ldots, u^2v(uv)^n, \ldots ,$$ see Figure~\ref{fig:regionboundary}. Any successive pair, in particular the pair adjacent to $e$, can be simultaneously conjugated to the form $(u v(uv)^ku, (uv(uv)^{k+1}u)$ for some $k \geq 0$.  Since by hypothesis the generator pair $(u,v)$   is conjugate to a pair $(u',v')$ palindromic with respect to  $(\eta, \eta')$, so is $(u v(uv)^ku, (uv(uv)^{k+1}u)$.
 
 Similarly, the  regions moving clockwise around $\dd R$ starting from $u$ have standard labels   $u,v, uv^2, uvuv^2, \ldots, (uv)^nuv^2, \ldots $. Thus any successive pair can be simultaneously conjugated into the form $(v(uv)^kuv , v(uv)^{k+1}uv)$ for some $k \geq 0$  which 
 is likewise conjugate to a pair palindromic with respect to  $(\eta, \eta')$.

By the same  argument for the tree $ \T^-$ consisting of the connected component of $\T \setminus \{q^+(e_0)\}$ which contains $q^-(e_0)$ we arrive at the statement that the generators associated to each edge  of $ \T^-$ can be written in a form which is palindromic with respect to one of the three generator pairs associated to the edges emanating from $q^-(e_0)$, that is,
$(a, b^{-1})$, $(a, b^{-1}a)$ or   $(b^{-1}a, b^{-1})$. The first pair is obviously palindromic with respect to $(a, b^{-1})$. 
Noting that   $b^{-1}a = (b^{-1}a^{-1})a^2$ which is conjugate to the word $a(b^{-1}a^{-1})a $ palindromic with respect to $(a,ab)$, and  that $b^{-1}a =  b^{-1}  (ab)  b^{-1}  $ which is   palindromic with respect to $(b,ab)$,
the result follows.
 
Now we prove the existence part of the first claim. Suppose that $u \in \P$ is of type $\w \in \EE$ and that $\w' \neq \w$. Choose a generator $v$ of type $\w'$ so that $(u,v)$ is a positive generator pair. By the above there is a conjugate pair $(u',v')$ palindromic with respect to $(\w,\w')$ and $u'$ is a generator as required. 

To see that $u'$ is unique, 
suppose that cyclically shortest positive primitive elements $u$ and $u'$ are in the same  conjugacy class and are both palindromic with respect to the same pair of generators, which we may as well take to be $\{0/1,1/0\}$. Notice that $u$ necessarily has odd length, for otherwise the exponents of $a$ and $b$ are both even. 

Let $u = e_{r}  \ldots e_{1} f   e_{1} \ldots e_{r}$ and  suppose that $f' = e_{k}$ is the centre point about which $u'$ is palindromic  for some $1 \leq k \leq r$.   Then 
$\ldots uu \ldots $ is periodic with minimal period of length $2r+1$ and contains the subword $$  e_{r}  \ldots e_{1} f   e_{1} \ldots e_{{k-1}}f' e_{{k-1}} \ldots e_{1}  f e_{1}\ldots e_{r}$$    so after $f   e_{1} \ldots e_{{k-1}}f' e_{{k-1}} \ldots e_{1}$ the sequence repeats. Since this subword has length $2k<2r+1$ this contradiction proves the result.   

The claimed uniqueness of generator pairs follows immediately.
\end{proof}

 \subsection{Direct proof of Theorem~\ref{introthmB}}\label{sec:direct}
It may also be of interest to give a  direct proof that $\rho \in \B$ implies that $\rho $ has the bounded intersection property. 
Theorem~\ref{direct} below is a simplified version of the proof of this result from~\cite{serPS}. It is based on estimating the distance between  pairs of palindromic axes along their common perpendicular. We use the estimate~\eqref{distbound} derived from  the invariance of the amplitude (up to sign) under change of generators to improve the corresponding estimate  in Proposition 4.6 in~\cite{serPS}.

\begin{theorem}\label{direct}(Direct proof of Theorem~\ref{introthmB}.)
If $\rho \in \B$ then $\rho $ has the bounded intersection property. 
\end{theorem}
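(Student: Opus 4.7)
The plan is to estimate the distance from $O$ to $P_w := \Ax W \cap \E(A,B)$ uniformly over all cyclically shortest positive palindromic primitive $w$ in the basic pair $(a,b)$; the other two basic pairs are treated identically by symmetry in view of Proposition~\ref{uniquepalindromes}.

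First I would dispose of short palindromes. For $\rho \in \B$ the set $\Omega_\rho(M)$ is finite for each $M \geq 2$ (Lemma~\ref{connected}), and Lemma~\ref{compare2} converts any bound on $|\Tr W|$ into a bound on $\ell(W)$; hence only finitely many palindromic $w$ have $\ell(W)$ below any given threshold. For these finitely many $w$, the intersection $P_w$ lies in a compact subset of $\E(A,B)$, so $d(O, P_w)$ is trivially bounded.

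The substantive case is $\ell(W)$ large. Here I would proceed geometrically by telescoping along a path in the Farey tree. Let $T_F$ be the sink tree of Theorem~\ref{sinktree}, enlarged to contain the edge $e_0$ separating $\mathbf{a}$ and $\mathbf{b}$. The region $\bw$ lies in the wake of some edge on the boundary of $T_F$. Choose a path of adjacent edges in $\T$ out from $e_0$ toward $\bw$, giving a sequence of positive generator pairs $(u_i, v_i)$, with $(u_0, v_0) = (a,b)$. At each step, the amplitude invariance (Proposition~\ref{amplitudeinvt}) combined with Lemma~\ref{compare2} yields~\eqref{distbound}: $|\sh \delta_{U_iV_i}| \leq k e^{-\ell(U_i)}$. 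By Lemma~\ref{fibonacciwake} the translation lengths $\ell(U_i)$ grow exponentially along the path, so these estimates sum to a convergent geometric series, giving a uniform bound on the cumulative displacement of the hyperelliptic axes $\E(U_i, V_i)$ relative to $\E(A,B)$. The common perpendiculars $\E(U_i, V_i)$ thus nest into a fixed bounded region containing $\Ax W$, and the perpendicularity $\Ax W \perp \E(A,B)$ at $P_w$ (which holds since $w$ is palindromic in $(a,b)$) forces $P_w$ into a uniform neighbourhood of $O$.

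The main obstacle is converting the trace bound $|\sh \delta_{U_i V_i}| \leq k e^{-\ell(U_i)}$ into a rigorous geometric bound on the displacement of the hyperelliptic axes along the path, and showing the resulting sum is uniform in $w$. This is precisely where the improvement over~\cite{serPS} enters: the strong exponential decay of~\eqref{distbound} with rate $\ell(U_i)$---rather than a weaker rate depending only on word length---suffices to dominate the combinatorics of the path and produce the universal constant $D$.
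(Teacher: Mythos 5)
Your outline assembles the right ingredients (finiteness of the short cases, the amplitude estimate \eqref{distbound}, Fibonacci growth, and a telescoping sum), but the telescoping step as you describe it does not work, and the missing idea is exactly the one the paper's argument is built around. You propose to sum ``the cumulative displacement of the hyperelliptic axes $\E(U_i,V_i)$ relative to $\E(A,B)$'' along a path of generator pairs running out from $e_0$ towards $\bw$. The estimate \eqref{distbound}, however, bounds $|\sh \delta_{U_iV_i}|$, i.e.\ the complex distance between $\Ax U_i$ and $\Ax V_i$ measured along \emph{their own} common perpendicular; it says nothing about how the line $\E(U_i,V_i)$ sits relative to $\E(U_{i+1},V_{i+1})$ or to $\E(A,B)$. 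Knowing that the two axes of each successive pair are very close to one another does not prevent the whole configuration from drifting off to infinity --- ruling out exactly such drift is the content of the theorem --- so the claim that the common perpendiculars ``nest into a fixed bounded region containing $\Ax W$'' is unsupported, and the closing appeal to $\Ax W \perp \E(A,B)$ does no work.

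The paper's proof repairs this by arranging that every step is measured along one and the same line. Starting from the region $\bu_1=\bu$ of the given palindrome and following the strictly descending path of $T$-arrows down to the sink tree $T_F$ (Theorem~\ref{sinktree}), it extracts a chain of regions $\bu_1,\ldots,\bu_k$ in which consecutive regions are neighbours whose mod $2$ types alternate between $\w$ and $\w'$; by Proposition~\ref{uniquepalindromes} each consecutive pair has representatives $(u_i,u_{i+1})$ forming a positive generator pair palindromic with respect to the \emph{same} basic pair $(\w,\w')$. Hence every axis $\Ax U_i$ meets the single fixed hyperelliptic axis $\E_{\w,\w'}$ orthogonally in a point $P_i$, the common perpendicular of $\Ax U_i$ and $\Ax U_{i+1}$ is a segment of $\E_{\w,\w'}$ itself, and \eqref{distbound} applied to the generator pair $(U_i,U_{i+1})$ gives $d(P_i,P_{i+1})\leq O(e^{-\ell(U_i)})$ directly as a distance along that fixed line. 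Lemma~\ref{fibonacciwake} then gives $\ell(U_i)\geq c(k-i)-\log 2$ off a bounded exceptional set (linear growth of $\ell(U_i)$ suffices; exponential growth is neither needed nor claimed), and the telescoping sum bounds $d(P_1,P_k)$ with $\bu_k$ lying in the finite set $\Omega(T_F)$. Without this device of a chain of palindromes all perpendicular to one fixed hyperelliptic axis, your estimate has no line to telescope along, and the argument does not close.
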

\begin{proof}

Assume that  $\rho \in \B$ and choose $M_0 \geq 2$ and a finite connected non-empty subtree tree $T_F$ of $\T$ as in Theorem~\ref{sinktree}. Let $\Omega(T_F) $ be the set of regions $\bu \in \Omega$ such that $\bu$ is adjacent to an edge of $T_F$.
By enlarging $T_F$ if necessary, we can ensure that every region in $\Omega(2)$ is adjacent to some edge of $T_F$.
In addition, since there are only finitely many possible pairs of elements of $\Omega(2)$, we may yet further enlarge $T_F$ so that  no edge outside $T_F$ is adjacent to a region  in $\Omega(2)$ on both sides.

 Suppose the generator $u = u_1$ is palindromic with respect $ \w$ and  that $\w' \neq \w$. Without loss of generality, we may take $u$ positive.  Let $\E = \E_{\w,\w'}$ be the corresponding special hyperelliptic axis.
Let $\Xi$ denote the set of axes corresponding to palindromic representatives of $\bv \in \Omega(T_F)$ which are of types either $\w$ or $\w'$. It is sufficient to see that   $\Ax U$ meets $\E$ at a uniformly bounded distance to one of the finitely many axes in $\Xi$.

 If $\bu_1 \in \Omega(T_F)$ there is nothing to prove, so suppose that $\bu_1 \notin \Omega(T_F)$.    Choose an  oriented edge $\vec e_1$  in $\dd \bu_1$.  Then there is a strictly descending path $\beta$ of $T$-arrows   $\vec e_1, \ldots, \vec e_n$ so that  the head of $\vec e_n$ meets an edge in $T_F$, and this is the first edge in $\b$ with this properly.
  We claim that  there is a sequence of positive generators
$u_1 = u, u_2, \ldots,  u_k \in \P$ such that for $i = 1, \ldots, k-1$:
\begin{enumerate}
\item $(\bu_i, \bu_{i+1})$ are neighbours adjacent to an edge  of $\beta$.
\item $u_i \in \bu_i, u_{i+1} \in \bu_{i+1}$ and 
$(u_i, u_{i+1})$   is a positive generator pair palindromic  with respect to $(\eta, \eta')$.
\item $\bu_k \in \Omega(T_F)$  but  $\bu_i \notin \Omega(T_F),   1 \leq i <k$.
 \end{enumerate}

Suppose that $u_1 , \ldots, u_i$ have been constructed with   properties (1) and (2) with $i \geq 1$ and that $\bu_i \notin \Omega(T_F)$. The path $\b$ travels round $\dd \bu_i$,  eventually leaving it along an arrow $\vec e$ which points out of $\dd \bu_i$. If $\bu_i$ is of type $\w$ (respectively $\w'$) then of the two regions adjacent to  $\Vec e$, one, $\bu'$ say, is of type $\w'$ (respectively $\w$). Set $\bu_{i+1}= \bu'$ and choose $u_{i+1} \in \bu_{i+1}$ so that   $(u_i, u_{i+1})$ is positive and palindromic with respect to $(\w, \w')$. (Notice that we are using the uniqueness of the palindromic form for $u_i$, in other words if $(u_{i-1}, u_{i})$ is the positive  palindromic pair associated to the regions 
 $(\bu_{i-1}, \bu_{i})$ then  $(u_{i}, u_{i+1})$ is the positive  palindromic pair associated to the regions 
 $(\bu_{i}, \bu_{i+1})$.)   If $\bu_{i+1} \in \Omega(T_F)$ we are done, otherwise continue as before.  Since $\b$ eventually lands on an edge of $T_F$, the process terminates. This proves the claim.

 Since $(u_i, u_{i+1})$ are palindromic  with respect to $(\eta, \eta')$, the axes $\Ax U_i, \Ax U_{i+1}$ are orthogonal to the hyperelliptic axis $\E_{\w,\w'}$  and hence Equation~\eqref{distbound}   gives $d(\Ax U_i, \Ax U_{i+1}) \leq O(e^{-\ell(U_i)}),   1 \leq i <k$. 

Now let $\Vec e$ be the oriented edge between $\bu_{k-1}, \bu_k$   and let  $\W(\Vec e)$ be its wake.  Then since the edge between  $\bu_i, \bu_{i+1}$ is always oriented towards $\Vec e$, we see that   $\bu_i \in \W(\Vec e), 0 \leq i \leq k$. 
  Let $F_{\vec {e}}$ be the 
 Fibonacci function   on $\W(\Vec e)$   defined  immediately above Lemma~\ref{fibonacciwake}. 
It is not hard to see that for $0 \leq i \leq k$ we have $F_{\vec {e}}(\bu_i) \geq k-i$.
By construction, $\bu_{k-1} \notin \Omega(T_F) $ so that, by our assumption on $T_F$, we have   $\bu_{k-1} \notin \Omega(2)$.  
Moreover  by connectivity of $T_F$, no edge in $\W_{\E}(\Vec e)$  is in $T_F$ and hence none of these edges is adjacent on both sides to regions  in $\Omega(2)$. 
Thus by Lemma~\ref{fibonacciwake}, there exist $c>0, n_0 \in \NN$ depending only on $\rho$ and not on $\Vec e$ such that $\log^+{|\Tr U_i|} \geq c (k-i)$ for all but at most $n_0$   of  the regions $\bu_i$.

 Hence for all except some uniformly bounded number of the regions $\bu_i$, $ \ell(U_i)   \geq c(k-i) -\log 2$. Since all axes $\Ax U_i$  intersect  $\E$ orthogonally in points $P_i$ say, it follows that 
  $d(\Ax U_1, \Ax U_k) $ is bounded above by the sum $\sum_1^{k-1} d(\Ax U_i, \Ax U_{i+1}) $ of the distances between the points $P_i, P_{i+1}$. Since $d(\Ax U_i, \Ax U_{i+1}) \leq O(e^{-\ell(U_i)}),   1 \leq i <k$, 
   the distance from $ \Ax U_1$  to one of the finitely many axes in $\Xi$ is uniformly bounded above, and we are done.
\end{proof}


\begin{thebibliography}{000}
%%%%%%%%%%%%%%%%%%%%%%%%%%%%%%%%%%%%%%%%%%%%%%% 
 
 
   \bibitem{BSeries}
J. Birman and C. Series.
\newblock  Geodesics with multiple self-intersections and 
symmetries on Riemann surfaces.
\newblock In {\em Low dimensional topology and Kleinian 
groups},   D. Epstein  ed., LMS Lecture Notes 112, Cambridge Univ. Press, 3 -- 12,   1986.

\newblock {\em Proc. London Math. Soc.  77}, 697--736, 1998.

   
 % \bibitem{BH}
%M.~Bridson and A. Haefliger.
%\newblock {\em Metric spaces of non-positive curvature.}
%\newblock Springer Grundlehren Vol. 319, 1999.
  
   \bibitem{bow_mar}
B.~H. Bowditch.
\newblock {M}arkoff triples and quasi-{F}uchsian groups.
\newblock {\em Proc. London Math. Soc.  77}, 697--736, 1998.




   \bibitem{CMZ}
M.~Cohen, W.~Metzler and A.~Zimmermann.
\newblock Enumerating palindromes and primitives in rank two free groups.
\newblock {\em Math. Annalen}, 257, 435-- 446, 1981.
 


 \bibitem{Fen}
W.~Fenchel.
\newblock {\em Elementary geometry in hyperbolic space.}
\newblock De Gruyter Studies in Mathematics,  Vol. 11, 1989.



   \bibitem{gilmankeen1}
J.~Gilman and L.~Keen.
\newblock Enumerating palindromes and primitives in rank two free groups.
\newblock {\em Journal of algebra}, 332, 1--13, 2011.

 \bibitem{gilmankeen2}
J.~Gilman and L.~Keen.
\newblock Discreteness criteria and the hyperbolic geometry of palindromes.
\newblock {\em Conformal geometry and dynamics}, 13, 76 --90, 2009.



\bibitem{goldman}
W. Goldman. 
\newblock The modular group action on real $SL(2)$-characters of a one-holed torus.
\newblock {\em Geometry and Topology}  7, 443 -- 486, 2003.


\bibitem{goldman2}
W. Goldman. 
\newblock Trace coordinates on Fricke spaces of some
simple hyperbolic surfaces.
\newblock In {\em Handbook of Teichm\"uller theory Vol. II},  IRMA Lect. Math. Theor. Phys., 13, Euro. Math. Soc., Z\"urich, 611-- 684, 2009.


 
\bibitem{ksriley}   L. Keen and C. Series. 
\newblock The Riley slice of Schottky space. 
\newblock {\em Proc. London Math. Soc.}, 69,  72 -- 90, 1994.



\bibitem{LX} J. Lee and B. Xu.
 \newblock Bowditch's Q-conditions and Minsky's primitive stability.
 \newblock {\em Trans. AMS}, 373,  1265 -- 1305, 2020.
 

 
 
\bibitem{lupi}   D. Lupi. 
\newblock Primitive stability and Bowditch conditions for rank 2 free group representations. 
\newblock   {\em Thesis, University of Warwick},  2016.



\bibitem{Minsky}   Y. Minsky. 
\newblock On dynamics of $Out(F_n)$ on $PSL(2,\CC)$ characters. 
\newblock {\em Israel Journal of Mathematics}, 193,  47 -- 70, 2013.


 
\bibitem{serwolp}
C.~Series.
\newblock  An extension of Wolpert's derivative formula.
\newblock {\em Pacific J. Math.}, 
197, 223 -- 239, 2001.

\bibitem{serPS}
C.~Series.
\newblock  Primitive stability and Bowditch's BQ-conditions are equivalent.
\newblock {\em arXiv:2530070 [math.GT]},  2019.


\bibitem{serInt}
C.~Series.
\newblock  The Geometry of Markoff Numbers. 
 \newblock {\em Math. Intelligencer},  7, 20 -- 29, 1985.
   
\bibitem{sty}
 C.~Series, S.P.~Tan, Y.~Yamashita.
\newblock The diagonal slice of Schottky space. 
\newblock {\em Algebraic and Geometric Topology}, 17, 2239 -- 2282,  2017.  



\bibitem{tan_gen}
S.P.  Tan, Y. L.  Wong  and Y. Zhang.
\newblock Generalized {M}arkoff maps and {M}c{S}hane's identity.
\newblock {\em Adv. Math.  217}, 761--813, 2008.

  


%%%%%%%%%%
\end{thebibliography}
\end{document}